\title{Tangent spaces on the trianguline variety at companion points}
\author{Seginus Mowlavi}
\address{Université Paris-Saclay, CNRS, Laboratoire de mathématiques d’Orsay, 91405, Orsay, France}
\begin{document}

\begin{abstract}
Many results about the geometry of the trianguline variety have been obtained by Breuil-Hellmann-Schraen. Among them, using geometric methods, they have computed a formula for the dimension of the tangent space of the trianguline variety at dominant crystalline generic points, which has a conjectural generalisation to companion (\emph{i.e.\ }non-dominant) points. In an earlier work, they proved a weaker form of this formula under the assumption of modularity using arithmetic methods.  We prove a generalisation of a result of Bellaïche-Chenevier in $p$-adic Hodge theory and use it to extend the arithmetic methods of Breuil-Hellmann-Schraen to a wide class of companion points.
\end{abstract}

\maketitle

\tableofcontents

\numberwithin{equation}{section}

\section{Introduction}

Let $p$ be a prime number. Inspired by Kisin's study \cite{kisin03} of $p$-adic overconvergent eigenforms, and building on the notion of trianguline representation from Colmez \cite{colmezTriangulines}, Hellmann \cite{hellmann} followed by Hellmann-Schraen \cite{hellmannSchraen} introduced a rigid analytic variety parametrising triangulations of framed deformations of a fixed residual representation $\bar{r}$, called the trianguline variety $\Xtri$. Hellmann used this construction to advance the study of eigenvarieties by Bellaïche-Chenevier \cite{bellaicheChenevier}. Breuil-Hellmann-Schraen focused on the geometry of the trianguline variety in a series of papers \cite{bhs1}, \cite{bhs2}, \cite{bhs3}, especially the local geometry at crystalline generic points. They derived a number of consequences, among which results concerning classicality of overconvergent forms, existence of certain eigenforms called companion eigenforms, and the existence of companion constituents (we do not focus on automorphic representations in this work, but this features prominently in the $p$-adic local Langlands program; see for instance \cite{breuilEmerton}, \cite{bergdall}, \cite{colmezDospinescu} and \cite{colmezDospinescuPaskunas}).

In particular, Breuil-Hellmann-Schraen prove an upper bound for the dimension of the tangent space of $\Xtri$ at a crystalline generic point, and conjecture that it is exact. They also show, for strictly dominant points (\emph{i.e.\ }``classical'' points), that this conjecture is implied by classical modularity lifting conjectures.

In this paper, we generalise a result of Bellaïche-Chenevier \cite{bellaicheChenevier} about refinements of crystalline representations and associated triangulations. This allows us to adapt the method of Breuil-Hellmann-Schraen \cite{bhs2} to prove their conjecture about the dimension of the tangent space of $\Xtri$, assuming the modularity lifting conjectures, at many points which are not classical (\emph{i.e.\ }``companion'' points).

To state our results, we briefly introduce the relevant objects. We fix an integer $n>0$ and a representation $\bar{r}\colon\G_K\to\GL_n(k_L)$ from the absolute Galois group of $K$ to the residual field of $L$, where $K$ and $L$ are finite extensions of $\Q_p$. We assume that $p\nmid2n$ and that $\Sigma\coloneqq\Hom_{\Q_p}(K,L)$ has $[K:\Q_p]$ elements.

Let $\Xtri$ be the trianguline variety: it is a rigid analytic variety over $L$ defined as the Zariski-closure of the set $\Utri$ of points $(r,\underline{\delta})$ where $r$ is a framed deformation of $\bar{r}$ and $\underline{\delta}$ is a locally $\Q_p$-analytic character of $(K^\times)^n$ such that $r$ is trianguline of parameter $\underline{\delta}$ in the sense of Colmez (see \emph{e.g.\ }\cite[Dfn.\ 3.3.8]{bhs3}). We fix a crystalline point $x=(r,\underline{\delta})\in\Xtri$, \emph{i.e.\ }a point such that $r$ is a crystalline representation.

We assume that $r$ is generic, which means that (i) for each $\tau\in\Sigma$, the $\tau$-Hodge-Tate weights of $r$ are all distinct, and (ii) $\varphi/\varphi'\notin\{1,p^{f(K/\Q_p)}\}$ for any two eigenvalues $\varphi,\varphi'$ of the linearised Frobenius $\Phi$ on $\Dcris(r)$ (with multiplicities), where $f(K/\Q_p)$ is the inertia degree. For each $\tau\in\Sigma$, we write the $\tau$-Hodge-Tate weights of $r$ in increasing order $h_{\tau,1}<\ldots<h_{\tau,n}$. Then $\underline{\delta}$ is determined by the data of (i) an ordering $\underline{\varphi}=(\varphi_1,\ldots,\varphi_n)$ of the eigenvalues of $\Phi$ and (ii) for each $\tau\in\Sigma$, a permutation $w_\tau$ in the symmetric group $\Scal_n$. More precisely, one can write $\underline{\delta}=z^{w(\hbold)}\nr(\underline{\varphi})$, where the character $\nr(\underline{\varphi})=(\nr(\varphi_i))_i$ of $(K^\times)^n$ is defined by $\nr(\varphi_i)|_{\intring_K^\times}=1$ and $\nr(\varphi_i)(\varpi)=\varphi_i$ for a uniformiser $\varpi$ of $K$, and where $w\coloneqq(w_\tau)\in(\Scal_n)^\Sigma$ and $w(\hbold)\coloneqq\big(h_{\tau,w_\tau^{-1}(i)}\big)_{\tau,i}\in(\Z^n)^\Sigma$. Furthermore, there is a unique $w_\sat\in(\Scal_n)^\Sigma$ such that the point $x_\sat\coloneqq\left(r,z^{w_\sat(\hbold)}\nr(\underline{\varphi})\right)$ lies in $\Utri$.

Writing $G\coloneqq\GL_{n,\Q_p}$, note that $(\Scal_n)^\Sigma$ is the Weyl group of the reductive group $\prod_{\tau\in\Sigma}G$ over $\Q_p$. Set $B\subseteq G$ to be the Borel subgroup of to upper triangular matrices; it determines a product Bruhat order $\preceq$ on $(\Scal_n)^\Sigma$. It is known that $w_\sat\preceq w$ \cite[Thm.\ 1.7]{bhs3}.

The main result of this paper is the following:

\begin{thm}[Theorem \ref{thmMain}] \label{thmIntroMain}
Assume Conjecture \ref{conjBreuilMezard}. Let $x\in\Xtri$ be crystalline generic, such that the associated pair $(w_\sat,w)$ in $(\Scal_n)^\Sigma$ is good in the sense of Definition \ref{dfnGoodPair}. Then
\begin{equation} \label{eqDimensionIntro}
	\dim_{k(x)}T_{\Xtri,x} = \dim\Xtri - d_{ww_\sat^{-1}} + \sum_{\tau\in\Sigma}\dim_{\Q_p}T_{\overline{(Bw_\tau B/B)},w_{\sat,\tau}B} - \length(w_\sat)
\end{equation}
where $k(x)$ is the residue field of $\Xtri$ at $x$, $\overline{Bw_\tau B/B}$ is the Schubert variety associated to $w_\tau$ in the flag variety $G/B$ and we refer to the paragraph preceding Definition \ref{dfnGoodPair} for the definition of $d_{ww_\sat^{-1}}$.
\end{thm}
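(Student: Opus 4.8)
The plan is to extend to companion points the arithmetic argument by which Breuil--Hellmann--Schraen \cite{bhs2} deduce the tangent-space dimension of $\Xtri$ from modularity. That argument passes through a patched eigenvariety, whose completed local ring at a crystalline point is read off from the patched module together with the Breuil--Mézard conjecture; the ingredient that must be supplied anew at a companion point is a description, on the Galois side, of the triangulations of $D_{\mathrm{rig}}(r)$ with the non-saturated parameter $z^{w(\hbold)}\nr(\underline\varphi)$ and of their deformations, keeping track of the fact that $\Xtri$ is defined by a \emph{Zariski closure} — which is exactly the generalisation of the result of Bellaïche--Chenevier \cite{bellaicheChenevier} established earlier in the paper. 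Throughout, the hypothesis that $(w_\sat,w)$ is a good pair (Definition \ref{dfnGoodPair}) serves as a transversality assumption that makes the final count come out exactly rather than merely as an inequality.

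On the local ($p$-adic Hodge theory) side, I would first pass from $D_{\mathrm{rig}}(r)$ to its $B_{\mathrm{dR}}$-realisation via the almost-de-Rham formalism of \cite{bhs3}, under which triangulations become filtrations stable under the attached nilpotent operator; for the parameter $z^{w(\hbold)}\nr(\underline\varphi)$ with $w_\sat\preceq w$, such a filtration sits, for each $\tau\in\Sigma$, in relative position $w_\tau$ with respect to the reference flag singled out by the saturated triangulation $x_\sat$. The generalisation of \cite{bellaicheChenevier}, valid now also for such critical refinements, identifies the $\Xtri$-deformations of this datum with those of a point of relative position $w_{\sat,\tau}$ inside the Schubert variety $\overline{Bw_\tau B/B}$ — the Schubert \emph{variety}, rather than its open cell, appearing precisely because of the closure in the definition of $\Xtri$ — that is, with $\widehat{\mathcal{O}}_{\overline{Bw_\tau B/B},\,w_{\sat,\tau}B}$. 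Combined with Kisin's theorem that the crystalline framed deformation ring of regular Hodge type is formally smooth over $\intring_L$, this predicts the shape of $\widehat{\mathcal{O}}_{\Xtri,x}$: up to a formally smooth factor, a quotient of the completed tensor product $\bigotimes_{\tau\in\Sigma}\widehat{\mathcal{O}}_{\overline{Bw_\tau B/B},\,w_{\sat,\tau}B}$ by the $\length(w_\sat)+d_{ww_\sat^{-1}}$ relations re-attaching the local triangulation to the ambient framed deformation, the good-pair hypothesis being what forces these relations to form a regular sequence.

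I would then realise this shape automorphically. Globalising $\bar r$ to a definite unitary group and patching completed cohomology as in \cite{bhs1} produces a patched eigenvariety $X_\infty$, a point $x_\infty$ above $x$, and a closed immersion $X_\infty\hookrightarrow Y$, where $Y$ is $\Xtri$ enlarged by a formally smooth factor $U$ of patching and extra weight directions of known dimension; the Galois-side analysis above, transported to the locally analytic representation carried by the patched module, controls the companion constituents at $x_\infty$. Granting Conjecture \ref{conjBreuilMezard}, the patched module is free of the expected rank over the patched crystalline deformation ring, hence has full support near $x_\infty$, so the closed immersion is an isomorphism on completed local rings there; thus $\widehat{\mathcal{O}}_{\Xtri,x}$, up to the formally smooth factor $U$, coincides with $\widehat{\mathcal{O}}_{X_\infty,x_\infty}$ and therefore with the local model of the previous step, with no superfluous relations. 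This is the step where, in contrast with \cite{bhs2}, one genuinely needs the critical-refinement version of \cite{bellaicheChenevier}.

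The dimension count is then formal: $\dim_{k(x)}T_{\Xtri,x}$ equals the dimension of the formally smooth factor plus $\sum_{\tau\in\Sigma}\dim_{\Q_p}T_{\overline{Bw_\tau B/B},\,w_{\sat,\tau}B}$ minus $\length(w_\sat)+d_{ww_\sat^{-1}}$, and substituting the known value of $\dim\Xtri$ at crystalline generic points \cite{bhs1} turns the first term into $\dim\Xtri$, yielding \eqref{eqDimensionIntro}. As a sanity check, when $w=w_\sat$ one has $d_e=0$, each $\overline{Bw_{\sat,\tau}B/B}$ is smooth at its open-cell point $w_{\sat,\tau}B$ with $\dim T=\length(w_{\sat,\tau})$, and the formula collapses to $\dim_{k(x)}T_{\Xtri,x}=\dim\Xtri$, recovering the known smoothness of $\Xtri$ at $x_\sat$. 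I expect the main obstacle to lie in the local side: at a critical refinement the non-saturated triangulations of $D_{\mathrm{rig}}(r)$ and the associated $B_{\mathrm{dR}}$-filtrations in arbitrary relative position must be analysed directly, the clean non-critical picture of \cite{bellaicheChenevier} no longer applying — this is the technical heart of the argument, and it is also what pins down the good-pair condition under which the matching step delivers \eqref{eqDimensionIntro} exactly, with no leftover tangent vectors.
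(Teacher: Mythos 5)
Your proposal diverges from the paper's actual proof in a fundamental way, and the divergence hides a genuine gap. The paper does not attempt to determine $\widehat{\mathcal{O}}_{\Xtri,x}$ as a quotient of $\bigotimes_\tau\widehat{\mathcal{O}}_{\overline{Bw_\tau B/B},w_{\sat,\tau}B}$ by a regular sequence; it proves the tangent-space formula as a two-sided bound. The upper bound is simply imported from \cite[Theorem 4.1.5(ii)]{bhs3}. The lower bound is obtained by exhibiting two subspaces $T_1,T_2\subseteq T_{\Xtri,x}$, namely $T_1=\mathrm{im}(\diff\iota_{\hbold,w})$ from the crystalline deformation space $\Wcrtildew{w}$, and $T_2=\mathrm{im}(\diff\jmathauto)$ from the twisted closure $\overline{U_{x_\sat}\times_{\Wcal^n_L}\Wcal^n_{w,w_\sat,\hbold,L}}$, and computing $\dim(T_1+T_2)=\dim T_1+\dim T_2-\dim(T_1\cap T_2)$. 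The generalisation of Bellaïche--Chenevier (Proposition \ref{propLikeBellaicheChenevier}\ref{itemFlagTriangulation}) enters exactly at the step of identifying $T_1\cap T_2$ with $T_{\Wcrtildew{w_\sat},(r,\underline{\varphi})}$, because a tangent vector in the fibre product is a first-order crystalline deformation $r_A$ with an auxiliary triangulation, and the proposition forces the Hodge flag of $r_A$ into the cell $\prod_\tau(Bw_{\sat,\tau}B/B)(A)$. Your sketch never isolates this intersection computation, which is the technical heart of the argument.

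The second, independent problem is your use of Conjecture \ref{conjBreuilMezard}. In this paper the conjecture asserts only that the rigid subvariety $\Xaut\subseteq\Xtrip$ cut out by the patched eigenvariety coincides with the union of \emph{crystalline} irreducible components; it is not a statement about the patched module being free of the expected rank, nor does it yield an isomorphism of completed local rings between $\Xrho$ and an enlarged $\Xtri$ near $x_\infty$. What the conjecture actually furnishes (via Hypothesis \ref{hypComesFromPatched}) is that a Zariski-open neighbourhood $U_{x_\sat}\subseteq\Utri$ of the saturated point lifts to $\Xrho$; then Proposition \ref{propClosedEmbedding} applies the companion-point Theorem \ref{thmStronglinkagePatched}: for each nearby point, the good-pair hypothesis lets one build a chain $w_\sat\prec w_1\prec\cdots\prec w$ in which each step is a reflection supported on an orbit of $ww_\sat^{-1}$, and this is precisely what is needed to verify the strong-linkage inequality $w_\sat\cdot\lambda\uparrow w\cdot\lambda$ for the relevant character $\lambda$. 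So the good-pair condition is a combinatorial hypothesis about strong linkage, not a transversality assumption making some relations a regular sequence — you have assigned it the wrong role, and the ``full support'' step as you describe it has no support in the paper's framework. In short, your proposal names the right ingredients (patched eigenvariety, companion points, the critical-refinement Bellaïche--Chenevier result, good pairs) but wires them together in a way that neither matches the paper nor, as written, closes the argument.
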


In the case where $w=w_0$ is the maximal element of $(\Scal_n)^\Sigma$ for the Bruhat order, \eqref{eqDimensionIntro} is proved in \cite{bhs3} (without assuming Conjecture \ref{conjBreuilMezard}). Our proof of Theorem \ref{thmIntroMain} is based on a method from \cite{bhs2}, where a weak version (also for $w=w_0$ only) is proved. This method consists in finding two subspaces $T_1$ and $T_2$ of $T_{\Xtri,x}$ and computing $\dim(T_1+T_2)=\dim{T_1}+\dim{T_2}-\dim(T_1\inter T_2)$ in the following way.

We first use the following construction due to \cite[\S2.2]{bhs2} and \cite{kisin08}. Denoting by $\hbold=(h_{\tau,1}<\ldots<h_{\tau,n})_\tau\in(\Z^n)^\Sigma$ the Hodge-Tate weights of $r$, let $\Wcrtilde$ be the reduced rigid analytic space over $L$ parametrising the data of a framed deformation $r'$ of $\bar{r}$ which is crystalline generic with Hodge-Tate weights $\hbold$, together with a refinement $\underline{\varphi'}=(\varphi'_1,\ldots,\varphi'_n)$ of $r'$ (\emph{i.e.\ }an ordering of the eigenvalues of the linearised Frobenius $\Phi'$ of $r'$). There is a smooth map of $L$-rigid analytic spaces $h\colon\Wcrtilde\to\prod_{\tau\in\Sigma}(G/B)^\rig$ which sends a point $(r',\underline{\varphi'})$ of $\Wcrtilde$ to the Hodge filtration seen as a flag on $\DdR(r')$ relative to the basis induced by the refinement $\underline{\varphi'}$. Recall the Schubert decomposition of the flag variety $\prod_{\tau\in\Sigma}G/B=\coprod_{w'\in(\Scal_n)^\Sigma}\prod_{\tau\in\Sigma}Bw'_\tau B/B$. For $w'\in(\Scal_n)^\Sigma$, we consider the closed rigid subspace $\Wcrtildew{w'}$ of $\Wcrtilde$ defined by $\Wcrtildew{w'}\coloneqq h^{-1}\big(\prod_{\tau\in\Sigma}\overline{Bw'_\tau B/B}^\rig\big)$. There is a closed immersion of rigid analytic spaces $\iota_{\hbold,w'}\colon\Wcrtildew{w'}\to\Xtri$ defined by $\iota_{\hbold,w'}(r',\underline{\varphi'})=\left(r',z^{w'(\hbold)}\nr(\underline{\varphi'})\right)$. We have $(r,\underline{\varphi})\in\Wcrtildew{w}$ and $\iota_{\hbold,w}(r,\underline{\varphi})=x$; we then set $T_1$ to be the image of the injection $T_{\Wcrtildew{w},(r,\underline{\varphi})}\inj T_{\Xtri,x}$ induced by $\iota_{\hbold,w}$.

We next define a rigid automorphism $\jmathauto\colon(r',\underline{\delta'})\longmapsto\left(r',z^{w(\hbold)-w_\sat(\hbold)}\underline{\delta'}\right)$ which maps $x_\sat$ to $x$, and we want $\jmathauto$ to map a locally closed subvariety of $\Utri$ containing $x_\sat$ into $\Xtri$. We use the weight morphism $\omega\colon\Xtri\to\Wcal_L^n$ defined by $\omega(r',\underline{\delta'})=\underline{\delta'}|_{(\intring_K^\times)^n}$, where $\Wcal_L$ is the rigid analytic space over $L$ parametrising continuous characters of $\intring_K^\times$, and cut out a closed subspace of characters $\Wcal^n_{w,w_\sat,\hbold,L}\subset\Wcal_L^n$ with the equations $\wt_\tau(\eta_{w_{\sat,\tau}(i)}\eta_{w_\tau(i)}^{-1})=h_{\tau,i}-h_{\tau,w_{\sat,\tau}^{-1}w_\tau(i)}$. Then, using the patched eigenvariety $\Xrho$ of Breuil-Hellmann-Schraen \cite[\S3]{bhs1} (where $\bar{\rho}$ is a globalisation of $\bar{r}$ given by a result of Emerton-Gee \cite[Theorem 1.2.3]{emertonGee19}) and assuming Conjecture \ref{conjBreuilMezard}, we show that if $(w_\sat,w)$ is a good pair of $(\Scal_n)^\Sigma$, then there exists an open neighborhood $U_{x_\sat}$ of $x_\sat$ in $\Utri$ such that $\jmathauto$ induces a Zariski-closed embedding of reduced rigid $L$-analytic spaces $\jmathauto\colon\overline{U_{x_\sat}\times_{\Wcal^n_L}\Wcal^n_{w,w_\sat,\hbold,L}}\inj\Xtri$. We then set $T_2$ to be the image of the injection $T_{U_{x_\sat}\times_{\Wcal^n_L}\Wcal^n_{w,w_\sat,\hbold,L},x_\sat}\inj T_{\Xtri,x}$ induced by $\jmathauto$.

It is quite straightforward to compute $\dim{T_2}=\dim{\Xtri}-d_{ww_\sat^{-1}}$. The computation of $\dim{T_1}$ and $\dim(T_1\inter T_2)$ is where our approach differs from that of \cite{bhs2}. We prove, using Berger's dictionary between cristalline $(\varphi,\Gamma_K)$-modules over the Robba ring and filtered $\varphi$-modules (\cite[Théorème A]{berger08}, \cite[Théorème 3.6]{berger02}), the following generalisation of \cite[Prop.\ 2.4.1]{bellaicheChenevier}.

\begin{prop}[Proposition \ref{propLikeBellaicheChenevier}] \label{propIntroLikeBellaicheChenevier}
Let $A$ be a local artinian $L$-algebra with residue field $L$. Let $r_A\colon\G_K\to\GL_n(A)$ be a regular crystalline representation of rank $n\in\Z_{>0}$ over $A$, with Hodge-Tate weights $\hbold\in(\Z^n)^\Sigma$. Assume that there exist $\kbold\in\Z^\Sigma$ and $\underline{\varphi}_A\in(A^\times)^n$, with the $\varphi_{A,i}$ having pairwise distinct reductions in $L$, such that $r_A$ is trianguline of parameter $z^\kbold\nr(\underline{\varphi}_A)$. Then there exists $w_A\in(\Scal_n)^\Sigma$ such that $\kbold=w_A(\hbold)$, and the linearised Frobenius $\Phi_A$ on $\Dcris(r_A)$ admits $\underline{\varphi}_A$ as a set of eigenvalues. Furthermore, the flag $\Fil_\bullet\DdR(r_A)$ relative to the basis of $\DdR(r_A)$ induced by the refinement $\underline{\varphi}_A$, seen as a point of $\prod_{\tau\in\Sigma}(G/B)(A)$, lies in the cell $\prod_{\tau\in\Sigma}(Bw_{A,\tau}B/B)(A)$.
\end{prop}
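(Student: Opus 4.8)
The plan is to transport the statement, via Berger's dictionary, into a computation with the filtered $\varphi$-module $\Dcris(r_A)$, working throughout with finite free $A$-modules in place of $L$-vector spaces. Let $K_0\subseteq K$ be the maximal unramified subextension of $K/\Q_p$, so that $A\otimes_{\Q_p}K_0$ and $A\otimes_{\Q_p}K$ are products of copies of $A$ indexed by the embeddings $K_0\inj L$, resp.\ by $\Sigma$, and ``$\tau$-components'' make sense. Since $r_A$ is crystalline, the $(\varphi,\Gamma_K)$-module $D_{\mathrm{rig}}(r_A)$ over the Robba ring $\mathcal{R}_A$ is crystalline; by Berger's equivalence it arises from a filtered $\varphi$-module over $A$, so $\Dcris(r_A)$ is free of rank $n$ over $A\otimes_{\Q_p}K_0$, $\DdR(r_A)$ is free of rank $n$ over $A\otimes_{\Q_p}K$, and regularity means that the Hodge filtration on $\DdR(r_A)$ is, in each $\tau$-component, a full flag of $A$-direct summands with rank-one graded pieces and jumps $h_{\tau,1}<\dots<h_{\tau,n}$. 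Write $\delta_i\coloneqq z^{k_i}\nr(\varphi_{A,i})$, so that $z^\kbold\nr(\underline{\varphi}_A)=(\delta_1,\dots,\delta_n)$; the explicit description of rank-one $(\varphi,\Gamma_K)$-modules over $\mathcal{R}_A$ shows that $\mathcal{R}_A(\delta_i)$ is crystalline, with $\Dcris(\mathcal{R}_A(\delta_i))$ free of rank one over $A\otimes_{\Q_p}K_0$, linearised Frobenius acting by $\varphi_{A,i}$, and $\tau$-Hodge-Tate weight $k_{i,\tau}$. Finally, $r_A$ viewed as an $L$-linear representation of $\G_K$ is crystalline, hence every sub-$(\varphi,\Gamma_K)$-module of $D_{\mathrm{rig}}(r_A)$ --- in particular each step $\mathrm{Fil}_i^{\mathrm{tri}}$ of the triangulation (whose graded pieces are the $\mathcal{R}_A(\delta_i)$) --- is crystalline; consequently $\Dcris$ and $\DdR$ are exact on the triangulation filtration and carry it to filtrations of $\Dcris(r_A)$, resp.\ $\DdR(r_A)$, by $A$-direct summands that are strictly compatible with Frobenius, resp.\ the Hodge filtration. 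This artinian upgrade of the field-case inputs of Berger and \cite{bellaicheChenevier} is the step I expect to require the most care, although it involves no new idea.

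Granting this, claims (1) and (2) follow at once. Applying $\DdR$ to the triangulation gives a flag of $\DdR(r_A)$ whose $i$-th graded piece is $\DdR(\mathcal{R}_A(\delta_i))$, free of rank one with $\tau$-jump $k_{i,\tau}$; additivity of Hodge-Tate weights along this flag forces, for each $\tau$, the $(k_{i,\tau})_i$ to be pairwise distinct and to exhaust $(h_{\tau,j})_j$ --- otherwise some weight would occur with the wrong multiplicity --- which is precisely the statement that $\kbold=w_A(\hbold)$ for a unique $w_A=(w_{A,\tau})_\tau\in(\Scal_n)^\Sigma$. Likewise, applying $\Dcris$ gives a $\Phi_A$-stable flag of $\Dcris(r_A)$ with rank-one graded pieces on which the linearised Frobenius acts by $\varphi_{A,1},\dots,\varphi_{A,n}$, so the characteristic polynomial of $\Phi_A$ equals $\prod_i(X-\varphi_{A,i})$; this is the meaning of ``$\Phi_A$ admits $\underline{\varphi}_A$ as a set of eigenvalues''.

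For claim (3), I would first exploit that the $\varphi_{A,i}$ are pairwise distinct modulo $\mathfrak{m}_A$: then the characteristic polynomial of $\Phi_A$ factors into pairwise comaximal linear factors, yielding a canonical $\Phi_A$-stable decomposition $\Dcris(r_A)=\bigoplus_i D_i$ into free rank-one pieces, with $\bigoplus_{i\le j}D_i$ recovering $\Dcris(\mathrm{Fil}_j^{\mathrm{tri}})$. Choosing generators of the $D_i$, extending scalars to $K$ and splitting into $\tau$-components produces the ordered basis of $\DdR(r_A)$ induced by the refinement; fix $\tau$ and let $\mathcal{G}_\bullet$ be its standard flag (equivalently $\DdR$ of the triangulation, in the $\tau$-component) and $\mathcal{F}^\bullet$ the Hodge flag. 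As these are full flags of $A$-direct summands, it suffices to show that for all $h$ and $j$ the intersection $\mathcal{F}^h\cap\mathcal{G}_j$ is an $A$-direct summand of rank $\#\{i\le j:k_{i,\tau}\ge h\}$: the $A$-point of $\prod_\tau G/B$ defined by the Hodge flag then lies in the locally closed subscheme obtained by requiring these intersections to have the prescribed ranks and to be summands, which is $\prod_\tau Bw_{A,\tau}B/B$ --- over the residue field, that this intersection data characterises the Bruhat cell of $w_{A,\tau}$ is the computation of \cite[Prop.\ 2.4.1]{bellaicheChenevier}, and it is purely combinatorial in $w_A$. I would establish the rank statement by induction on $j$: from the strictly filtered short exact sequence $0\to\mathcal{G}_{j-1}\to\mathcal{G}_j\to\DdR(\mathcal{R}_A(\delta_j))_\tau\to0$ one obtains an exact sequence $0\to\mathcal{F}^h\cap\mathcal{G}_{j-1}\to\mathcal{F}^h\cap\mathcal{G}_j\to\mathrm{Fil}^h\DdR(\mathcal{R}_A(\delta_j))_\tau\to0$, whose third term is all of $\DdR(\mathcal{R}_A(\delta_j))_\tau$ if $h\le k_{j,\tau}$ and $0$ otherwise (a single jump at $k_{j,\tau}$); since over the artinian local ring $A$ an extension of a free module by a free module is free, this shows that both $\mathcal{F}^h\cap\mathcal{G}_j$ and, by the same argument applied to the quotients, $\mathcal{G}_j/(\mathcal{F}^h\cap\mathcal{G}_j)$ are free, so $\mathcal{F}^h\cap\mathcal{G}_j$ is a summand, of the claimed rank. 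This completes the argument.
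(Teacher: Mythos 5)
Your overall strategy coincides with the paper's: you transport the triangulation through Berger's dictionary, use exactness of $\Dcalcris$ and $\DcaldR$ on crystalline $(\varphi,\Gamma_K)$-modules to obtain a Frobenius-stable flag on $\Dcris(r_A)$ and a flag on $\DdR(r_A)$ that is strictly compatible with the Hodge filtration, and read off claims (1) and (2) from the rank-one graded pieces. The intersection computation you sketch for claim (3), via the strictly compatible short exact sequences coming from the triangulation, is also essentially the same as the one in the paper's proof of Proposition~\ref{propLikeBellaicheChenevier}.

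The gap is in the final step, where you assert that the $A$-point of $\prod_{\tau}G/B$ defined by the Hodge flag lies in $\prod_{\tau}(Bw_{A,\tau}B/B)(A)$ because each intersection with the refinement flag is a free direct summand of the prescribed Bruhat rank, and you dismiss the needed identification as ``purely combinatorial in $w_A$'', citing only the residue-field case of Bellaïche--Chenevier. Over a field this is indeed classical, but over the local artinian ring $A$ it is not a formality: the subfunctor of $G/B$ obtained by imposing that the intersections be free direct summands of given ranks is not \emph{a priori} a locally closed subscheme, and even granting that it is, identifying it with the Schubert cell over $A$ requires an argument. The paper isolates precisely this missing step as Lemma~\ref{lemNonPathologicalFlag}, whose nontrivial content is the converse implication: from the freeness of the $n$ subquotients $(\Fcal_j\inter\Fil_{w(j)})/(\Fcal_j\inter\Fil_{w(j)-1})$ one must produce a compatible matrix representative of the flag, and the paper's proof proceeds by an induction using an ideal argument in $A$, not a computation in $w$. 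Your hypothesis (all intersections are free direct summands of the correct rank) does imply the hypothesis of that lemma, so your route would close once such a lemma is supplied; but as written you have replaced the one step of the argument that genuinely requires new input beyond the residue field with an unsupported appeal to the field case, while flagging as delicate only the (comparatively routine) artinian upgrade of Berger's functors.
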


Applied to the algebra $A=k(x)[\varepsilon]/(\varepsilon^2)$ of infinitesimal numbers over the local residue field of $x$, Proposition \ref{propIntroLikeBellaicheChenevier} shows that the commutative diagram
\[
	\begin{tikzcd}
	T_{\Wcrtildew{w_\sat},(r,\underline{\varphi})} \arrow[r,hook,"\diff\iota_{\hbold,w_\sat}"] \arrow[d,hook,"\subseteq"] & T_{\overline{U_{x_\sat}\times_{\Wcal^n_L}\Wcal^n_{w,w_\sat,\hbold,L}},x_\sat} \arrow[d,hook,"\diff\jmathauto"] \\
	T_{\Wcrtildew{w},(r,\underline{\varphi})} \arrow[r,hook,"\diff\iota_{\hbold,w}"] & T_{\Xtri,x} \,.
\end{tikzcd}
\]
is a cartesian square. This implies that $\dim(T_1\inter T_2)=\dim{T_{\Wcrtildew{w_\sat},(r,\underline{\varphi})}}$, from which we finally get \eqref{eqDimensionIntro} without difficulty.

\subsubsection*{Notation} We conclude this introduction with the main notation of the paper.

We fix a prime number $p$ and a finite extension $K$ of $\Q_p$, equipped with the $p$-adic norm $\abs{\cdot}_K$ normalised so that $\abs{p}_K=p^{-[K:\Q_p]}$. We write $\G_K$ for the absolute Galois group of $K$ and $\Gamma_K\coloneqq\Gal(K(\mu_{p^\infty})/K)$. We also write $K_0$ for the maximal unramified extension of $\Q_p$ inside $K$ and $\varphi$ for the Frobenius endomorphism of $K_0$, normalised so that it induces $x\longmapsto x^p$ on the residue field. If $L$ is another finite extension of $\Q_p$, we say that $L$ splits $K$ when the set $\Hom_{\Q_p}(K,L)$ of homomorphisms of fields over $\Q_p$ has cardinality $[K:\Q_p]$; in that case we write $\Sigma\coloneqq\Hom_{\Q_p}(K,L)$.

We write $\ArQ$ for the category of local artinian $\Q_p$-algebras whose residue field is a finite extension of $\Q_p$ (\emph{i.e.\ }finite-dimensional local $\Q_p$-algebras). For $A\in\ArQ$, we write $\m_A$ for its maximal ideal.

For a ring $R$, we write $\Filcat_R$ for the category of filtered $R$-modules, where objects are $R$-modules equipped with a decreasing exhaustive separated filtration by $R$-submodules, and morphisms are compatible families of $R$-linear maps in each degree. For $A\in\ArQ$, we write $\Rep_A(\G_K)$ for the category of representations of $\G_K$ over $A$, where objects are free $A$-modules of finite type equipped with a continuous linear action of $\G_K$, and morphisms are $\G_K$-equivariant $A$-linear maps. We use the usual functors of Fontaine $V\mapsto\left(\DdR(V),\Fil^\bullet\DdR(V)\right)$ from $\Rep_A(\G_K)$ to $\Filcat_{K\tens_{\Q_p}A}$ and $V\mapsto(\Dcris(V),\varphi)$ from $\Rep_A(\G_K)$ to the category of $K_0$-vector spaces equipped with a $\varphi$-semilinear endomorphism (called Frobenius and still noted $\varphi$). For $V\in\Rep_A(\G_K)$, we write $\Phi\coloneqq\varphi^{[K_0:\Q_p]}$ for the linearised Frobenius, which is a linear endomorphism of $\Dcris(V)$ (note that depending on the context, $\Phi$ can also denote a root system).

If $A\in\ArQ$ is an algebra over its residue field $L\coloneqq A/\m_AA$ and $L$ splits $K$, for $V\in\Rep_A(\G_K)$ which is de Rham and $\tau\in\Sigma$, we call \emph{$\tau$-Hodge-Tate weights} the integers $h_\tau$ such that $\rk_A\left(\Fil^{-h_\tau}\DdR(V)/\Fil^{-h_\tau+1}\DdR(V)\right)\neq0$. We count them with multiplicities, so that there are exactly $\rk_A(V)$ $\tau$-Hodge-Tate weights. We say that $V$ is \emph{regular} when the $\tau$-Hodge-Tate weights are distinct for each $\tau\in\Sigma$, \emph{i.e.\ }all $\tau$-Hodge-Tate weights have multiplicity 1.

With the same hypotheses on $A$, given $\kbold=(k_\tau)_{\tau\in\Sigma}\in\Z^\Sigma$, we write $z^\kbold$ for the character $K^\times\rightarrow A^\times,\,z\mapsto\prod_{\tau\in\Sigma}\tau(z)^{k_\tau}$. For $a\in A^\times$, we write $\nr(a)$ for the character $K^\times\rightarrow A^\times,\,z\mapsto a^{v_K(z)}$ where $v_K$ is the valuation on $K$ normalised to have image $\Z$ (for instance, $\nr(p^{[K_0:\Q_p]})(p)=p^{[K:\Q_p]}$; note that $v_K$ is not the valuation of the norm $\abs{\cdot}_K$). If $\delta\colon K^\times\to A^\times$ is any continuous character, then $\delta$ is locally $\Q_p$-analytic so it is a morphism of $p$-adic Lie groups; we write $\diff\delta\colon K\to A$ for its differential, which is a morphism of vector spaces over $\Q_p$. Note that the $\tau\colon K\inj L$ composed with $L\to A$, for $\tau\in\Sigma$, form a basis of $\Hom_{\Q_p\text{-vect. sp.}}(K,A)$ over $A$, which induces $\Hom_{\Q_p\text{-vect.\ sp.}}(K,A)\iso A^\Sigma$; under this isomorphism, we write $\wt_\tau(\delta)\in A$ for the weights of $\delta$ defined by $\diff\delta=(\wt_\tau(\delta))_{\tau\in\Sigma}$. If $\delta'$ is a continuous character $\intring_K^\times\to A^\times$, we write $\wt_\tau(\delta')\coloneqq\wt_\tau(\delta)$ where $\delta\colon K^\times\to A^\times$ is any character such that $\delta|_{\intring_K^\times}=\delta'$.

For any $A\in\ArQ$, we write $\Robba_{A,K}$ for the relative Robba ring over $A$ for $K$ (see \cite[Definition 2.2.2]{kpx}, where it is written $\Robba_A(\pi_K)$), which is equipped with a Frobenius endomorphism written $\varphi$ and a $\Gamma_K$-action. We write $\mathbf{Mod}^{\varphi,\Gamma_K}_A$ for the category of $(\varphi,\Gamma_K)$-modules over $A$; its objects are $\Robba_{A,K}$-modules $D$ which are finitely-generated over $\Robba_{A,K}$ and free over $\Robba_K$, equipped with semilinear actions of $\varphi$ and $\Gamma_K$ which commute and are continuous (as actions of free $\Robba_K$-modules), and such that $\Robba_K\varphi(D)=D$; its morphisms are $\Robba_{A,K}$-linear maps which commute with the respective actions of $\varphi$ and $\Gamma_K$. We use the usual functor $\Drig$ from $\Rep_A(\G_K)$ to $\mathbf{Mod}^{\varphi,\Gamma_K}_A$ (see \cite[Theorem 2.2.17]{kpx}). To a continuous character $\delta\colon\Q_p^\times\to A^\times$, we associate a $(\varphi,\Gamma_K)$-module $\Robba_{A,K}(\delta)$ of rank one over $\Robba_{A,K}$, defined in \cite[Construction 6.2.4]{kpx}.

\subsubsection*{Acknowledgements}

The results of this paper are part of the author's PhD work at the Laboratoire de Mathématiques d'Orsay. The author would like to thank his PhD advisor Christophe Breuil for his enlightening supervision, Ariane Mézard and Benjamin Schraen for valuable exchanges, as well as Gabriel Dospinescu and Ruochuan Liu for their careful review of the author's PhD dissertation. Finally, the author's debt to the work of Breuil-Hellmann-Schraen will be obvious to the reader.

\numberwithin{thm}{subsection}

\section{Crystalline trianguline representations}

We recall the notion of refinement of a crystalline representation $V$ over $A\in\ArQ$ in relation with that of triangulation of the associated $(\varphi,\Gamma)$-module. For $V$ regular, we characterise the refinement arising from a triangulation, relatively to the Hodge filtration, as a point of a Schubert cell of the flag variety of $\DdR(V)$.

\subsection{Refinements of a crystalline representation $V$ and flags on $\mathbf{D}_\mathrm{dR}(V)$}

We begin with a lemma about the functor of points of a Schubert cell of the flag variety.

\begin{dfn} \label{dfnFlagModules}
Let $R$ be a ring and let $M$ be a free $R$-module. A \emph{complete flag} on $M$ is an increasing sequence $\Fil_\bullet M$ of $R$-submodules of $M$
\[
	\Fil_0M\coloneqq\{0\}\subset\ldots\subset \Fil_iM\subset\ldots\subset \Fil_nM\coloneqq M
\]
with $n=\rk_R(M)$, such that $\Fil_iM/\Fil_{i-1}M$ is free of rank $1$ over $R$ for all $1\leq i\leq n$.
\end{dfn}

Fix an integer $n\in\Z_{>0}$, a ring $R$ and a free module $M$ of rank $n$ over $R$. Equip $M$ with a complete flag $\Fil_\bullet M=(\Fil_i(M))_{0\leq i\leq n}$, and fix a basis basis $(e_i)_{1\leq i\leq n}$ of $\Fil_\bullet M$, \emph{i.e.\ }a family of elements of $M$ such that $e_i\in\Fil_iM$ and $e_i\,\mathrm{mod}\,\Fil_{i-1}M$ generates $\Fil_iM/\Fil_{i-1}M$ for all $1\leq i\leq n$. Let $G\coloneqq\GL_{n,\Q_p}$ be the general linear group over $\Q_p$ and let $B$ be its Borel subgroup of upper triangular matrices; the Weyl group of $G$ is the group $\Scal_n$ of bijections of $\{1,\ldots,n\}$ to itself.

The map	$G(R)\to\{\text{complete flags of }M\}$ which sends a matrix $(a_{ij})_{1\leq i,j\leq n}\in G(R)$ to the flag $\left(\bigoplus_{j\leq k}Rb_j\right)_{1\leq k\leq n}$, where $b_j=\sum_{1\leq i\leq n}a_{ij}e_j$, factors through $B(R)$ and induces an bijection $G(R)/B(R)\isoto\{\text{complete flags of }M\}$, hence an injection
\begin{equation} \label{eqFlagsApoints}
	\{\text{complete flags of }M\} \inj (G/B)(R)
\end{equation}
(see \cite[\S5.4, \S5.5]{jantzen}). We call the image under \eqref{eqFlagsApoints} of a given flag the \emph{position of the flag relative to $\Fil_\bullet M$}.

Recall the Schubert decomposition on the flag variety $G/B$. We embed $\Scal_n$ in $G$ by mapping each $w\in\Scal_n$ to the matrix $\dot{w}=(w_{ij})_{1\leq i,j\leq n}$ defined by $w_{ij}=1$ if $i=w(j)$ and $w_{ij}=0$ if $i\neq w(j)$. Let $w\in\Scal_n$. The left-action of $B$ on $G$ by multiplication induces an action on $G/B$; write $BwB/B$ for the orbit of $\dot{w}B/B$ under this action. Then $BwB/B$ is a smooth locally closed subvariety of $G/B$ called a \emph{Schubert cell}, and we have the decomposition
\begin{equation} \label{eqStratification}
	G/B=\coprod_{w\in W}BwB/B
\end{equation}
(see \cite[Theorem 21.73]{milne}). The cells $BwB/B$ are stable under the left action of $B$, thus their Zariski closure $\overline{BwB/B}$ too, and this action is transitive on $BwB/B$. Thus, a closed cell $\overline{BwB/B}$ intersects an open cell $Bw'B/B$ if and only if $Bw'B\subseteq\overline{BwB/B}$, which happens if and only if $w'\preceq w$ for the Bruhat order (see \cite[Theorem 2.11]{schubertBGG}). Therefore \eqref{eqStratification} gives a good stratification of $G/B$, and we have a more general decomposition into locally closed strata
\[
	\overline{BwB/B}=\coprod_{w'\preceq w}Bw'B/B
\]
for all $w\in W$.

For flags, Schubert cells are elementarily described as such: given a flag $\Fcal$ of $M$, its position relative to $\Fil_\bullet M$ is in the closed cell $(\overline{BwB/B})(R)$ if and only if $\Fcal$ can be represented by a matrix $(a_{ij})_{1\leq i,j\leq n}\in G(R)$ such that $a_{ij}=0$ for all $i>w(j)$; and it is in the open cell $(BwB/B)(R)$ if and only if we can impose the additional condition $a_{w(j),j}\in R^\times$ for all $j$. (Thus we see that the Schubert decomposition induces a partition of the $R$-points if and only if $R$ is a field).

\begin{lem} \label{lemNonPathologicalFlag}
Let $R$ be a ring, let $M$ be a free $R$-module of rank $n\in\Z_{>0}$ equipped with a complete flag $\Fil_\bullet M$. Let $x\in(G/B)(R)$ be a point corresponding to a flag $(\Fcal_j)_{1\leq j\leq n}$ of $M$ under \eqref{eqFlagsApoints}. Then $x\in(BwB/B)(R)$ for some $w\in\Scal_n$ if and only if $(\Fcal_j\inter\Fil_{w(j)})/(\Fcal_j\inter\Fil_{w(j)-1})$ is free of rank $1$ over $R$ for all $1\leq j\leq n$.
\end{lem}

\begin{proof}
Let $(e_i)_{1\leq i\leq n}$ be a basis of $\Fil_\bullet M$.

If $x\in(BwB/B)(R)$, then there is a basis $(b_j)_{1\leq j\leq n}$ of $\Fcal_\bullet$ such that each $b_j$ can be written $b_j=\sum_{i\leq w(j)}r_{ij}e_i$ with $r_{w(j),j}\in R^\times$. Then, for all $1\leq j\leq n$, the injective morphism
\[
	(\Fcal_j\inter\Fil_{w(j)})/(\Fcal_j\inter\Fil_{w(j)-1}) \inj \Fil_{w(j)}/\Fil_{w(j)-1}
\]
sends $r_{w(j),j}^{-1}b_j\,\mathrm{mod}\,(\Fcal_j\inter\Fil_{w(j)-1})$ to $e_{w(j)}\,\mathrm{mod}\,\Fil_{w(j)-1}$, so it is an isomorphism.

Conversely, assume that $(\Fcal_j\inter\Fil_{w(j)})/(\Fcal_j\inter\Fil_{w(j)-1})$ is free of rank $1$ over $R$ for all $1\leq j\leq n$; it has a basis element that lifts to some $b_j\in\Fcal_j\inter\Fil_{w(j)}$. Then each $b_j$ can be written $b_j=\sum_{i\leq w(j)}r_{ij}e_i$ in $\Fcal_j$ with $r_{w(j),j}\in R^\times$. Therefore it is enough to prove that $(b_j)_{1\leq j\leq n}$ forms a basis of $\Fcal_\bullet$, \emph{i.e.\ }that $b_j\,\mathrm{mod}\,\Fcal_{j-1}$ is a basis of $\Fcal_j/\Fcal_{j-1}$ for all $1\leq j\leq n$. We proceed by induction on $j$. Let $b=\sum_{1\leq i\leq n}r_ie_i\in\Fcal_j$ such that $b\,\mathrm{mod}\,\Fcal_{j-1}$ is a basis of $\Fcal_j/\Fcal_{j-1}$. As $b_j\in\Fcal_j$, one can write
\begin{equation} \label{eqFlagBasis}
	b_j = \sum_{k<j}\lambda_kb_k + \lambda b
\end{equation}
for some $\lambda_k,\lambda\in R$. Let $I$ be the ideal of $R$ generated by $\lambda$. If $\lambda_k\notin I$ for some $k<j$, take such a $\lambda_k$ with $w(k)$ maximal. Then taking the coordinate along $e_{w(k)}$ of \eqref{eqFlagBasis} gives
\[
	r_{w(k),j} = \lambda_kr_{w(k),k} + \sum_{k'<j\text{ s.t.\ }w(k')>w(k)}\lambda_{k'}r_{w(k),k'} + \lambda r_{w(k)} \notin I
\]
since $r_{w(k),k}\in R^\times$. In particular $r_{w(k),j}\neq 0$, so $w(k)\leq w(j)$. This proves that $\lambda_k\in I$ for all $k<j$ such that $w(k)>w(j)$. Then \eqref{eqFlagBasis} along $e_{w(j)}$ gives
\[
	r_{w(j),j} = \sum_{k<j\text{ s.t.\ }w(k)>w(j)}\lambda_{k}r_{w(j),k} + \lambda r_{w(j)} \in I \,.
\]
As $r_{w(j),j}\in R^\times$, we deduce $I=R$. Hence $\lambda\in R^\times$, so $b_j\,\mathrm{mod}\,\Fcal_{j-1}$ is a basis of $\Fcal_j/\Fcal_{j-1}$; this concludes the proof.
\end{proof}

We are mainly concerned with the situation of $R=K\otimes_{\Q_p}A$ and $M=\DdR(V)$, where we fix an algebra $A\in\ArQ$ and a de Rham representation $V\in\Rep_A(\G_K)$. More specifically, we make the following two assumptions: (i) $A$ is an algebra over a finite extension $L$ of $\Q_p$ which is the residue field of $A$ and which splits $K$, and (ii) $V$ is regular.

Then we define a complete flag $\Fil_\bullet\DdR(V)$ on the free $(K\tens_{\Q_p}A)$-module $\DdR(V)$ by $\Fil_0\DdR(V)\coloneqq0$ and
\[
	\Fil_i\DdR(V) \coloneqq \bigoplus_{\tau\in\Sigma}\Fil^{-h_{\tau,i}}\DdR(V)_\tau ,\quad 1\leq i\leq n
\]
where $n=\rk_A(V)$ and $h_{\tau,1}<\ldots<h_{\tau,n}$ are the $\tau$-Hodge-Tate weights of $V$.

Next, we further assume that $V$ is crystalline. Recall the following definition from \cite[\S2.4]{bellaicheChenevier}.

\begin{dfn} \label{dfnRefinement}
Let $A\in\ArQ$, let $V\in\Rep_A(\G_K)$ be any crystalline representation. A \emph{refinement} of $V$ is a complete flag on $\Dcris(V)$ which is stable by the linearised Frobenius $\Phi$.
\end{dfn}

A refinement $\Fcal_\bullet$ of $V$ induces a complete flag $K\tens_{K_0}\Fcal_\bullet$ on $\DdR(V)$, and the position of $\Fil_\bullet\DdR(V)$ relative to $K\tens_{K_0}\Fcal_\bullet$ determines an $A$-point of
\[
	\left(\Res_{K/\Q_p}(G/B)_K\right)\times_{\Q_p}L \iso \prod_{\tau\in\Sigma}(G/B)_L
\]
where $G=\GL_{n,\Q_p}$ and $B$ is the Borel subgroup of upper triangular matrices.

\begin{rem} \label{rmkRefinementToEigenvalues}
A refinement of $V$ determines an ordered sequence of eigenvalues of $\Phi$, \emph{i.e.\ }a sequence $(\varphi_1,\ldots,\varphi_n)\in(K_0\tens_{\Q_p}A)^n$ such that the characteristic polynomial of $\Phi$ can be written $P_\Phi(\lambda)=\prod_{i=1}^n(\lambda-\varphi_i)\in(K_0\tens_{\Q_p}A)[\lambda]$. The following proposition is a partial converse of this fact.
\end{rem}

\begin{prop} \label{propEigenvaluesToRefinement}
Let $\varphi_1,\ldots,\varphi_n\in A$ be such that $(1\otimes\varphi_i)_{1\leq i\leq n}$ is a sequence of eigenvalues of the Frobenius $\Phi$ on $\Dcris(V)$ in the sense of Remark \ref{rmkRefinementToEigenvalues}. Assume that the $\varphi_i$ have pairwise distinct reductions modulo $\m_A$. Then there is a unique refinement of $V$ associated to $(1\otimes\varphi_i)_{1\leq i\leq n}$.
\end{prop}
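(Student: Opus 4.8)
The plan is to reduce the claim to elementary linear algebra over the artinian local ring $A$. Since $V$ is crystalline, $D \coloneqq \Dcris(V)$ is free of rank $n = \rk_A(V)$ over $K_0 \otimes_{\Q_p} A$, and the linearised Frobenius $\Phi = \varphi^{[K_0:\Q_p]}$ is $(K_0 \otimes_{\Q_p} A)$-linear, since $\varphi^{[K_0:\Q_p]}$ acts trivially on $K_0$. As $L$ splits $K_0$ and $A$ is an $L$-algebra, $K_0 \otimes_{\Q_p} A = (K_0 \otimes_{\Q_p} L) \otimes_L A \cong \prod_{\sigma \in \Hom_{\Q_p}(K_0, L)} A$, and $D$ decomposes accordingly as $\bigoplus_\sigma D_\sigma$ with each $D_\sigma$ free of rank $n$ over $A$ and stable by $\Phi$. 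Giving a complete flag on $D$ stable by $\Phi$ is the same as giving, for each $\sigma$, a complete flag on $D_\sigma$ stable by $T_\sigma \coloneqq \Phi|_{D_\sigma}$. I would then observe that the hypothesis says the characteristic polynomial of each $T_\sigma$ is $\prod_{i=1}^n(\lambda - \varphi_i)$, that the distinctness of the reductions $\bar\varphi_i \in L$ forces $\varphi_i - \varphi_j \in A^\times$ for $i \neq j$, and that the statement ``the refinement is associated to $(1\otimes\varphi_i)_i$'' in the sense of Remark~\ref{rmkRefinementToEigenvalues} translates, factor by factor, into ``$T_\sigma$ acts by $\varphi_i$ on the $i$-th graded piece of the flag''. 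So the statement to prove becomes: for $T$ an $A$-linear endomorphism of a free $A$-module $D$ of rank $n$ with characteristic polynomial $\prod_i(\lambda - \varphi_i)$ and $\varphi_i - \varphi_j \in A^\times$ for $i \neq j$, there is a unique complete flag $\Fil_\bullet D$ that is $T$-stable with $T$ acting by $\varphi_j$ on $\Fil_j D/\Fil_{j-1}D$.

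For existence, I would use Cayley--Hamilton to get $\prod_i(T - \varphi_i) = 0$ on $D$, note that the monic polynomials $\lambda - \varphi_i$ are pairwise comaximal in $A[\lambda]$ (their differences being units), and invoke the Chinese Remainder Theorem to obtain $D = \bigoplus_{i=1}^n D^{(i)}$ with $D^{(i)} = \ker(T - \varphi_i)$ and $T$ acting by $\varphi_i$ on $D^{(i)}$. Each $D^{(i)}$ is a direct summand of the free module $D$, hence free over the local ring $A$; reducing modulo $\m_A$, the operator $\bar T$ on the $n$-dimensional $L$-space $D \otimes_A L$ has characteristic polynomial $\prod_i(\lambda - \bar\varphi_i)$ with pairwise distinct roots, so is diagonalisable with each eigenspace $D^{(i)} \otimes_A L$ of dimension exactly $1$; therefore each $D^{(i)}$ is free of rank $1$ over $A$, and $\Fil_j D \coloneqq \bigoplus_{i \le j} D^{(i)}$ is a complete flag with the required properties.

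For uniqueness, I would argue by induction on $j$ that any $T$-stable complete flag $\Fil'_\bullet D$ with $T = \varphi_j$ on $\Fil'_j D/\Fil'_{j-1}D$ satisfies $\Fil'_j D = \Fil_j D = \bigoplus_{i \le j} D^{(i)}$, the case $j = 0$ being trivial. Granting this for $j-1$, pass to $\overline D \coloneqq D/\Fil_{j-1}D = \bigoplus_{i \ge j} D^{(i)}$, where $T - \varphi_j$ kills $D^{(j)}$ and is invertible on each $D^{(i)}$ with $i > j$ (as $\varphi_i - \varphi_j \in A^\times$), so $\ker(T - \varphi_j) = D^{(j)}$ in $\overline D$. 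Then $\Fil'_j D/\Fil_{j-1}D$ is a free rank-$1$ $A$-submodule of $\overline D$ on which $T$ acts by $\varphi_j$, hence is contained in $D^{(j)}$; but a free rank-$1$ submodule of the free rank-$1$ module $D^{(j)}$ over the artinian local ring $A$ is all of it, since a generator, being a non-zero-divisor in $A$, is a unit. Thus $\Fil'_j D = \Fil_j D$, completing the induction.

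I do not expect a serious obstacle: the argument is essentially linear algebra, and the only steps requiring attention are that each generalised eigenspace $D^{(i)}$ has rank exactly $1$ — which is where the distinctness of the reductions $\bar\varphi_i$ is genuinely used — and the passage from ``non-zero-divisor'' to ``unit'' in the uniqueness step, which uses that $A$ is artinian. The one real input is that $\Dcris(V)$ is free of full rank with $\Phi$ linear over $K_0 \otimes_{\Q_p} A$, which is exactly the content of the crystalline hypothesis.
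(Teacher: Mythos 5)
Your proof is correct and follows essentially the same route as the paper: decompose $\Dcris(V)$ over the idempotents of $K_0\otimes_{\Q_p}A$, reduce to an $A$-linear operator with invertible eigenvalue differences, obtain the eigenspace decomposition, show each eigenspace has rank one, and use that for both existence and uniqueness. The only real differences are cosmetic: the paper isolates the linear algebra into Lemma~\ref{lemEigenvalues}, constructs an explicit diagonalisation basis by lifting one from the residue field, and runs the uniqueness argument via matrix entries, whereas you invoke CRT for the eigenspace splitting and phrase uniqueness via submodules of the quotient $D/\Fil_{j-1}D$ together with the non-zero-divisor--implies--unit fact in an artinian local ring. Both formulations are valid, and yours is arguably a touch cleaner in the uniqueness step.
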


The proof uses the following classic result of linear algebra.

\begin{lem} \label{lemEigenvalues}
Let $R$ be a commutative ring with unity, let $M$ be an $R$-module and $f\in\End_R(M)$.
\begin{enumerate}[label=\emph{(\arabic*)},ref=(\arabic*)]
\item \label{itemEigendecomposition}
Assume that there are $\lambda_1,\ldots,\lambda_m\in R$ for some $m\in\Z_{>0}$, with $\lambda_i-\lambda_j\in R^\times$ for all $1\leq i<j\leq m$, such that $(f-\lambda_1)\cdots(f-\lambda_m) = 0 \in \End_R(M)$. Then $M$ is the direct sum of eigenspaces
\begin{equation} \label{eqDirectSumEigenspaces}
	M = \ker(f-\lambda_1) \oplus \cdots \oplus \ker(f-\lambda_m) .
\end{equation}
\item \label{itemEigenbasis}
Assume that $R$ is local with maximal ideal $\m$, and that $M$ is free of rank $n\in\Z_{>0}$. Also assume that the characteristic polynomial of $f$ can be written $P_f(\lambda)=\prod_{i=1}^n(\lambda-\lambda_i)\in R[\lambda]$ with the $\lambda_i$ having pairwise distinct reductions $\overline{\lambda_i}$ modulo $\m$. Then the eigenspaces $\ker(f-\lambda_i)$, $1\leq i\leq n$, are free of rank $1$ over $R$ and any choice of generators of these eigenspaces form a basis of $M$.
\end{enumerate}
\end{lem}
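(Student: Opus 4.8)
The plan is to prove \ref{itemEigendecomposition} by exhibiting explicit orthogonal idempotents of Lagrange-interpolation type, and then to deduce \ref{itemEigenbasis} from \ref{itemEigendecomposition} (applied via Cayley--Hamilton) together with a reduction modulo the maximal ideal.

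For \ref{itemEigendecomposition}, since the differences $\lambda_i-\lambda_j$ are units, for each $1\le i\le m$ the element
\[
	e_i \coloneqq \prod_{j\ne i}(\lambda_i-\lambda_j)^{-1}(f-\lambda_j) \in R[f]\subseteq\End_R(M)
\]
is well defined. First I would check that $\sum_{i=1}^m e_i = 1$: the polynomial $\sum_i\prod_{j\ne i}(\lambda_i-\lambda_j)^{-1}(X-\lambda_j)-1\in R[X]$ has degree $<m$ and takes the value $0$ at $\lambda_1,\dots,\lambda_m$, so peeling off the monic factors $X-\lambda_1,\dots,X-\lambda_m$ one at a time — legitimate because each $\lambda_k-\lambda_\ell$ is a unit, so vanishing at $\lambda_{k+1}$ forces the cofactor to vanish there — shows it is divisible by $\prod_k(X-\lambda_k)$, hence by a degree count it is zero; evaluating at $f$ gives the identity. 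Next, for $i\ne j$ the product $e_ie_j$ equals, up to a unit of $R$, $\prod_{k\ne i}(f-\lambda_k)\cdot\prod_{k\ne j}(f-\lambda_k)$, in which every factor $f-\lambda_\ell$ ($1\le\ell\le m$) occurs (it occurs in the first product if $\ell\ne i$, and in the second if $\ell=i$, since then $\ell\ne j$), so this product is divisible by $(f-\lambda_1)\cdots(f-\lambda_m)=0$; hence $e_ie_j=0$. Combined with $\sum e_i=1$ this gives $e_i^2=e_i$ and the decomposition $M=\bigoplus_i e_iM$. Finally $(f-\lambda_i)e_i$ equals $\prod_k(f-\lambda_k)$ times a unit, hence is $0$, so $e_iM\subseteq\ker(f-\lambda_i)$; conversely if $v\in\ker(f-\lambda_i)$ then for each $j\ne i$ the factor $f-\lambda_i$ occurring in $e_j$ kills $v$, so $e_jv=0$ and $v=\sum_k e_kv=e_iv\in e_iM$. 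Thus $e_iM=\ker(f-\lambda_i)$, which establishes \eqref{eqDirectSumEigenspaces}.

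For \ref{itemEigenbasis}: since $R$ is local, the assumption that the $\overline{\lambda_i}$ are pairwise distinct forces $\lambda_i-\lambda_j\notin\m$, i.e.\ $\lambda_i-\lambda_j\in R^\times$; and Cayley--Hamilton gives $\prod_{i=1}^n(f-\lambda_i)=P_f(f)=0$. So part \ref{itemEigendecomposition} applies with $m=n$, yielding idempotents $e_i$ with $M=\bigoplus_{i=1}^n e_iM$ and $e_iM=\ker(f-\lambda_i)$. Each $e_iM$ is a direct summand of the free module $M$, hence finitely generated and projective, hence free over the local ring $R$, of some rank $r_i\ge 0$ with $\sum_i r_i=n$. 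To pin down $r_i=1$, set $k\coloneqq R/\m$ and reduce modulo $\m$: the endomorphism $\bar f$ of the $n$-dimensional $k$-vector space $\bar M\coloneqq M\otimes_R k$ has characteristic polynomial $\prod_i(X-\overline{\lambda_i})$ with pairwise distinct roots, so each eigenspace $\ker(\bar f-\overline{\lambda_i})$ is one-dimensional; moreover $e_i\otimes_R k=\prod_{j\ne i}(\overline{\lambda_i}-\overline{\lambda_j})^{-1}(\bar f-\overline{\lambda_j})$ is exactly the spectral projector of $\bar f$ onto $\ker(\bar f-\overline{\lambda_i})$, and $(e_iM)\otimes_R k=(e_i\otimes_R k)(\bar M)=\ker(\bar f-\overline{\lambda_i})$, whence $r_i=\dim_k\big((e_iM)\otimes_R k\big)=1$. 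Therefore each $\ker(f-\lambda_i)=e_iM$ is free of rank $1$, and choosing a generator $v_i$ of each gives $M=\bigoplus_i Rv_i$; equivalently, any family of generators of the eigenspaces forms a basis of $M$ (a surjection $R^n\twoheadrightarrow M$ between free $R$-modules of rank $n$ is an isomorphism).

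The routine bookkeeping with idempotents is the bulk of the argument; the only points needing a little care are the Lagrange/interpolation step over a general commutative ring (to get $\sum e_i=1$ out of $\prod(f-\lambda_i)=0$), and, in part \ref{itemEigenbasis}, the identification of $(e_iM)\otimes_R k$ with the $\bar f$-eigenspace so as to force $r_i=1$. I do not expect any genuine obstacle beyond keeping track of the hypotheses (units, locality, Cayley--Hamilton).
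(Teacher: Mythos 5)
Your proof is correct, and while the overall skeleton is the same (a partition-of-unity argument for part (1), then reduction modulo $\m$ plus Cayley--Hamilton for part (2)), the execution differs at both stages. For (1), you build explicit Lagrange-type orthogonal idempotents $e_i=\prod_{j\ne i}(\lambda_i-\lambda_j)^{-1}(f-\lambda_j)$ and verify $\sum e_i=1$ by a degree-count/divisibility argument, whereas the paper constructs an abstract B\'ezout identity $\sum_i P_i(\lambda)\prod_{j\ne i}(\lambda-\lambda_j)=1$ by induction on $m$ and then proves directness of the sum by a second induction; your idempotent calculus makes both the decomposition and its directness immediate and avoids the two inductions, at the modest cost of the polynomial-vanishing argument (which you correctly justify over an arbitrary commutative ring via monic division and invertibility of the $\lambda_i-\lambda_j$). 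For (2), you observe that each $e_iM$ is a finitely generated projective module over the local ring $R$, hence free, and then pin down the rank as $1$ by identifying $(e_iM)\otimes_R k$ with the $\overline{\lambda_i}$-eigenspace of $\bar f$; the paper instead lifts a diagonalising basis of $\bar M$ to $M$, uses part (1) to correct the lifts so they land in the eigenspaces, and invokes Nakayama to conclude they generate, finishing with the fact that $n$ generators of a free module of rank $n$ form a basis. Your route leans on the standard fact that finitely generated projectives over a local ring are free (which is itself a Nakayama argument in disguise), so the two proofs cost about the same; yours is arguably cleaner and more conceptual, while the paper's is more hands-on and self-contained.
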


\begin{proof}
We first prove \ref{itemEigendecomposition}. One can find polynomials $P_1,\ldots,P_m\in R[\lambda]$ such that
\begin{equation} \label{eqBezoutPolynomials}
	\sum_{1\leq i\leq m}P_i(\lambda)\prod_{j\neq i}(\lambda-\lambda_j) = 1 \in R[\lambda] \,;
\end{equation}
we show this by induction on $m$. The case $m=1$ is trivial with $P_1=1$; for $m>1$, let $Q_1,\ldots,Q_{m-1}\in R[\lambda]$ satisfy
\[
	\sum_{1\leq i\leq m-1}Q_i(\lambda)\prod_{j\neq i,m}(\lambda-\lambda_j) = 1 \in R[\lambda] \,.
\]
For $l\coloneqq\prod_{1\leq i<m}(\lambda_m-\lambda_i)\in R^\times$, there is $Q\in R[\lambda]$ such that $\prod_{1\leq i<m}(\lambda-\lambda_i)=Q(\lambda)(\lambda-\lambda_m)+l$. Then taking $P_i\coloneqq-l^{-1}Q_iQ\in R[\lambda]$ for $1\leq i<m$ and $P_m\coloneqq l^{-1}$ gives \eqref{eqBezoutPolynomials}.

Applying \eqref{eqBezoutPolynomials} to $\lambda=f$, we see that any $v\in M$ decomposes as $v=v_1+\ldots+v_m$, where $v_i\coloneqq\prod_{j\neq i}(f-\lambda_j)P_i(f)v$ for $1\leq i\leq m$, and obviously $v_i\in\ker(f-\lambda_i)$ by the assumption. Now to check that the sum $M=\ker(f-\lambda_1)+\ldots+\ker(f-\lambda_m)$ is direct, let $v_1+\ldots+v_m=0$, with $v_i\in\ker(f-\lambda_i)$. We show by induction on the number of nonzero $v_i$ that $v_i=0$ for all $i$. Take $i_0$ such that $v_{i_0}\neq0$; applying $f-\lambda_{i_0}$ gives $\sum_{i\neq i_0}(\lambda_i-\lambda_{i_0})v_i=0$, so by induction $(\lambda_i-\lambda_{i_0})v_i=0$ for all $1\leq i\leq m$. We get $v_i=0$ for $i\neq i_0$ since $\lambda_i-\lambda_{i_0}$ is a unit, then $v_{i_0}=0$. This proves \ref{itemEigendecomposition}.

We now prove \ref{itemEigenbasis}. The reduction of $f$ to the $(R/\m)$-vector space $M/\m M$ has pairwise distinct eigenvalues $\overline{\lambda_i}$; choose a corresponding diagonalisation basis $(\overline{v_i})_{1\leq i\leq n}$. Choose a lift $w_i\in M$ of each $\overline{v_i}$. By \ref{itemEigendecomposition} and Cayley-Hamilton's theorem, we can write $w_i=v_{i,1}+\ldots+v_{i,n}$ with $v_{i,j}\in\ker(f-\lambda_j)$ for all $1\leq i,j\leq n$. Then, in $M/\m M$,
\[
	\sum_{1\leq j\leq n}(\overline{\lambda_j}-\overline{\lambda_i})\overline{v_{i,j}} = f(\overline{v_i})-\overline{\lambda_i}\overline{v_i} = 0
\]
for all $1\leq i\leq n$. Applying \ref{itemEigendecomposition} to $M/\m M$, we see that the eigenvectors $(\overline{\lambda_j}-\overline{\lambda_i})\overline{v_{i,j}}$ are zero for all $1\leq i,j\leq n$, which means, since the $\overline{\lambda_i}$ are pairwise distinct, that $v_{i,j}\in\m$ for all $i\neq j$. Therefore $v_i\coloneqq v_{i,i}=w_i-\sum_{j\neq i}v_{i,j}$ lifts $\overline{v_i}$. By Nakayama's lemma, the $v_i$, $1\leq i\leq n$, generate $M$. Since $M$ is free of rank $n$, the $v_i$ actually form a basis of $M$. Since $Rv_i\subseteq\ker(f-\lambda_i)$ for each $i$, \eqref{eqDirectSumEigenspaces} gives $Rv_i=\ker(f-\lambda_i)$. This proves \ref{itemEigenbasis}.
\end{proof}

\begin{proof}[Proof of Proposition \ref{propEigenvaluesToRefinement}]
Write $K_0\tens_{\Q_p}A=\bigoplus_{\tau\in\Sigma_0}Ae_\tau$, where $e_\tau^2=e_\tau$ for $\tau\in\Sigma_0\coloneqq\Hom_{\Q_p}(K_0,L)$. Then $\Dcris(V)=\bigoplus_{\tau\in\Sigma_0}\Dcris(V)_\tau$ where each $\Dcris(V)_\tau\coloneqq e_\tau\Dcris(V)$ is a free $A$-module of rank $n$. Each $\Dcris(V)_\tau$ is stabilised by $\Phi$, and the induced $\Phi_\tau$ is an endomorphism of $A$-modules with sequence of eigenvalues $(\varphi_i)_{1\leq i\leq n}$. Applying Lemma \ref{lemEigenvalues}\ref{itemEigenbasis}, we get a diagonalisation basis $(b_{\tau,i})_{1\leq i\leq n}$ of $\Dcris(V)_\tau$ for $(\varphi_i)_{1\leq i\leq n}$. Setting $b_i\coloneqq\sum_{\tau\in\Sigma_0}b_{\tau,i}$, the family $(b_i)_{1\leq i\leq n}$ is a diagonalisation basis of $\Dcris(V)$ for $(1\tens\varphi_i)_{1\leq i\leq n}$.

Let $\Fcal_\bullet$ be any refinement of $V$ associated to $(1\otimes\varphi_i)_{1\leq i\leq n}$. Let $(a_i)_{1\leq i\leq n}$ be a basis of $\Fcal_\bullet$, and write $a_j=\sum_{j=1}^na_{ij}b_i$ for all $1\leq j\leq n$. Using an induction on the columns of the matrix $(a_{ij})_{1\leq i,j\leq n}$, we see that it is upper triangular, hence has invertible diagonal entries. Therefore $(b_i)_{1\leq i\leq n}$ is a basis of $\Fcal_\bullet$; this proves unicity of $\Fcal_\bullet$. Conversely, the flag arising from $(b_i)_{1\leq i\leq n}$ induces the ordered sequence of eigenvalues $(1\otimes\varphi_i)_{1\leq i\leq n}$; this proves existence.
\end{proof}

\subsection{Triangulations and refinements}

We recall the notion of triangular $(\varphi,\Gamma)$-module from \cite[Definition 2.3.2]{bellaicheChenevier} and \cite[Definition 3.3.8]{bhs3} in the following definition.

\begin{dfn}
Let $A\in\ArQ$. Let $D$ be a $(\varphi,\Gamma_K)$-module over $\Robba_{A,K}$ which is free as an $\Robba_{A,K}$-module. A \emph{triangulation} of $D$ is a strictly increasing sequence $\Fil_\bullet D$ of $(\varphi,\Gamma_K)$-submodules over $\Robba_{A,K}$
\[
	\Fil_0D\coloneqq\{0\} \subsetneq\ldots\subsetneq \Fil_iD \subsetneq\ldots\subsetneq \Fil_dD\coloneqq D
\]
with $d=\rk_{\Robba_{A,K}}(D)$, such that each $\Fil_i(D)$ is a direct summand of $D$ as $\Robba_{A,K}$-modules. We say that $D$ is \emph{triangulable} if it can be equipped with a triangulation, and the data of $D$ with a triangulation is called a \emph{triangular} $(\varphi,\Gamma_K)$-module. A representation $V\in\Rep_A(\G_K)$ is called \emph{trianguline} if $\Drig(V)$ is triangulable.
\end{dfn}

Given a triangular $(\varphi,\Gamma_K)$-module $\left(D,\Fil_\bullet D\right)$ over $\Robba_{A,K}$, the graded pieces
\[
	\gr_iD \coloneqq \Fil_iD/\Fil_{i-1}D \,,\quad 1\leq i\leq d
\]
of the triangulation are free of rank 1 as $\Robba_{A,K}$-modules. When $A$ is a $K_0$-algebra, according to Proposition \ref{propRank1PhiGamma} below, for each $1\leq i\leq d$ there is a unique continuous character $\delta_i\colon K^\times\to A^\times$ such that $\gr_iD\iso\Robba_{A,K}(\delta_i)$.

\begin{prop}[Colmez, Kedlaya-Pottharst-Xiao] \label{propRank1PhiGamma}
Let $A\in\ArQ$ be a $K_0$-algebra. Given any $(\varphi,\Gamma_K)$-module $D$ over $\Robba_{A,K}$ which is free of rank $1$ as $\Robba_{A,K}$-module, there is a unique continuous character $\delta\colon K^\times\to A^\times$ such that $D\iso\Robba_{A,K}(\delta)$ as $(\varphi,\Gamma_K)$-modules.
\end{prop}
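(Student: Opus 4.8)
**Proof plan for Proposition \ref{propRank1PhiGamma} (rank-one $(\varphi,\Gamma_K)$-modules over $\Robba_{A,K}$ with $A\in\ArQ$ a $K_0$-algebra).**

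The plan is to bootstrap from the known classification over the residue field $L$ of $A$ and then lift along the nilpotent filtration of $\m_A$. First I would recall the classification of rank-one $(\varphi,\Gamma_K)$-modules over the Robba ring with coefficients in a field: by Kedlaya–Pottharst–Xiao \cite[Construction 6.2.4, Proposition 6.2.8]{kpx} (building on Colmez's work over $\Q_p$), every such module is isomorphic to exactly one $\Robba_{L,K}(\delta)$ for a continuous character $\delta\colon K^\times\to L^\times$, and moreover the functor $\delta\mapsto\Robba_{L,K}(\delta)$ is fully faithful, with $\End(\Robba_{L,K}(\delta))=L$ and no nonzero morphisms between $\Robba_{L,K}(\delta)$ and $\Robba_{L,K}(\delta')$ unless $\delta=\delta'$. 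The hypothesis that $A$ is a $K_0$-algebra is exactly what is needed so that the characters $z^{\kbold}$ and $\nr(a)$ for $a\in A^\times$ make sense with $A$-coefficients, and so that $\Robba_{A,K}(\delta)$ is defined for $\delta\colon K^\times\to A^\times$ in the first place; it also ensures $H^0$ and $H^1$ computations for these modules behave well.

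Next, for uniqueness: if $\Robba_{A,K}(\delta)\iso\Robba_{A,K}(\delta')$ as $(\varphi,\Gamma_K)$-modules over $\Robba_{A,K}$, reduce modulo $\m_A$ to get $\Robba_{L,K}(\bar\delta)\iso\Robba_{L,K}(\bar\delta')$, hence $\bar\delta=\bar\delta'$ by the field case. Then $\delta'\delta^{-1}$ is a character reducing to the trivial character, so it factors through $1+\m_A$, and an isomorphism $\Robba_{A,K}(\delta)\iso\Robba_{A,K}(\delta')$ amounts to a generator of $\Robba_{A,K}(\delta'\delta^{-1})$ fixed by $\varphi$ and $\Gamma_K$ up to nothing, i.e.\ an element of $H^0$ of the trivial-reduction twist; a dévissage up the finite filtration $\m_A\supset\m_A^2\supset\cdots$, using that $H^0(\Robba_{L,K})=L$ (and $H^1(\Robba_{L,K})$ is finite-dimensional over $L$), forces $\delta'\delta^{-1}=1$. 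This part is essentially formal once the field case and the $H^i$ of the trivial module are in hand.

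For existence, the main point is to show that an arbitrary rank-one $D$ over $\Robba_{A,K}$ is isomorphic to some $\Robba_{A,K}(\delta)$. Reducing mod $\m_A$, $\bar D\coloneqq D/\m_AD\iso\Robba_{L,K}(\bar\delta)$ for some $\bar\delta$ by the field case. Now lift $\bar\delta$ to a continuous character $\delta_0\colon K^\times\to A^\times$ (possible since $A^\times\to L^\times$ is surjective with the obstruction to lifting a continuous character of the topologically finitely generated group $K^\times$ vanishing, $1+\m_A$ being a $p$-group); then $D\tens\Robba_{A,K}(\delta_0)^{-1}$ has trivial reduction, so it suffices to treat $D$ with $\bar D\iso\Robba_{L,K}$. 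Proceed by induction on the length of $A$: if $J\subset\m_A$ is a nonzero ideal killed by $\m_A$, apply the inductive hypothesis to $D/JD$ over $A/J$ to get $D/JD\iso\Robba_{A/J,K}(\delta_1)$, lift $\delta_1$ to $\delta\colon K^\times\to A^\times$, and twist so that $D/JD\iso\Robba_{A/J,K}$; then $D$ sits in an extension of $\Robba_{A/J,K}$ by $\Robba_{L,K}\tens_L J$, classified by $H^1(\Robba_{A/J,K},\Robba_{L,K}\tens_L J)\iso H^1(\Robba_{L,K})\tens_L J$. The obstruction is that this $H^1$ need not vanish, so one cannot conclude $D\iso\Robba_{A,K}$ directly; instead one uses that $H^1(\Robba_{L,K})$ is spanned by the classes of the derivatives of the character — concretely by $\diff(z^{\kbold})$-type and $\diff(\nr)$-type cocycles, as in \cite[\S6.2]{kpx} — so every extension class is realised by a character-twist $\Robba_{A,K}(1+\text{infinitesimal character})$. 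Matching the extension class of $D$ with that of an explicit $\Robba_{A,K}(\delta_2)$ (with $\delta_2$ reducing to trivial mod $J$) yields $D\iso\Robba_{A,K}(\delta_2)$ after a final twist.

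The main obstacle is precisely this last existence step: controlling $H^1(\Robba_{A,K},-)$ of the trivial-reduction rank-one modules and showing every class is "of character type". I expect to handle it by the explicit computation of $H^0,H^1,H^2$ of $\Robba_{A,K}(\delta)$ in \cite[Theorem 6.2.14 and Proposition 6.2.8]{kpx}, which in the generic (here trivial-reduction, so $\delta=1$ is a boundary case) situation gives $\dim_L H^1(\Robba_{L,K}) = [K:\Q_p]+1$, exactly matching the dimension of the space of infinitesimal characters $K^\times\to 1+L\varepsilon$ modulo the image of $H^0$; this dimension count is what guarantees that the character twists exhaust all extensions and closes the induction. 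Alternatively, one could cite \cite[Proposition 2.3.1 and Construction 6.2.4]{kpx} directly if the statement there is already given with artinian coefficients, in which case the proof reduces to quoting it and adding the uniqueness dévissage above.
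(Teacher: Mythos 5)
The paper dispatches this proposition with a one‑line citation: it is exactly \cite[Lemma 6.2.13]{kpx} (with ancillary references to Colmez and Bellaïche--Chenevier for the cases $K=\Q_p$). Your proposal instead reconstructs the result from scratch by dévissage along $\m_A$ from the residue‑field case, which is a genuinely different — and more self‑contained — route, though you also note at the end that one could simply quote \cite{kpx} (your internal reference there should be Lemma~6.2.13, not Proposition~2.3.1, which belongs to \cite{bellaicheChenevier}, not to \cite{kpx}).

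Two points in the existence step need to be tightened before your argument actually closes. First, the dévissage requires that the map $\Hom_{\mathrm{cont}}(K^\times,L)\to H^1_{\varphi,\Gamma_K}(\Robba_{L,K})$ sending an infinitesimal character $\eta$ to the class of the extension $\Robba_{L[\varepsilon],K}(1+\varepsilon\eta)$ be \emph{surjective}. You argue this by matching dimensions (both are $[K:\Q_p]+1$), but a dimension count alone does not give surjectivity: you need injectivity of this map as well. Injectivity does hold, but the cleanest way to see it is to invoke the uniqueness you prove first, applied to $A=L[\varepsilon]$ — if $\Robba_{L[\varepsilon],K}(1+\varepsilon\eta)\iso\Robba_{L[\varepsilon],K}(1)$ as $(\varphi,\Gamma_K)$-modules over $\Robba_{L[\varepsilon],K}$, then $\eta=0$ — together with the observation that splitness of the extension over $\Robba_{L,K}$ is equivalent to triviality over $\Robba_{L[\varepsilon],K}$. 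This should be made explicit to rule out any apparent circularity between the two halves of the induction. Second, and relatedly, for the trivial character $\delta=1$ one has $H^0(\Robba_{L,K})=L\neq 0$, so the Euler‑characteristic computation gives $\dim_L H^1=[K:\Q_p]+1$ only because $H^2(\Robba_{L,K})=0$; this should be checked (it holds because $1$ is not of the form $N_{K/\Q_p}|N_{K/\Q_p}|z^{\kbold}$), since the non‑generic character $\delta=1$ is exactly the one your dévissage repeatedly lands on. With these two points spelled out, your proof is correct; it buys self‑containedness at the cost of carefully tracking the $H^0,H^1,H^2$ of the trivial twist, whereas the paper simply outsources everything to the reference.
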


\begin{proof}
This is \cite[Lemma 6.2.13]{kpx}; see also \cite[Proposition 3.1]{colmezTriangulines} over $\Robba_{L,\Q_p}$ and \cite[Proposition 2.3.1]{bellaicheChenevier} over $\Robba_{A,\Q_p}$.
\end{proof}

\begin{dfn}
Let $A\in\ArQ$ be a $K_0$-algebra. Let $\left(D,\Fil_\bullet D\right)$ be a triangular $(\varphi,\Gamma_K)$-module of rank $d$ over $\Robba_{A,K}$. The \emph{parameter} of the triangulation $\Fil_\bullet D$ is the unique continuous character
\[
	\underline{\delta}=(\delta_1,\ldots,\delta_d) \colon (K^\times)^d \to A^\times
\]
such that $\gr_iD\iso\Robba_{A,K}(\delta_i)$ for all $1\leq i\leq d$.
\end{dfn}

The following result somewhat generalises \cite[Proposition 2.4.1]{bellaicheChenevier}.

\begin{prop} \label{propLikeBellaicheChenevier}
Let $A\in\ArQ$ be an algebra over its residue field $L\coloneqq A/\m_AA$ such that $L$ splits $K$. Let $V\in\Rep_A(\G_K)$ be a regular crystalline representation of rank $n\in\Z_{>0}$ and of Hodge-Tate weights $(h_{\tau,1}<\ldots<h_{\tau,n})_{\tau\in\Sigma}$.

Assume that there exist $\kbold=(k_{\tau,i})_{\tau\in\Sigma,1\leq i\leq n}\in(\Z^n)^\Sigma$ and $\varphi_i\in A^\times$ for $1\leq i\leq n$ with pairwise distinct reductions in $L$, such that $V$ is trianguline of parameter
\[
	\underline{\delta} = z^\kbold\nr(\underline{\varphi}) \coloneqq \left(z^{\kbold_1}\nr(\varphi_1),\ldots,z^{\kbold_n}\nr(\varphi_n)\right) .
\]
Then:
\begin{enumerate}[label=\emph{(\arabic*)},ref=(\arabic*)]
\item \label{itemHTweightsTriangulation}
There exists $w=(w_\tau)_{\tau\in\Sigma_n}\in(\Scal_n)^\Sigma$ such that $k_{\tau,i}=h_{\tau,w_\tau^{-1}(i)}$ for all $\tau\in\Sigma$ and $1\leq i\leq n$.
\item \label{itemRefinementTriangulation}
The linearised Frobenius $\Phi$ on $\Dcris(V)$ admits $(1\tens\varphi_i)_{1\leq i\leq n}$ as eigenvalues, and this ordering determines a refinement $\Fcal_\bullet$ of $V$.
\item \label{itemFlagTriangulation}
The position $x\in\prod_{\tau\in\Sigma}(G/B)(A)$ of $\Fil_\bullet\DdR(V)$ relative to $\Fcal_{\dR,\bullet}\coloneqq K\tens_{K_0}\Fcal_\bullet$ lies in the cell $\prod_{\tau\in\Sigma}(Bw_\tau B/B)(A)$.
\end{enumerate}
\end{prop}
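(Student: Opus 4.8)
The plan is to transport the given triangulation of $\Drig(V)$ to the category of filtered $\varphi$-modules via Berger's dictionary, and then to read the three assertions off the graded pieces of the resulting flag.

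Setup. First I would set $D\coloneqq\Drig(V)$, which is crystalline since $V$ is, and fix a triangulation $\Fil_\bullet D$ of parameter $\underline{\delta}=z^{\kbold}\nr(\underline{\varphi})$, so that $\gr_i D\iso\Robba_{A,K}(z^{\kbold_i}\nr(\varphi_i))$ with $\kbold_i=(k_{\tau,i})_\tau$. Each such rank-one module is crystalline — its character is locally algebraic with unramified smooth part $\nr(\varphi_i)$ — has single $\tau$-Hodge-Tate weight $k_{\tau,i}$ for every $\tau\in\Sigma$, and $\Dcris\bigl(\Robba_{A,K}(z^{\kbold_i}\nr(\varphi_i))\bigr)$ is free of rank one over $K_0\tens_{\Q_p}A$ with $\Phi$ acting by $1\tens\varphi_i$ (this is where the normalisation of $\nr(-)$ is used). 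Since each $\Fil_i D$ is a direct summand, hence a saturated sub-$(\varphi,\Gamma_K)$-module of the crystalline module $D$, it is itself crystalline by Berger's theory (\cite[Théorème A]{berger08}, \cite[Théorème 3.6]{berger02}), and likewise for the quotients. Applying $\Dcris(-)$, respectively $\DdR(-)$ with its Hodge filtration, to $\Fil_\bullet D$ thus yields a complete flag $\Fcal_\bullet\coloneqq\Dcris(\Fil_\bullet D)$ of $\Dcris(V)$ by $\Phi$-stable direct summands and a complete flag $\DdR(\Fil_\bullet D)=K\tens_{K_0}\Fcal_\bullet$ of $\DdR(V)$; crucially, the short exact sequences $0\to\Fil_{i-1}D\to\Fil_i D\to\gr_i D\to0$ become \emph{strict} short exact sequences of filtered $\varphi$-modules, so that the Hodge filtration on $\DdR(\Fil_i D)$ is induced from $\DdR(V)$ and the one on $\DdR(\gr_i D)$ is the quotient filtration.

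Assertions \ref{itemHTweightsTriangulation} and \ref{itemRefinementTriangulation}. For \ref{itemHTweightsTriangulation}, passing to $\tau$-components and to graded pieces for the Hodge filtration in the strict sequences above shows that the multiset of $\tau$-Hodge-Tate weights of $V$ is the disjoint union over $i$ of those of the $\gr_i D$, namely $\{k_{\tau,i}:1\le i\le n\}$; as $V$ is regular this equals $\{h_{\tau,1},\dots,h_{\tau,n}\}$ with multiplicity one, which determines a unique $w_\tau\in\Scal_n$ with $k_{\tau,i}=h_{\tau,w_\tau^{-1}(i)}$. For \ref{itemRefinementTriangulation}, the flag $\Fcal_\bullet$ is $\Phi$-stable with $i$-th graded piece $\Dcris(\gr_i D)$ carrying $\Phi=1\tens\varphi_i$, hence the characteristic polynomial of $\Phi$ on $\Dcris(V)$ is $\prod_{i=1}^n(\lambda-1\tens\varphi_i)$ and $(1\tens\varphi_i)_i$ is a sequence of eigenvalues in the sense of Remark \ref{rmkRefinementToEigenvalues}; since the $\varphi_i$ have pairwise distinct reductions, Proposition \ref{propEigenvaluesToRefinement} produces the unique refinement associated to this ordered sequence, and it coincides with $\Fcal_\bullet$ because $\Fcal_\bullet$ visibly induces the same sequence. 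In particular $\Fcal_{\dR,\bullet}\coloneqq K\tens_{K_0}\Fcal_\bullet=\DdR(\Fil_\bullet D)$.

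Assertion \ref{itemFlagTriangulation}. Since $K\tens_{\Q_p}A\iso\prod_{\tau\in\Sigma}A$, it suffices to check the cell membership over each $\tau$, working with the free $A$-module $\DdR(V)_\tau$ of rank $n$, reference flag $\DdR(\Fil_\bullet D)_\tau$ and moving flag $\Fil_\bullet\DdR(V)_\tau=(\Fil^{-h_{\tau,j}}\DdR(V)_\tau)_j$. By Lemma \ref{lemNonPathologicalFlag} this amounts to showing that
\[
	\frac{\Fil^{-h_{\tau,j}}\DdR(V)_\tau\cap\DdR(\Fil_{w_\tau(j)}D)_\tau}{\Fil^{-h_{\tau,j}}\DdR(V)_\tau\cap\DdR(\Fil_{w_\tau(j)-1}D)_\tau}
\]
is free of rank one over $A$ for all $1\le j\le n$. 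Using that the Hodge filtration on $\DdR(\Fil_m D)$ is induced and that $0\to\DdR(\Fil_{m-1}D)\to\DdR(\Fil_m D)\to\DdR(\gr_m D)\to0$ is strict, this quotient identifies with $\Fil^{-h_{\tau,j}}\DdR(\gr_{w_\tau(j)}D)_\tau$, which is the whole rank-one module $\DdR(\gr_{w_\tau(j)}D)_\tau$ because the single $\tau$-Hodge-Tate weight of $\gr_{w_\tau(j)}D$ is $k_{\tau,w_\tau(j)}=h_{\tau,j}$ by \ref{itemHTweightsTriangulation}. This gives \ref{itemFlagTriangulation}.

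I expect the main obstacle to be pinning down precisely the $\ArQ$-coefficient form of Berger's dictionary invoked above: that saturated sub-$(\varphi,\Gamma_K)$-modules and quotients of a crystalline module over $\Robba_{A,K}$ are crystalline, that the associated functor to filtered $\varphi$-modules is strictly exact and compatible with forming induced and quotient filtrations, and that it sends $\Robba_{A,K}(z^{\kbold_i}\nr(\varphi_i))$ to the rank-one filtered $\varphi$-module with Frobenius $1\tens\varphi_i$ and $\tau$-jump at $-k_{\tau,i}$. Everything else is bookkeeping, the only delicate part being to keep the permutations $w_\tau$ versus $w_\tau^{-1}$, the sign convention for Hodge-Tate weights, and the orientation of flags in Lemma \ref{lemNonPathologicalFlag} mutually consistent.
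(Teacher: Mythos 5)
Your proposal is correct and follows essentially the same route as the paper's proof: pass through Berger's equivalence (here via the functors $\Dcalcris,\DcaldR$ of \cite{berger02,berger08} rather than the names $\Dcris,\DdR$, but the content is identical), use exactness/strictness on the crystalline subcategory to turn the triangulation into flags of $\Dcris(V)$ and $\DdR(V)$, read off the Frobenius eigenvalues and Hodge--Tate weights from the rank-one graded pieces $\Robba_{A,K}(z^{\kbold_i}\nr(\varphi_i))$, and conclude with Lemma~\ref{lemNonPathologicalFlag}. The only cosmetic difference is that you apply Lemma~\ref{lemNonPathologicalFlag} directly with $\Fil_\bullet\DdR(V)$ as the moving flag and $\Fcal_{\dR,\bullet}$ as the reference, whereas the paper computes the position of $\Fcal_{\dR,\bullet}$ relative to $\Fil_\bullet\DdR(V)$ and then passes to the inverse at the end; your orientation is slightly more direct but not a different argument. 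Your worry about pinning down the $A$-coefficient form of Berger's dictionary is exactly the point the paper handles by invoking \cite[Th\'eor\`eme 3.6]{berger02}, \cite[Th\'eor\`eme A]{berger08} and \cite[Example 6.2.6]{kpx}.
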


\begin{proof}
Recall from Berger's work the functor $\Dcalcris$ from $\mathbf{Mod}^{\varphi,\Gamma_K}_A$ to the category of $K_0$-modules equipped with a $\varphi$-semilinear endomorphism (see \cite[Théorème 3.6]{berger02}) and the functor $\DcaldR$ from $\mathbf{Mod}^{\varphi,\Gamma_K}_A$ to $\Filcat_{K\tens_{\Q_p}A}$ (see \cite[Théorème A]{berger08}). They are characterised by the property that $\Dcalcris(\Drig(V))=\Dcris(V)$ and $\DcaldR(\Drig(V))=\DdR(V)$ for all $V\in\Rep_A(\G_K)$. It is also known that $\Dcalcris$ and $\DcaldR$ are exact functors on the full subcategory of crystalline $(\varphi,\Gamma_K)$-modules, \emph{i.e.\ }$(\varphi,\Gamma_K)$-modules $D$ such that $\dim_{K_0}\Dcalcris(D)=\rk_{\Robba_K}D$.

Let $(D_i)_{0\leq i\leq n}$ be a triangulation of $\Drig(V)$ of parameter $\underline{\delta}$. For $0\leq i\leq n$, write $\Fcal_i\coloneqq\Dcalcris(D_i)$. The $D_i$ are crystalline (as subobjects of $\Drig(V)$), hence by exactness of $\Dcalcris$ each exact sequence
\[
	0 \to D_{i-1} \to D_i \to \Robba_{A,K}\left(z^{\kbold_i}\nr(\varphi_i)\right) \to 0
\]
induces an exact sequence
\[
	0 \to \Fcal_{i-1} \to \Fcal_i \to \Dcalcris\left(\Robba_{A,K}\left(z^{\kbold_i}\nr(\varphi_i)\right)\right) \to 0
\]
of $K_0\tens_{\Q_p}A[\varphi]$-modules. It is known that $\Dcalcris\left(\Robba_{A,K}\left(z^{\kbold_i}\nr(\varphi_i)\right)\right)$ is free of rank one over $K_0\tens_{\Q_p}A$ and that its linearised Frobenius acts by multiplication by $\varphi_i$ \cite[Example 6.2.6]{kpx}. Therefore $\Phi$ acts on $\Fcal_i/\Fcal_{i-1}$ by multiplication by $1\tens\varphi_i$, thus $(1\tens\varphi_i)_{1\leq i\leq n}$ is a sequence of eigenvalues of $\Phi$ and $(\Fcal_i)$ is the unique (see Proposition \ref{propEigenvaluesToRefinement}) refinement of $V$ associated to $(1\tens\varphi_i)_{1\leq i\leq n}$. This proves \ref{itemRefinementTriangulation}.

Similarly, the exact functor $\DcaldR$ induces for each $1\leq i\leq n$ an exact sequence
\[
	0 \to \Fcal_{\dR,i-1} \to \Fcal_{\dR,i} \to \DcaldR\left(\Robba_{A,K}\left(z^{\kbold_i}\nr(\varphi_i)\right)\right) \to 0
\]
in $\Filcat_{K\tens_{\Q_p}A}$. It induces, for each $1\leq i\leq n$, each $\tau\in\Sigma$ and each $k\in\Z$, an isomorphism
\begin{multline} \label{eqRefinementHT1}
	\Fil^k\left(\Fcal_{\dR,i}\right)_\tau/\Fil^k\left(\Fcal_{\dR,i-1}\right)_\tau \\
	\isoto \Fil^k\DcaldR\left(\Robba_{A,K}\left(z^{\kbold_i}\nr(\varphi_i)\right)\right)_\tau \iso
	\begin{cases}
		0 &\text{if } k\neq k_{\tau,i} \\
		A &\text{if } k=k_{\tau,i}
	\end{cases}
\end{multline}
(see \cite[Example 6.2.6]{kpx} for the last isomorphism). Applying \eqref{eqRefinementHT1} to $k=k_{\tau,i}+1$, we get $\Fil^{k_{\tau,i}+1}\left(\Fcal_{\dR,i}\right)_\tau/\Fil^{k_{\tau,i}+1}\left(\Fcal_{\dR,i-1}\right)_\tau\isoto0$, thus
\begin{multline} \label{eqRefinementHT2}
	\Fil^{k_{\tau,i}}\left(\Fcal_{\dR,i}\right)_\tau/\Fil^{k_{\tau,i}+1}\left(\Fcal_{\dR,i}\right)_\tau = \Fil^{k_{\tau,i}}\left(\Fcal_{\dR,i}\right)_\tau/\Fil^{k_{\tau,i}+1}\left(\Fcal_{\dR,i-1}\right)_\tau \\
	\surj \Fil^{k_{\tau,i}}\left(\Fcal_{\dR,i}\right)_\tau/\Fil^{k_{\tau,i}}\left(\Fcal_{\dR,i-1}\right)_\tau
\end{multline}
The exactness of $\DdR$ also shows that $\Fil^k\Fcal_{\dR,i}=\Fcal_{\dR,i}\inter\Fil^k\DdR(V)$ for all $k\in\Z$, therefore \eqref{eqRefinementHT2} rephrases as
\begin{multline} \label{eqRefinementHT3}
	\left(\Fcal_{\dR,i}\inter\Fil^{k_{\tau,i}}\DdR(V)\right)_\tau / \left(\Fcal_{\dR,i}\inter\Fil^{k_{\tau,i}+1}\DdR(V)\right)_\tau \\
	\surj \Fil^{k_{\tau,i}}\left(\Fcal_{\dR,i}\right)_\tau/\Fil^{k_{\tau,i}}\left(\Fcal_{\dR,i-1}\right)_\tau .
\end{multline}
On the other hand, there is an obvious injective morphism
\begin{multline} \label{eqRefinementHT4}
	\left(\Fcal_{\dR,i}\inter\Fil^{k_{\tau,i}}\DdR(V)\right)_\tau / \left(\Fcal_{\dR,i}\inter\Fil^{k_{\tau,i}+1}\DdR(V)\right)_\tau \\
	\inj \Fil^{k_{\tau,i}}\DdR(V)_\tau/\Fil^{k_{\tau,i}+1}\DdR(V)_\tau .
\end{multline}
The right-hand side of \eqref{eqRefinementHT3} is free of rank one over $A$ by \eqref{eqRefinementHT1} applied to $k=k_{\tau,i}$. Therefore the right-hand side of \eqref{eqRefinementHT4} is nonzero: this means that the jumps of the Hodge filtration of $V$ are in degrees $(k_{\tau,i})_{\tau\in\Sigma,1\leq i\leq n}$, which proves \ref{itemHTweightsTriangulation}.

Furthermore, by regularity of $V$, the right-hand side of \eqref{eqRefinementHT4} is actually free of rank one over $A$. Comparing dimensions over $L$, we deduce that \eqref{eqRefinementHT3} and \eqref{eqRefinementHT4} are isomorphisms, hence the $A$-module $\left(\Fcal_{\dR,i}\inter\Fil^{k_{\tau,i}}\DdR(V)\right)_\tau / \left(\Fcal_{\dR,i}\inter\Fil^{k_{\tau,i}+1}\DdR(V)\right)_\tau$ is free of rank $1$. Then \ref{itemFlagTriangulation} follows from Lemma \ref{lemNonPathologicalFlag} (note that our computations give the position of $\Fcal_{\dR,\bullet}$ relative to $\Fil_\bullet\DdR(V)$, which is inverse of the position of the position of $\Fil_\bullet\DdR(V)$ relative to $\Fcal_{\dR,\bullet}$).
\end{proof}

\section{The trianguline variety and companion points on the patched eigenvariety} \label{secVarieteTrianguline}

In this section, we recall the definition of the trianguline variety as well as some of the results of \cite{bhs1} and \cite{bhs2}.

Let $L$ be a finite extension of $\Q_p$ which splits $K$, with integer ring $\intring_L$ and residue field $k_L$; recall the notation $\Sigma\coloneqq\Hom_{\Q_p}(K,L)$. We fix $n\in\Z_{>0}$ and a continuous representation
\[
	\bar{r} \colon \G_K \to \GL_n(k_L)
\]
(where $k_L$ has the discrete topology).

\subsection{The trianguline variety}

Let $\Rr$ be the \emph{framed deformation ring of $\bar{r}$} from the work of Mazur \cite{mazur} and Kisin \cite{kisin09}; it is a complete local noetherian $\intring_L$-algebra with residue field $k_L$, which pro-represents the functor sending a local artinian $\intring_L$-algebra $A$ with residue field $k_L$ to the set of framed deformations of $\bar{r}$ over $A$. Define the \emph{framed deformation variety of $\bar{r}$} as the rigid $L$-analytic space $\Xfrakr\coloneqq\Spf(\Rr)^\rig$, where $\Spf$ is the formal spectrum and $(-)^\rig$ is the rigidification functor of Berthelot \cite[\S0.2]{berthelot} (see also \cite[\S7.1]{dejong}). Using \cite[Proposition 7.1.7]{dejong} and \cite[Proposition 2.3.5]{kisin09}, we see that for any local artinian $L$-algebra $A$ with residue field denoted by $E$, the set of $A$-points $\Xfrakr(A)$ is the set of representations $r_A\colon\G_K\to\GL_n(A)$ such that the composition of $r_A$ with $\GL_n(A)\surj\GL_n(E)$ has image in $\GL_n(\intring_E)$ and is a framed deformation of the composition of $\bar{r}$ with $\GL_n(k_L)\to\GL_n(k_E)$ (where $\intring_E$ and $k_E$ are respectively the integer ring and residue field of $E$).

Let $\Wcal\coloneqq\widehat{\intring_K^\times}$ be the rigid analytic space over $\Q_p$ that parametrises continuous characters of $\intring_K^\times$, \emph{i.e.\ }such that $\Wcal(X)=\Hom_{\mathrm{cont}}\left(\intring_K^\times,\Gamma(X,\func_X)^\times\right)$ for all rigid analytic space $X$ over $\Q_p$; the existence and smoothness of $\Wcal$ can be seen using a decomposition $\intring_K^\times\iso H\times\Z_p^{[K:\Q_p]}$ of topological groups, where $H$ is a finite torsion abelian group (see \cite[Proposition II.5.7]{neukirch}). Similarly, let $\Tcal\coloneqq\widehat{K^\times}$ be the rigid analytic space over $\Q_p$ that parametrises continuous characters of $K^\times$; it is also smooth and any decomposition $K^\times\iso\Z\times\intring_K^\times$ induces an isomorphism $\Tcal\iso\Gm^\rig\times_{\Q_p}\Wcal$ of rigid analytic spaces over $\Q_p$. The restriction of a $K^\times$-character to $\intring_K^\times$ induces a morphism $\Tcal\to\Wcal$ which corresponds to the projection $\Gm^\rig\times_{\Q_p}\Wcal\surj\Wcal$.

Write $\Tcal_L\coloneqq\Tcal\times_{\Q_p}L$. The set
\[
	\enstq{(\delta_i)_{1\leq i\leq n}\in\Tcal_L^n(L)}{\begin{array}{l}
		\delta_i\delta_j^{-1}=z^{-\kbold}\text{ or }\delta_i\delta_j^{-1}=N_{K/\Q_p}(z)z^\kbold\abs{z}_K \\
		\text{for some }i\neq j, \kbold\in\Z_{\geq0}^\Sigma
	\end{array}}
\]
is an analytic subset of $\Tcal_L^n$ (in the sense of \cite[\S9.5.2]{bgr}), so its complement is a Zariski-open subset of $\Tcal_L^n$, which we call $\Tnreg$.

\begin{dfn}[Hellmann] \label{dfnTriangulineVariety}
Let $\Utri$ be the subset
\[
	\Utri\coloneqq\enstq{(r,\underline{\delta})\in\Xfrakr\times_L\Tnreg}{r\text{ is trianguline of parameter }\underline{\delta}}
\]
of $\Xfrakr\times_L\Tnreg$. The \emph{trianguline variety} is the Zariski-closure $\Xtri$ of $\Utri$ in $\Xfrakr\times_L\Tcal_L^n$, with its structure of reduced rigid analytic space over $L$. The composition of the inclusion $\Xtri\inj\Xfrakr\times_L\Tcal_L^n$, the projection $\Xfrakr\times_L\Tcal_L^n\surj\Tcal_L^n$ and the restriction morphism $\Tcal_L^n\to\Wcal_L^n$ is a morphism of rigid analytic varieties over $L$
called the \emph{weight morphism} and denoted $\omega\colon\Xtri\to\Wcal_L^n$.
\end{dfn}

Note that $\Utri$ (resp.\ $\Xtri$) is also noted $U_\tri^\square(\bar{r})^\reg$ or $U_\tri(\bar{r})$ (resp.\ $X_\tri(\bar{r})$) in the literature.

\begin{thm}[Breuil-Hellmann-Schraen] \label{thmXtriGeometry}
\begin{enumerate}[label=\emph{(\arabic*)},ref=(\arabic*)]
\item
The rigid analytic space $\Xtri$ is equidimensional of dimension $n^2+[K:\Q_p]\frac{n(n+1)}{2}$.
\item
The subset $\Utri$ of $\Xtri$ is Zariski-open and Zariski-dense.
\item \label{itemUtriSmooth}
The open set $\Utri$ is smooth over $\Q_p$, and the restriction of $\omega$ to $\Utri$ is a smooth morphism.
\end{enumerate}
\end{thm}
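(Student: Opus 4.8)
The plan, following Breuil--Hellmann--Schraen \cite{bhs1} (who build on Hellmann \cite{hellmann} and on the theory of families of $(\varphi,\Gamma_K)$-modules of Kedlaya--Pottharst--Xiao \cite{kpx}), is to realise $\Utri$ as an open part of an auxiliary \emph{space of triangulations}, where deformation theory is transparent. First I would construct a rigid analytic space $Y$ over $L$ parametrising triples $(r,\underline{\delta},\Fil_\bullet\Drig(r))$, where $(r,\underline{\delta})\in\Xfrakr\times_L\Tcal_L^n$ and $\Fil_\bullet\Drig(r)$ is a triangulation of $\Drig(r)$ of parameter $\underline{\delta}$, together with the forgetful morphism $\pi\colon Y\to\Xfrakr\times_L\Tcal_L^n$. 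The crucial elementary observation is that over $\Tnreg$ the triangulation is \emph{unique}: the two families of conditions cutting out $\Tnreg$ are engineered exactly so that, for $\underline{\delta}\in\Tnreg$, one has $\Hom_{\mathbf{Mod}^{\varphi,\Gamma_K}_A}\!\big(\Robba_{A,K}(\delta_i),\Robba_{A,K}(\delta_j)\big)=0$ for all $i\neq j$ (the first condition via Proposition \ref{propRank1PhiGamma}, the second via $p$-adic Tate duality for $(\varphi,\Gamma_K)$-modules), and this vanishing rigidifies triangulations with parameter in $\Tnreg$ (as in \cite[\S2.3]{bellaicheChenevier}). Hence $\pi$ restricts to a bijection from $Y^\reg\coloneqq\pi^{-1}\big(\Xfrakr\times_L\Tnreg\big)$ onto $\Utri$.

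The heart of the argument is the local structure of $Y^\reg$. Fix $x=(r,\underline{\delta},\Fil_\bullet)\in Y^\reg$ and write $D\coloneqq\Drig(r)$, a triangular $(\varphi,\Gamma_K)$-module with parameter $\underline{\delta}\in\Tnreg$. The completed local ring of $Y$ at $x$ pro-represents the functor on $\ArQ$ sending $A$ to the set of framed deformations of $r$ over $A$ admitting a triangulation lifting $\Fil_\bullet$; by the uniqueness above, this functor is governed by the $(\varphi,\Gamma_K)$-cohomology of the sub-$(\varphi,\Gamma_K)$-module $\End(D)^{+}\subseteq\End(D)$ of endomorphisms preserving $\Fil_\bullet$, whose graded pieces for the induced filtration are the rank-one modules $\Robba_{A,K}(\delta_i\delta_j^{-1})$ for $i\le j$, plus an $n^2$-dimensional framing contribution. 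Because $\underline{\delta}\in\Tnreg$, the characters $\delta_i\delta_j^{-1}$ with $i<j$ avoid the loci where $H^0$ or $H^2$ can be nonzero, so Tate duality and the Euler--Poincaré formula for $(\varphi,\Gamma_K)$-modules (Liu, \cite{kpx}) give $H^2(\End(D)^{+})=0$ and $\dim_L H^1(\End(D)^{+})-\dim_L H^0(\End(D)^{+})=[K:\Q_p]\tfrac{n(n+1)}{2}$ (there being $\tfrac{n(n+1)}{2}$ rank-one graded pieces). The vanishing of $H^2$ makes the functor formally smooth over $L$, and counting dimensions gives
\[
	\dim_x Y^\reg = n^2 + \dim_L H^1(\End(D)^{+}) - \dim_L H^0(\End(D)^{+}) = n^2 + [K:\Q_p]\tfrac{n(n+1)}{2},
\]
independently of $x$; in particular $Y^\reg$ is smooth over $L$, hence over $\Q_p$. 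The same computation, carried out relative to the subobject of $\End(D)^{+}$ controlling deformations with fixed restriction of the parameter to $(\intring_K^\times)^n$, shows in addition that $\omega\circ\pi\colon Y^\reg\to\Wcal_L^n$ is smooth.

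Finally I would compare $Y^\reg$ with $\Utri$. Using the good behaviour of triangulations in families over the regular locus (\cite{kpx}, \cite{hellmann}, \cite{bhs1}) --- concretely, that the condition ``$r$ is trianguline of parameter $\underline{\delta}$'' is \emph{closed} on the Zariski-open subset $\Xfrakr\times_L\Tnreg$ of $\Xfrakr\times_L\Tcal_L^n$, equivalently that the monomorphism $\pi|_{Y^\reg}$ is a closed immersion onto $\Utri$ --- one obtains first $\Utri\cong Y^\reg$ as reduced rigid spaces, which transports the smoothness and the dimension just computed and gives assertion \ref{itemUtriSmooth} together with the pure dimension of $\Utri$; and second that $\Utri$ is a closed analytic subset of $\Xfrakr\times_L\Tnreg$. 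Since Zariski-closure commutes with restriction to a Zariski-open subset, this yields $\Xtri\cap\big(\Xfrakr\times_L\Tnreg\big)=\Utri$, so $\Utri$ is Zariski-open in $\Xtri$; it is Zariski-dense by the very definition of $\Xtri$ as its closure. As $\Utri$ is smooth of pure dimension $n^2+[K:\Q_p]\tfrac{n(n+1)}{2}$, its closure $\Xtri$ is equidimensional of the same dimension. I expect the main obstacle to be the content of the second paragraph together with the closedness input just used: making the family theory of $(\varphi,\Gamma_K)$-modules precise enough to identify the completed local rings of $Y$, to read off the exact dimension from the defining conditions of $\Tnreg$, and to know that triangulations do not degenerate as long as the parameter stays in $\Tnreg$.
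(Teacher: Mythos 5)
The paper's own ``proof'' is simply a citation to \cite[Théorème~2.6]{bhs1}, and your proposal is an accurate reconstruction of the argument in that cited reference: the auxiliary space of triangulations (Hellmann's space), the rigidity of triangulations over $\mathcal{T}^n_{\reg}$ via vanishing of $\mathrm{Hom}$ and $H^2$ of the relevant rank-one $(\varphi,\Gamma_K)$-modules, the Euler--Poincaré dimension count for $\mathrm{End}(D)^+$, and the reduction of openness/equidimensionality to the identification $\Xtri\cap(\Xfrakr\times_L\mathcal{T}^n_{\reg})=\Utri$. So this is essentially the same approach.

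One remark worth flagging, since you correctly identify it as the main obstacle: the statement that $\Utri$ is a \emph{closed} analytic subset of $\Xfrakr\times_L\mathcal{T}^n_{\reg}$ is not a formal consequence of the uniqueness of triangulations there. In \cite{bhs1} (following Hellmann and Kedlaya--Pottharst--Xiao) this is obtained by realising $\Utri$ as the image of a \emph{proper} map from a space of triangulations equipped with extra rigidifications of the graded pieces (isomorphisms $\gr_iD\simeq\Robba(\delta_i)$); properness is what forces the image to be a closed analytic subset, and then the monomorphism statement upgrades it to an isomorphism onto $\Utri$. Your ``$Y$'' needs this extra rigidification data to make $\pi$ proper and to make the parameter a genuine morphism to $\Tcal_L^n$; without it the smoothness and dimension computations still come out right, but the closedness of the image (hence the Zariski-openness of $\Utri$ in $\Xtri$) does not follow directly.
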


\begin{proof}
This is \cite[Théorème 2.6]{bhs1}.
\end{proof}

\begin{dfn}
A point $x=(r,\underline{\delta})\in\Xtri$ is said to be \emph{crystalline} if the representation $r\colon\G_K\to\GL_n(k(x))$ is crystalline ($k(x)$ is the local residue field of $\Xtri$ at $x$).
\end{dfn}

\begin{lem}[Breuil-Hellmann-Schraen] \label{lemXtriPoints}
Let $x=(r,\underline{\delta})\in\Xtri$ be a crystalline point. Then there exist $\hbold=(h_{\tau,i})\in(\Z^\Sigma)^n$ and $\underline{\varphi}=(\varphi_1,\ldots,\varphi_n)\in (k(x)^\times)^n$ such that
\[
	\underline{\delta} = z^\hbold\nr(\underline{\varphi}) \coloneqq \left(z^{\hbold_1}\nr(\varphi_1),\ldots,z^{\hbold_n}\nr(\varphi_n)\right) \,.
\]
Moreover, the $1\tens\varphi_i\in K_0\tens_{\Q_p}k(x)$ are the eigenvalues of the linearised Frobenius $\Phi$ on $\Dcris(r)$ and, for each $\tau\in\Sigma$, the multiset of $\tau$-Hodge-Tate weights of $r$ is $\enstq{h_{\tau,i}}{1\leq i\leq n}$.

If $\varphi_i\varphi_j^{-1}\notin\{1,p^{[K_0:\Q_p]}\}$ for $i\neq j$, then there also exists $\hbold'\in(\Z^\Sigma)^n$ such that $(r,z^{\hbold'}\nr(\underline{\varphi}))\in\Utri$.
\end{lem}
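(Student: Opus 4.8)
The strategy is to read off all the stated information about $\underline{\delta}$ from a single (non-formal) input coming from the work of Hellmann, Hellmann--Schraen and Breuil--Hellmann--Schraen on the structure of $\Xtri$: \emph{every point $(r,\underline{\delta})$ of $\Xtri$ underlies a triangulated $(\varphi,\Gamma_K)$-module}, i.e.\ the $(\varphi,\Gamma_K)$-module $\Drig(r)$ over $\Robba_{k(x),K}$ is trianguline of parameter $\underline{\delta}$. This follows from the identification of $\Xtri=\overline{\Utri}$ with a union of irreducible components of the moduli space of triangulated $(\varphi,\Gamma_K)$-modules (see \cite{hellmannSchraen}, \cite{bhs1}); I take it as known. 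I regard it as the main obstacle: it is the only genuinely deep ingredient, resting on Kedlaya--Pottharst--Xiao's theory of families of $(\varphi,\Gamma_K)$-modules together with a properness argument, whereas everything that follows is a matter of unwinding Berger's functors and the definition of $\Tnreg$.

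Granting this, fix a triangulation $(D_i)_{0\le i\le n}$ of $\Drig(r)$ of parameter $\underline{\delta}=(\delta_1,\dots,\delta_n)$. Since $r$ is crystalline, $\Drig(r)$ is a crystalline $(\varphi,\Gamma_K)$-module, and each step $D_i$---being a direct summand sub-$(\varphi,\Gamma_K)$-module of $\Drig(r)$---is again crystalline (as in the proof of Proposition~\ref{propLikeBellaicheChenevier}; concretely, a descending induction along the triangulation using left-exactness of $\Dcalcris$ and the general bound $\dim_{K_0}\Dcalcris(-)\le\rk_{\Robba_K}(-)$). Hence each graded piece $\Robba_{k(x),K}(\delta_i)$ is a rank-one crystalline $(\varphi,\Gamma_K)$-module over $\Robba_{k(x),K}$, and such a module has parameter $z^{\hbold_i}\nr(\varphi_i)$ for some $\hbold_i\in\Z^\Sigma$ and $\varphi_i\in k(x)^\times$, by the classification of crystalline characters (\cite[Example~6.2.6]{kpx}); this is the asserted shape of $\underline{\delta}$.

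The claims on the linearised Frobenius and on the Hodge--Tate weights are then obtained by running, over the field $A=k(x)$, the same exact-sequence computation as in the proof of Proposition~\ref{propLikeBellaicheChenevier}---noting that its assertion~\ref{itemHTweightsTriangulation}, and the statement in~\ref{itemRefinementTriangulation} that $(1\tens\varphi_i)_i$ is the sequence of eigenvalues of $\Phi$, are derived there without using the regularity of the representation or the distinctness of the $\varphi_i$. In outline: the exact functor $\Dcalcris$ presents $\Dcris(r)$ as a successive extension of the $\Dcalcris(\Robba_{k(x),K}(z^{\hbold_i}\nr(\varphi_i)))$, each free of rank one over $K_0\tens_{\Q_p}k(x)$ with linearised Frobenius acting by $1\tens\varphi_i$ (\cite[Example~6.2.6]{kpx}), so $\Phi$ has characteristic polynomial $\prod_i(\lambda-1\tens\varphi_i)$; and the exact functor $\DcaldR$ shows, exactly as in \eqref{eqRefinementHT1}, that for each $\tau\in\Sigma$ the jumps of $\Fil^\bullet\DdR(r)_\tau$ occur in the degrees indexed by the $h_{\tau,i}$, i.e.\ the multiset of $\tau$-Hodge--Tate weights of $r$ is $\{h_{\tau,i}\}_{1\le i\le n}$.

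For the last assertion, assume $\varphi_i\varphi_j^{-1}\notin\{1,p^{[K_0:\Q_p]}\}$ for $i\neq j$; then the $\varphi_i$ are pairwise distinct, so $\Fcal_\bullet\coloneqq(\Dcalcris(D_i))_{0\le i\le n}$ is a complete $\Phi$-stable flag on $\Dcris(r)$---a refinement realising the ordering $\underline{\varphi}$. Via Berger's equivalence between crystalline $(\varphi,\Gamma_K)$-modules over the Robba ring and filtered $\varphi$-modules (\cite{berger02}, \cite{berger08}), this refinement, together with the Hodge filtration on $\Dcris(r)$, gives rise---after saturating the associated chain of sub-$(\varphi,\Gamma_K)$-modules of $\Drig(r)$---to a triangulation of $\Drig(r)$ of parameter $z^{\hbold'}\nr(\underline{\varphi})$ for some $\hbold'$; this is the generalisation to $K$ of \cite[Prop.~2.4.1]{bellaicheChenevier}, contained in \cite{bhs1}, \cite{bhs3}. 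By the Hodge--Tate weight statement already established, $\hbold'$ is a rearrangement of $\hbold$; and for \emph{any} rearrangement $\hbold'$ of $\hbold$ one has $z^{\hbold'}\nr(\underline{\varphi})\in\Tnreg$: writing $\delta'_i(\delta'_j)^{-1}=z^{\hbold'_i-\hbold'_j}\nr(\varphi_i/\varphi_j)$ and restricting to $\intring_K^\times$, an identity $\delta'_i(\delta'_j)^{-1}=z^{-\kbold}$ or $=N_{K/\Q_p}(z)z^{\kbold}\abs{z}_K$ with $\kbold\in\Z_{\ge 0}^\Sigma$ would pin down $\hbold'_i-\hbold'_j$ and then, evaluating at a uniformiser of $K$, force $\varphi_i/\varphi_j=1$, respectively $\varphi_j/\varphi_i=p^{[K_0:\Q_p]}$---both excluded by hypothesis. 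Hence $(r,z^{\hbold'}\nr(\underline{\varphi}))\in\Utri$ by Definition~\ref{dfnTriangulineVariety}.
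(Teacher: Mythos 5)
Your opening input---that every point $(r,\underline{\delta})\in\Xtri$ has $\Drig(r)$ trianguline of parameter exactly $\underline{\delta}$---is false, and it is precisely where the content of the lemma lies. If it were true, then any $(r,\underline{\delta})\in\Xtri$ with $\underline{\delta}\in\Tnreg$ would, by Definition \ref{dfnTriangulineVariety}, lie in $\Utri$. But the companion points studied in this paper provide counterexamples: for $r$ crystalline generic and $w\in(\Scal_n)^\Sigma$ with $w_\sat\preceq w$, $w\neq w_\sat$, the point $x=(r,z^{w(\hbold)}\nr(\underline{\varphi}))$ lies in $\Xtri$, has $z^{w(\hbold)}\nr(\underline{\varphi})\in\Tnreg$ (exactly the computation you perform in your last paragraph), and yet does not lie in $\Utri$ by the uniqueness of $w_\sat$. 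So $\Drig(r)$ is \emph{not} trianguline of parameter $\underline{\delta}$ there. The global triangulation theorems of Kedlaya--Pottharst--Xiao and Liu give, after a proper birational modification of the ambient space, only a filtration of $\Drig(r)$ with graded pieces $\Robba_{k(x),K}(\delta_i z^{c_i})$ for some a priori uncontrolled $c_i\in\Z^\Sigma$. Running Berger's functors on such a filtration identifies the $\tau$-Hodge--Tate weights of $r$ with the multiset $\{\wt_\tau(\delta_i)+c_{\tau,i}\}_i$, not $\{\wt_\tau(\delta_i)\}_i$; without pinning down the $c_i$ you obtain neither the claimed shape $\underline{\delta}=z^{\hbold}\nr(\underline{\varphi})$ nor the stated equality of Hodge--Tate multisets.

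Note that the paper does not re-prove this lemma but cites \cite[Lemma 2.1]{bhs2}, whose argument is organised differently: one propagates Zariski-closed conditions (on the Sen characteristic polynomial, on algebraicity and integrality of the weights $\wt_\tau(\delta_i)$, and on the crystalline Frobenius eigenvalues via the locally free sheaf $\Dcal$ over the crystalline locus) from $\Utri$ to its Zariski closure, rather than assuming a triangulation of parameter $\underline{\delta}$ exists at $x$ itself. The parts of your write-up downstream of the shape of $\underline{\delta}$---reading off Frobenius eigenvalues and Hodge--Tate weights from an actual triangulation via $\Dcalcris$ and $\DcaldR$, and the final $\Tnreg$ verification---are sound in themselves, but the shape of $\underline{\delta}$ must be established by the closure argument, not by a triangulation at $x$.
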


\begin{proof}
This is \cite[Lemma 2.1]{bhs2}; the second part is a consequence of its proof. Note that necessarily $\enstq{h_{\tau,i}}{1\leq i\leq n}=\enstq{h'_{\tau,i}}{1\leq i\leq n}$ for all $\tau\in\Sigma$.
\end{proof}

\subsection{The patched eigenvariety}

For the rest of the paper, we make the assumption $p\nmid2n$ (which implies in particular $p>2$).

We consider the following global setting. Let $F$ be a CM field, \emph{i.e.\ }a quadratic totally imaginary extension of a totally real number field $F^+$, and write $S_p$ for the set of places of $F^+$ dividing $p$. We assume that $F/F^+$ is unramified in all finite places, and that all places $v\in S_p$ split in $F$ and satisfy $F^+_v\iso K$. Let $G$ be a unitary group in $n$ variables over $F^+$ quasi-split in all finite places of $F^+$, such that $G\times_{F^+}F\iso\GL_{n,F}$ and $G(F\tens_\Q\mathbb{R})$ is compact. Let $U^p=\prod_{v\notin S_p\text{ finite}}U_v\subset G(\mathbb{A}^{p\infty}_{F^+})$ be a tame level, where each $U_v$ is a compact subgroup of $F^+_v$ which is assumed to be hyperspecial whenever $v$ is inert. We then fix a finite set $S$ of finite places of $F^+$ containing $S_p$ and all the places $v$ such that $U_v$ is not hyperspecial; in particular all places $v\in S$ split in $F$. Let $\bar{\rho}\colon\G_{F^+}\to\GL_n(k_L)$ be a continuous irreducible representation which is automorphic (see for example \cite[Definition 5.3.1]{emertonGee14}). In particular, it follows from \cite[\S5.1, \S5.2, \S5.3]{emertonGee14} or \cite[\S2.1, \S2.3]{caraianiEtAl} that $\bar{\rho}$ \og{}comes from\fg{} $p$-adic automorphic forms on $G(\mathbb{A}_{F^+})$ of tame level $U^p$; see \cite[\S2.4]{bhs1} for a detailed exposition (see also the beginning of \cite[\S3.1, \S3.2]{bhs2}). Also, $\bar{\rho}\left(\Gal\left(\overline{F}/F(\zeta_p)\right)\right)$ is adequate in the sense of \cite[Def.\ 2.3]{thorne}. Finally, we assume that for each $v\in S_p$ there exists a place $\tilde{v}\mid v$ of $F$ such that $\bar{\rho}_{\tilde{v}}\coloneqq\bar{\rho}|_{\G_{F_{\tilde{v}}}}$ is isomorphic to $\bar{r}$ via $F_{\tilde{v}}\iso F^+_v\iso K$. We fix such a place $\tilde{v}$.

Enlarging $L$ if necessary, the existence of such a globalisation has been proved by Emerton-Gee using the Emerton-Gee stack \cite{emertonGee19}; see \cite[Theorem 1.2.2]{emertonGee19} and \cite[Corollary A.7]{emertonGee14}.

For $v\in S_p$, let $R^{\overline{\square}}_{\bar{\rho}_{\tilde{v}}}$ be the maximal reduced quotient of $R^{\square}_{\bar{\rho}_{\tilde{v}}}$ without $p$-torsion. By \cite[Lemma 7.1.4.a]{dejong}, the rigid analytic space $\Xfrak_{\bar{\rho}_{\tilde{v}}}\coloneqq\Spf(R^{\overline{\square}}_{\bar{\rho}_{\tilde{v}}})^\rig$ is reduced and has the same underlying topological space as $\Spf(R^{\square}_{\bar{\rho}_{\tilde{v}}})^\rig$. We also write $\Xfraknotp \coloneqq \prod_{v\in S\setminus S_p}\Xfrak_{\bar{\rho}_{\tilde{v}}}$ and $\Xfrakp \coloneqq \prod_{v\in S_p}\Xfrak_{\bar{\rho}_{\tilde{v}}}$.

Let $T_p\coloneqq\prod_{v\in S_p}T_v$ where $T_v\coloneqq(F_{\tilde{v}}^\times)^n$ is the $n$-dimensional torus over $F_{\tilde{v}}$. Then the continuous characters $T_{v}\to L^\times$ for each $v\in S_p$ (resp.\ $T_p\to L^\times$) are parametrised by a rigid analytic space $\widehat{T}_{v,L}$ (resp.\ $\widehat{T}_{p,L}\coloneqq\prod_{v\in S_p}\widehat{T}_{v,L}$) over $L$. Note that a character $T_v\to L^\times$ can be seen as a character $(K^\times)^n\to L^\times$ through $F_{\tilde{v}}\iso K$; this induces a rigid isomorphism $\widehat{T}_{v,L}\iso\Tcal_L^n$ over $L$. We define $\Xtriv$ in the same way as $\Xtri$ with $\bar{\rho}_{\tilde{v}}$ instead of $\bar{r}$ for each $v\in S_p$, and $\Xtrip\coloneqq\prod_{v\in S_p}\Xtriv$; again the isomorphisms $F_{\tilde{v}}\iso K$ induce an isomorphism $\Xtrip\iso(\Xtri)^{\abs{S_p}}$. Also, since $\Xtrip$ is by definition reduced, there is a Zariski-closed embedding
\[
	\Xtrip \inj \Xfrakp\times_L\widehat{T}_{p,L}
\]
of rigid analytic spaces over $L$.

We fix $g\in\Z_{\geq 1}$, and we let $R_\infty\coloneqq R^{\overline{\square}}_{\bar{\rho}_S}\llbracket t_1,\ldots,t_g\rrbracket$, where $R^{\overline{\square}}_{\bar{\rho}_S}\coloneqq\widehat{\bigotimes}_{v\in S}R^{\overline{\square}}_{\bar{\rho}_{\tilde{v}}}$, and $\Xfrak_\infty\coloneqq\Spf(R_\infty)^\rig$. Then, by \cite[Proposition 7.2.4.g]{dejong}, we have
\[
	\Xfrak_\infty = \Xfraknotp \times_L \Xfrakp \times_L \mathbb{U}^g
\]
where $\mathbb{U}\coloneqq\Spf(\intring_L\llbracket t\rrbracket)^\rig$ is the unit open disk over $L$. By adapting the patching construction of Caraiani-Emerton-Gee-Geraghty-Pa{\v{s}}k{\=u}nas-Shin \cite[\S2]{caraianiEtAl}, Breuil-Hellmann-Schraen constructed for some $g\in\Z_{\geq 1}$ an $R_\infty$-representation $\Pi_\infty$ of $G_p\coloneqq\prod_{v\in S_p}G(F_{\tilde{v}})$ over $L$ called the \og{}patched representation\fg{}, satisfying different properties for which we refer to \cite[Théorème 3.5]{bhs1}. They then consider the subspace $\Pi_\infty^{R_\infty-\mathrm{an}}$ of locally $R_\infty$-analytic vectors (see \cite[Définition 3.2]{bhs1}), and apply Emerton's Jacquet module functor $J_{B_p}$ (see \cite[Definition 3.4.5]{emerton06}) associated to $B_p\coloneqq\prod_{v\in S_p}B_{\tilde{v}}$, where $B_{\tilde{v}}$ is the Borel subgroup of upper triangular matrices in $\GL_n(F_{\tilde{v}})$. This gives a coherent $\func_{\Xfrak_\infty\times_L\widehat{T}_{p,L}}$-module $\mathcal{M}_\infty$ whose module of global sections $\Gamma(\Xfrak_\infty\times_L\widehat{T}_{p,L},\mathcal{M}_\infty)$ is isomorphic to $J_{B_p}(\Pi_\infty^{R_\infty-\mathrm{an}})'$, where $(-)'$ is the continuous dual.

\begin{dfn}[Breuil-Hellmann-Schraen] \label{dfnPatchedEigenvariety}
The \emph{patched eigenvariety} $\Xrho$ is the support of $\mathcal{M}_\infty$ (in the sense of \cite[\S9.5.2]{bgr}), which is an analytic subset of $\Xfrak_\infty\times_L\widehat{T}_{p,L}$ (see Proposition 4 of \emph{ibid.}), equipped with its structure of reduced rigid analytic space over $L$. For more details about this construction, we refer to \cite[\S3.2, Corollaire 3.20]{bhs1} where $\Xrho$ is called the \emph{Hecke-Taylor-Wiles variety}.
\end{dfn}

We next recall the relation between $\Xrho$ and $\Xtrip$. For each $v\in S_p$, define a character $\underline{\delta}_{B_v}=(\abs{\cdot}_{F_{\tilde{v}}}^{n+1-2i})_{1\leq i\leq n}\in\widehat{T}_{v,L}$, where $\abs{\cdot}_{F_{\tilde{v}}}$ is normalised as usual by $\abs{p}_{F_{\tilde{v}}}=p^{-[F_{\tilde{v}}:\Q_p]}$, and an automorphism $\iota_v$ of $\widehat{T}_{v,L}$ as follows:
\[
	\iota_v(\underline{\delta}) \coloneqq \underline{\delta}_{B_v}\cdot(\delta_i\cdot(\chi\circ\theta_{F_{\tilde{v}}}^{-1})^{i-1})_{1\leq i\leq n}
\]
where $\chi$ is the cyclotomic character and $\theta_{F_{\tilde{v}}}\colon W_{F_{\tilde{v}}}^\mathrm{ab}\isoto F_{\tilde{v}}^\times$ is the local reciprocity isomorphism normalised so that geometric Frobeniuses are sent to uniformisers. The inverse of the automorphism
\[
	\appl{\iota}{\Xfrak_\infty\times_L\widehat{T}_{p,L}}{\Xfrak_\infty\times_L\widehat{T}_{p,L}}{\left(y,(\delta_v)_{v\in S_p}\right)}{\left(y,(\iota_v(\delta_v))_{v\in S_p}\right)}
\]
induces a Zariski-closed embedding
\[
	\iota^{-1} \colon \Xrho \inj \Xfraknotp\times_L\left(\Xfrakp\times_L\widehat{T}_{p,L}\right)\times_L\times\mathbb{U}^g \,.
\]
According to \cite[Théorème 3.21]{bhs1}, $\iota^{-1}$ induces an isomorphism of rigid analytic spaces over $L$ between $\Xrho$ and a union of irreducible components of $\Xfraknotp\times_L\Xtrip\times_L\mathbb{U}^g$. Such irreducible components are of the form $\Xfrak^p\times_LZ\times_L\mathbb{U}^g$, where $\Xfrak^p$ is an irreducible component of $\Xfraknotp$ and $Z$ is an irreducible component of $\Xtrip$. Therefore, we have an isomorphism
\begin{equation} \label{eqPatchedToXtri}
	\iota^{-1} \colon \Xrho \isoto \bigcup_{\Xfrak^p\text{ irr.\ comp.\ of }\Xfraknotp}\left(\Xfrak^p\times_L\Xaut\times_L\mathbb{U}^g\right)
\end{equation}
where, for each $\Xfrak^p$, $\Xaut$ is a union of irreducible components of $\Xtrip$.

The following is \cite[Conjecture 3.23]{bhs1}.

\begin{conj} \label{conjBreuilMezard}
For any irreducible component $\Xfrak^p$ of $\Xfraknotp$, the rigid analytic subvariety $\Xaut\subseteq\Xtrip$ is equal to $\Xtriptilde\coloneqq\prod_{v\in S_p}\Xtrivtilde$, where $\Xtrivtilde$ is the union of irreducible components $Z$ of $\Xtriv$ such that $Z\inter\Utriv$ contains a crystalline point. In particular, $\Xaut$ does not depend on $\Xfrak^p$.
\end{conj}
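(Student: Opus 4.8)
The plan is to recognise that this statement is a geometric incarnation of the Breuil-Mézard conjecture and to reduce it to that. Concretely, one proves the two inclusions $\Xaut\subseteq\Xtriptilde$ and $\Xtriptilde\subseteq\Xaut$ for the fixed global data; once both are established, the asserted independence of $\Xaut$ from the component $\Xfrak^p$ follows formally, since both sides then equal $\Xtriptilde$, whose definition involves no global data.

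\emph{The inclusion $\Xaut\subseteq\Xtriptilde$.} Since $\Xtrip=\prod_{v\in S_p}\Xtriv$, every irreducible component of $\Xtrip$ — in particular every one appearing in $\Xaut$ — is a product $\prod_{v\in S_p}Z_v$ of irreducible components $Z_v$ of $\Xtriv$, so it suffices to produce on each such $Z_v$ a crystalline point lying in $\Utriv$. I would deduce this from the Zariski-density in $\Xrho$ of the classical (locally algebraic, strictly dominant) points, which follows from the description of $\Xrho$ as the support of $\mathcal{M}_\infty=J_{B_p}(\Pi_\infty^{R_\infty-\mathrm{an}})'$ together with the accumulation and density of classical points in Emerton's eigenvariety formalism, as used in \cite[\S3.2]{bhs1}. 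Under $\iota^{-1}$ and \eqref{eqPatchedToXtri}, any classical point of $\Xrho$ lands in some $\Xfrak^p\times_L\Xaut\times_L\mathbb{U}^g$ whose $\Xtrip$-coordinate is a strictly dominant crystalline point of $\prod_{v\in S_p}\Utriv$; hence every irreducible component of $\Xaut$ carries such a point and lies in $\Xtriptilde$.

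\emph{The inclusion $\Xtriptilde\subseteq\Xaut$, and the main obstacle.} Fix $v\in S_p$ and an irreducible component $Z$ of $\Xtriv$ such that $Z\inter\Utriv$ contains a crystalline point $x=(r,\underline{\delta})$; the goal is $Z\subseteq\Xaut$. As $\Utriv$ is smooth over $\Q_p$ (Theorem \ref{thmXtriGeometry}), $x$ is a smooth point of $\Xtriv$, lying on $Z$ alone, and $\Xaut$ is a union of irreducible components of $\Xtrip$; so it is enough to prove $x\in\Xaut$, and by the smoothness of $\omega$ on $\Utriv$ and the companion-point results of \cite{bhs2}, \cite{bhs3} (a component carrying a crystalline point carries a strictly dominant one) one reduces to $x$ strictly dominant, i.e.\ classical. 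By Lemma \ref{lemXtriPoints}, $r$ is then a crystalline framed deformation of $\bar{\rho}_{\tilde{v}}\iso\bar{r}$ with regular Hodge-Tate weights $\hbold$, and $x\in\Xaut$ as soon as $r$ is automorphic: it then arises from a classical eigenform on $G(\mathbb{A}_{F^+})$ whose refinement matches $\underline{\delta}$, the Jacquet module of the unramified principal series at the places of $S_p$ realising every ordering of the Satake parameter. The remaining input — that every crystalline framed deformation of $\bar{r}$ of weights $\hbold$ is automorphic, equivalently that the patched module has full support on the reduced crystalline framed deformation ring of $\bar{r}$ of weights $\hbold$ — is, after reduction modulo $p$ and comparison of Hilbert-Samuel multiplicities with those of the mod $p$ reduction of the corresponding locally algebraic types, exactly the geometric Breuil-Mézard conjecture for $\bar{r}$. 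This is where I expect the real difficulty to lie: it is a theorem for $\bar{r}$ generic, for small $n$, and in various further cases via the modularity-lifting and local-global compatibility methods underlying \cite{emertonGee19} and \cite{caraianiEtAl}, but not in general — which is precisely why Conjecture \ref{conjBreuilMezard} must be assumed.
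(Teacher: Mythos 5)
This statement is labelled a \emph{conjecture} in the paper — it is \cite[Conjecture 3.23]{bhs1}, restated verbatim — and the paper offers no proof of it. It is assumed as a hypothesis (via Hypothesis \ref{hypComesFromPatched}, which it implies) throughout the rest of the argument. So there is no ``paper's own proof'' against which to compare your attempt, and indeed no proof is currently known in the generality stated.

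That said, your analysis of the structure of the conjecture is sound and essentially matches what Breuil--Hellmann--Schraen themselves say about it. The inclusion $\Xaut\subseteq\Xtriptilde$ is the accessible direction: it follows from Zariski-density of classical points on the patched eigenvariety and the fact that such points map to strictly dominant crystalline points of $\prod_v\Utriv$, so every irreducible component of $\Xaut$ meets a crystalline locus (this is proved in \cite{bhs1}). You correctly identify the reverse inclusion $\Xtriptilde\subseteq\Xaut$ as the genuine obstruction and correctly reduce it — via smoothness of $\Utriv$, reduction to a strictly dominant crystalline point, and the need for that deformation to be automorphic with full Hecke support — to a statement of Breuil--Mézard type: that the patched module has full support on (a suitable quotient of) the crystalline framed deformation ring, i.e.\ a comparison of cycles or Hilbert--Samuel multiplicities. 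This is exactly the content of \cite[Conjecture 3.23]{bhs1} and the reason it remains open in general. In short: your write-up is not a proof (nor could it be), but your reduction and your identification of the gap are accurate, and your concluding sentence acknowledging this is the correct assessment.
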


\subsection{Strong linkage and companion points}

We recall a tool from \cite{bhs2} to find points on $\Xrho$ (so-called \emph{companion points}), which is related to the notion of strong linkage of characters which we now explain.

Let $\gfrak$ be a Lie algebra over a field $k$ of characteristic $0$, together with a Cartan subalgebra $\tfrak\subseteq\gfrak$. Let $\Phi\subseteq\tfrak^*\coloneqq\Hom_k(\tfrak,\overline{k})$ be the root system of $(\gfrak,\tfrak)$ (where $\overline{k}$ is an algebraic closure of $k$), let $W\coloneqq W(\gfrak,\tfrak)$ be the Weyl group which acts on $\tfrak^*$ and $\Phi$. Let $E\subset\tfrak^*$ be the $\Q$-span of $\Phi$ and equip $E$ with its nondegenerate bilinear form $(\cdot\mid\cdot)$ invariant by $W$ (see \cite[Chap.\ VI, \S1.1, Prop.\ 3]{bourbakiLie456}). Choose a basis $I$ of $\Phi$ and call its elements \emph{simple roots}; this choice induces a partial order on $\tfrak^*$ by setting $\mu\leq\lambda$ if and only if $\lambda-\mu$ is a $\Z_{\geq0}$-linear combination of simple roots. For each root $\alpha\in\Phi$, let $\alpha^\vee\coloneqq\frac{2\alpha}{(\alpha\mid\alpha)}$ be the coroot associated to $\alpha$. Then $\enstq{\alpha^\vee}{\alpha\in I}$ is a basis of $E$; it has an orthogonal dual under $(\cdot\mid\cdot)$, whose elements we call \emph{fundamental weights}. We call \emph{special weight} the sum of the fundamental weights; it is an element $\varpi\in E$ that satisfies in particular $s_\alpha\varpi=\varpi-\alpha$ for any simple root $\alpha\in I$, where $s_\alpha\in W$ is the reflection associated to $\alpha$, defined by $s_\alpha\colon\lambda\mapsto\lambda-(\lambda\mid\alpha^\vee)\alpha$. Note that $\varpi$ is usually noted $\rho$, \emph{e.g.\ }\cite{BGG}, but we wish to avoid confusion with global representations.

\begin{dfn} \label{dfnDotAction}
The \emph{dot action} is the action of $W$ on $\tfrak^*$ defined by
\[
	w\dotaction\lambda\coloneqq w(\lambda+\varpi)-\varpi
\]
for all $w\in W$ and $\lambda\in\tfrak^*$.

Let $\lambda,\mu\in\tfrak^*$. We say that $\mu$ is \emph{strongly linked} to $\lambda$, and write $\mu\uparrow\lambda$, if there exist roots $\alpha_1,\ldots,\alpha_r\in\Phi$ for some $r\in\Z_{\leq0}$ such that $\mu=(s_{\alpha_r}\ldots s_{\alpha_1})\dotaction\lambda$ and $(s_{\alpha_{i+1}}\ldots s_{\alpha_1})\dotaction\lambda<(s_{\alpha_i}\ldots s_{\alpha_1})\dotaction\lambda$ for all $1\leq i\leq r-1$.
\end{dfn}

We apply this, for each place $v\in S_p$, to the context of the split reductive group $G_{v,L}\coloneqq\left(\Res_{F_{\tilde{v}}/\Q_p}\GL_{n,F_{\tilde{v}}}\right)\times_{\Q_p}L\iso\prod_{\tau\in\Sigma_v}\GL_{n,L}$ over $L$ (where $\Sigma_v\coloneqq\Hom_{\Q_p}(F_{\tilde{v}},L)\iso\Sigma$), with Lie algebra $\gfrak_{v,L}=\prod_{\tau\in\Sigma}\gl_{n,L}$ and Cartan algebra $\tfrak_{v,L}=\prod_{\tau\in\Sigma_v}\tfrak_L$ (with $\tfrak_L\iso L^n$). Then we have $\tfrak_{v,L}^*\iso(\overline{L}^n)^{\Sigma_v}$ as vector spaces over $\overline{L}$, and the Weyl group $W=(\Scal_n)^{\Sigma_v}$ acts on $\tfrak_{v,L}^*$ in the obvious way. We choose the basis $I\subset\Phi$ corresponding to the Borel subgroup of upper triangular matrices; the simple roots are the $\alpha=(\alpha_{\tau',i'})_{\tau'\in\Sigma_v,1\leq i'\leq n}\in\tfrak_{v,L}^*$ such that, for some $\tau\in\Sigma_v$ and $1\leq i\leq n-1$, we have $\alpha_{\tau,i}=-\alpha_{\tau,i+1}=1$ and $\alpha_{\tau',i'}=0$ for all $\tau'\neq\tau$ and $i'\notin\{i,i+1\}$. Then the special weight is $\varpi = \left(\frac{n+1}{2}-i\right)_{\tau\in\Sigma_v,1\leq i\leq n}\in\tfrak_{v,L}^*$, and the dot action expresses as
\[
	w\dotaction\lambda = \left( \lambda_{\tau,w_\tau^{-1}(i)} + i - w_\tau^{-1}(i) \right)_{\tau\in\Sigma_v,1\leq i\leq n}
\]
for $w=(w_\tau)_{\tau\in\Sigma_v}\in W$ and $\lambda=(\lambda_{\tau,i})_{\tau\in\Sigma_v,1\leq i\leq n}\in\tfrak_{v,L}^*$.

If $\underline{\delta}_v=(\delta_{v,i})_{1\leq i\leq n}\colon T_{v,L}\to L'^\times$ is a continuous character, for some place $v\in S_p$ and some finite extension $L'$ of $L$, then one can see $\diff\underline{\delta}_v=(\wt_\tau(\delta_{v,i}))_{\tau\in\Sigma_v,1\leq i\leq n}$ as an element of $\tfrak_{v,L}^*$.

\begin{dfn} \label{dfnStrongLinkage}
Let $\underline{\delta}=(\underline{\delta}_v)_{v\in S_p},\underline{\epsilon}=(\underline{\epsilon}_v)_{v\in S_p}\in\widehat{T}_{p,L}$. We say that $\underline{\epsilon}$ is \emph{strongly linked} to $\underline{\delta}$, if $\diff\underline{\epsilon}_v\uparrow\diff\underline{\delta}_v$ in $\tfrak_{v,L}^*$ for all $v\in S_p$, and if $\underline{\epsilon}\underline{\delta}^{-1}$ is algebraic in the sense that for all $v\in S_p$ there exists $\kbold_v\in\Z^{\Sigma_v}$ such that $\underline{\epsilon}_v\underline{\delta}_v^{-1}=z^{\kbold_v}$. In this case we write $\underline{\epsilon}\uparrow\underline{\delta}$.
\end{dfn}

\begin{thm}[Breuil-Hellmann-Schraen] \label{thmStronglinkagePatched}
Let $x=(y,\underline{\delta})\in\Xfrak_\infty\times_L\widehat{T}_{p,L}$ be a point lying in $\Xrho$, and let $\underline{\epsilon}\in\widehat{T}_{p,L}$ be a character satisfying $\underline{\epsilon}\uparrow\underline{\delta}$. Then the point $x'=(y,\underline{\epsilon})\in\Xfrak_\infty\times_L\widehat{T}_{p,L}$ also lies in $\Xrho$.
\end{thm}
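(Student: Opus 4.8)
Since Theorem~\ref{thmStronglinkagePatched} is exactly the tool borrowed from \cite{bhs2}, the natural plan is to recall the shape of the argument there, which proceeds by translating membership in $\Xrho$ into a statement about the locally analytic $G_p$-representation $\Pi_\infty^{R_\infty-\mathrm{an}}$ and propagating a Hecke eigenvector along an elementary strong linkage move. First I would reduce to that elementary case. Unwinding Definitions~\ref{dfnDotAction} and~\ref{dfnStrongLinkage}, $\underline{\epsilon}\uparrow\underline{\delta}$ means that for each $v\in S_p$ one passes from $\diff\underline{\delta}_v$ to $\diff\underline{\epsilon}_v$ by a finite chain of dot-action reflections strictly decreasing the weight for the root order; since one such step shifts the weight by a positive integral multiple of a simple root, the intermediate characters lift to locally analytic characters of $T_p$ with unchanged smooth part. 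Hence one may assume $\underline{\epsilon}$ differs from $\underline{\delta}$ only at one place $v_0$, by a single simple reflection $\diff\underline{\epsilon}_{v_0}=s_\alpha\dotaction\diff\underline{\delta}_{v_0}<\diff\underline{\delta}_{v_0}$.

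Next I would record the two ``interface'' facts with the patched construction. On one hand, as $\Xrho$ is the support of $\mathcal{M}_\infty$ and its global sections are $J_{B_p}(\Pi_\infty^{R_\infty-\mathrm{an}})'$, the standard dictionary of \cite[\S3]{bhs1} says that $(y,\underline{\delta})$ lies on $\Xrho$ if and only if the localization at $y$ of the Jacquet module $J_{B_p}(\Pi_\infty^{R_\infty-\mathrm{an}})$ contains a nonzero $T_p$-eigenvector of eigencharacter $\underline{\delta}$. On the other hand, $\Pi_\infty^{R_\infty-\mathrm{an}}$ (localized at $y$) is a very strongly admissible locally analytic $G_p$-representation, so Emerton's adjunction for the Jacquet functor \cite{emerton06}, in the form refined by Breuil and Orlik--Strauch so as to apply in the non-dominant range, translates the existence of such an eigenvector into the non-vanishing of $\mathrm{Hom}_{G_p}\bigl(\Pi(\underline{\delta}),\Pi_\infty^{R_\infty-\mathrm{an}}[y]\bigr)$, where $\Pi(\underline{\delta})=\mathcal{F}_{\overline{B}_p}^{G_p}(M_{\underline{\delta}},\underline{\delta}_{\mathrm{sm}})$ is the Orlik--Strauch representation attached to a (dual) Verma module $M_{\underline{\delta}}$ over $\prod_{v\in S_p}\gfrak_{v,L}$ of highest weight a fixed affine transform of $\diff\underline{\delta}$, to the opposite Borel $\overline{B}_p$, and to the smooth part $\underline{\delta}_{\mathrm{sm}}$ of $\underline{\delta}$.

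The engine is then the classical strong linkage principle. For the elementary move above, the Bernstein--Gelfand--Gelfand embedding theorem provides a nonzero $\gfrak_{v_0,L}$-homomorphism between the Verma modules attached to $\diff\underline{\delta}_{v_0}$ and $\diff\underline{\epsilon}_{v_0}$ (for a single simple reflection this is the elementary $\mathfrak{sl}_2$-computation); tensoring over the remaining places and applying the contravariant exact Orlik--Strauch functor $\mathcal{F}_{\overline{B}_p}^{G_p}(-,\underline{\delta}_{\mathrm{sm}})$ yields a canonical $G_p$-equivariant map relating $\Pi(\underline{\delta})$ and $\Pi(\underline{\epsilon})$, noting that $\underline{\delta}_{\mathrm{sm}}=\underline{\epsilon}_{\mathrm{sm}}$ since the two characters differ by an algebraic one. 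Combined with the given nonzero map into $\Pi_\infty^{R_\infty-\mathrm{an}}[y]$, this produces a nonzero $G_p$-equivariant map out of $\Pi(\underline{\epsilon})$; applying the adjunction of the previous paragraph in reverse shows that $\underline{\epsilon}$ occurs in the localized Jacquet module, i.e.\ $(y,\underline{\epsilon})\in\Xrho$. Iterating over the strong linkage chain then finishes the proof.

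The main obstacle lies in the last paragraph. One must pin down the normalizations precisely---the affine transform $\diff\underline{\delta}\rightsquigarrow M_{\underline{\delta}}$, the choice of $\overline{B}_p$ versus $B_p$, the twist by $\delta_{B_p}$ in the adjunction, and whether a Verma or a dual Verma is used---so that ``strongly linked below'' matches the correct direction of the BGG-induced map; and, relatedly, one must ensure the composite map does not vanish. This non-degeneracy is the genuine point: it rests on the structural input of Orlik--Strauch and Breuil on these locally analytic representations (irreducibility of $\mathcal{F}_{\overline{B}_p}^{G_p}(L(\mu),\pi)$ for irreducible $\pi$, simplicity of the socle and cosocle of the relevant $\mathcal{F}_{\overline{B}_p}^{G_p}(M(\mu)^\vee,\pi)$, behaviour under the contravariant duality on category~$\mathcal{O}$), together with the fact that $\diff\underline{\delta}_{v_0}$ and $\diff\underline{\epsilon}_{v_0}$ lie in one linkage class, which for a single simple reflection can be verified directly on the rank-one Levi attached to $\alpha$.
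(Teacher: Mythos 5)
The paper's ``proof'' of this result is the one-line citation to \cite[Theorem 5.5]{bhs2} and formula (3.10) of \emph{loc.\ cit.}, so there is no in-paper argument to compare against; what you have done is reconstruct the argument of \cite{bhs2}, and the outline is broadly faithful: translate membership of $(y,\underline{\delta})$ in $\Xrho$ into a $T_p$-eigenvector in $J_{B_p}(\Pi_\infty^{R_\infty-\mathrm{an}})$, convert via the adjunction formula into a nonzero $G_p$-equivariant map from an Orlik--Strauch representation into $\Pi_\infty^{R_\infty-\mathrm{an}}$, and propagate along the strong linkage chain by applying the Orlik--Strauch functor to a BGG embedding of Verma modules.

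Two corrections are in order. First, Definition~\ref{dfnStrongLinkage} (via Definition~\ref{dfnDotAction}) allows arbitrary roots $\alpha_i\in\Phi$ in the linkage chain, not merely simple ones, and $s_\alpha\dotaction\lambda-\lambda$ is an integer multiple of $\alpha$ itself, not of a simple root. The reduction to a single root step and a single place is still legitimate (one processes the roots one at a time, noting that the intermediate characters are again strongly linked and lift to $\widehat{T}_{p,L}$ since each shift is algebraic), but the elementary $\mathfrak{sl}_2$-computation you invoke only covers the simple case; for a general positive root one needs Verma's theorem, i.e.\ a single step of BGG. Second, you single out the nonvanishing of the composite $\Pi(\underline{\epsilon})\to\Pi(\underline{\delta})\to\Pi_\infty^{R_\infty-\mathrm{an}}$ as ``the genuine point'', requiring socle/cosocle and irreducibility results for Orlik--Strauch representations. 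In fact, once the normalizations (opposite Borel, sign of the weight, Verma versus dual Verma, twist by $\delta_{B_p}$) are fixed so that the arrows line up, the nonvanishing is essentially automatic: the Orlik--Strauch functor is contravariant and exact in its category-$\mathcal{O}$ argument, hence turns the BGG \emph{inclusion} of Verma modules into a (topological) \emph{surjection} of the corresponding locally analytic representations, and a surjection followed by a nonzero map is nonzero. The finer socle/cosocle structure you cite is genuinely needed elsewhere in \cite{bhs2} (companion constituents, classicality), but not for the bare existence statement of Theorem~\ref{thmStronglinkagePatched}.
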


\begin{proof}
This is \cite[Theorem 5.5]{bhs2}; see (3.10) of \emph{loc.\ cit}.
\end{proof}

\section{Tangent spaces on the trianguline variety}

In this section, we prove the main result of the paper: Theorem \ref{thmMain}. It computes the dimension of $\Xtri$ at crystalline points satisfying certain technical conditions, generalising \cite[Corollary 5.17]{bhs2} and \cite[Proposition 4.1.5]{bhs3} (see \cite[Remark 4.1.6(iii)]{bhs3} for the latter).

We keep the notation of \S\ref{secVarieteTrianguline}.

\subsection{Crystalline generic points and pairs of permutations} \label{ssecCrystallineGenericPoints}

We fix a crystalline point $x=(r,\underline{\delta})\in\Xtri$ and assume that $x$ is \emph{generic} in the sense of \cite{bhs3}, \emph{i.e.\ }$r$ is regular and $\varphi_i\varphi_j^{-1}\notin\{1,p^{[K_0:\Q_p]}\}$ for $i\neq j$, where the $1\tens\varphi_i\in K_0\tens_{\Q_p}k(x)$ are the eigenvalues of the linearised Frobenius $\Phi$ on $\Dcris(r)$. We write $\hbold=(h_{\tau,i})_{\tau\in\Sigma,1\leq i\leq n}$ for its Hodge-Tate weights, ordered such that $h_{\tau,1}<\ldots<h_{\tau,n}$ for all $\tau\in\Sigma$.

By Lemma \ref{lemXtriPoints}, we can attach two elements $w,w_\sat\in\Scal_n^\Sigma$ such that (i) the character $\underline{\delta}$ is of the form $\underline{\delta}=z^{w(\hbold)}\nr(\underline{\varphi})$, where $\underline{\varphi}=(\varphi_i)_{1\leq i\leq n}$ and $w(\hbold)\coloneqq(h_{\tau,w_\tau^{-1}(i)})_{\tau,i}\in(\Z^\Sigma)^n$, and (ii) the point $x_\sat\coloneqq(r,\underline{\delta}_\sat)\in\Xfrakr\times_L\Tcal_L^n$ defined by $\underline{\delta}_\sat=z^{w_\sat(\hbold)}\nr(\underline{\varphi})$ lies in $\Utri$.

Remember that $(\Scal_n)^\Sigma$ is the Weyl group of the root system $\coprod_{\tau\in\Sigma}\Phi$ attached to the split reductive group $\left(\prod_{\tau\in\Sigma}G,\prod_{\tau\in\Sigma}T\right)$ over $\Q_p$ where $G\coloneqq\GL_{n,\Q_p}$ and $T\subseteq G$ is the torus of diagonal matrices. Writing $X(T)\coloneqq\Hom(T,\Gm)$ for the $\Z$-module of algebraic characters of $T$, one has $\Phi\subset X(T)$ and the canonical isomorphism $X(T)\iso\Z^n$ induces $\Phi\iso\enstq{e_i-e_j}{1\leq i,j\leq n\;,\;i\neq j}$ where $(e_i)_{1\leq i\leq n}$ is the standard basis of $\Z^n$. For $w'\in(\Scal_n)^\Sigma$, we denote by $d_{w'}$ the rank of the $\Z$-submodule of $X\left(\prod_{\tau\in\Sigma}T\right)$ generated by $\enstq{w'(\alpha)-\alpha}{\alpha\in\coprod_{\tau\in\Sigma}\Phi}$. We choose the standard basis $I\coloneqq\enstq{e_i-e_{i+1}}{1\leq i\leq n-1}$ of $\Phi$ corresponding to the Borel subgroup $B\subseteq G$ of upper triangular matrices. It induces on $\Scal_n$ a length function $\lg$ and a partial Bruhat order $\preceq$; these extend naturally to a length function and a Bruhat order on $(\Scal_n)^\Sigma$, still denoted $\lg$ and $\preceq$, which can also be defined directly by choosing the corresponding standard basis $\coprod_{\tau\in\Sigma}I$ of the root system $\coprod_{\tau\in\Sigma}\Phi$. By \cite[Theorem 4.2.3]{bhs3}, we have $w_\sat\preceq w$.

\begin{dfn} \label{dfnGoodPair}
Let $(w_1,w_2)\in(\Scal_n)^2$ such that $w_1\preceq w_2$. We say that $(w_1,w_2)$ is a \emph{good pair} if there exists a sequence $(\alpha_1,\ldots,\alpha_r)$ of roots of the form $e_i-e_j$, where $i,j\in\{1,\ldots,n\}$ are in the same orbit under the action of $w_1w_2^{-1}$, such that $w_2=s_{\alpha_r}s_{\alpha_{r-1}}\ldots s_{\alpha_1}w_1$ and $s_{\alpha_i}\ldots s_{\alpha_1}w_1\succ s_{\alpha_{i-1}}\ldots s_{\alpha_1}w_1$ for all $1\leq i\leq r$.

For $(w_1,w_2)\in\left((\Scal_n)^\Sigma\right)^2$ such that $w_1\preceq w_2$, we say that $(w_1,w_2)$ is a \emph{good pair} if and only if $(w_{1,\tau},w_{2,\tau})\in(\Scal_n)^2$ is a good pair for all $\tau\in\Sigma$.
\end{dfn}

\begin{ex} \label{exGoodPairs}
For $n\leq4$, all pairs are good except for $(w_1,w_2)\in(\Scal_4)^2$ defined by $(w_1(1),w_1(2),w_1(3),w_1(4))=(1,3,2,4)$ and $(w_2(1),w_2(2),w_2(3),w_2(4))=(4,2,3,1)$. In fact, we show in a subsequent paper \cite[Theorem 2.4.10]{chap245} a result relating good pairs to pattern avoidance, which implies in particular that ``most'' pairs are good.
\end{ex}

From now on we assume the following:

\begin{hyp} \label{hypComesFromPatched}
With the notation of \S\ref{secVarieteTrianguline}, there exists an irreducible component $\Xfrak^p$ of $\Xfraknotp$ such that $(x_{\sat,v})_{v\in S_p}\in\Xaut$, where $x_{\sat,v}\in\Xtriv$ corresponds to $x_\sat\in\Xtri$ under $\bar{\rho}_{\tilde{v}}\iso\bar{r}$ for each $v\in S_p$.
\end{hyp}

Note that Hypothesis \ref{hypComesFromPatched} is implied by Conjecture \ref{conjBreuilMezard} since $x_{\sat,v}\in\Xtrivtilde$ by definition.

\begin{thm} \label{thmMain}
Assume Hypothesis \ref{hypComesFromPatched}. Let $x\in\Xtri$ be crystalline generic, such that the associated pair $(w_\sat,w)$ in $(\Scal_n)^\Sigma$ is good in the sense of Definition \ref{dfnGoodPair}. Then
\[
	\dim_{k(x)}T_{\Xtri,x} = \dim\Xtri - d_{ww_\sat^{-1}} + \sum_{\tau\in\Sigma}\dim_{\Q_p}T_{\overline{(Bw_\tau B/B)},w_{\sat,\tau}B} - \length(w_\sat)
\]
where $d_{ww_\sat^{-1}}$ is as defined in the paragraph preceding Definition \ref{dfnGoodPair}.
\end{thm}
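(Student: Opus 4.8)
The plan is to follow the strategy outlined in the introduction, computing $\dim_{k(x)}T_{\Xtri,x}$ by exhibiting two subspaces $T_1,T_2\subseteq T_{\Xtri,x}$ whose sum fills out the tangent space, and then using $\dim(T_1+T_2)=\dim T_1+\dim T_2-\dim(T_1\cap T_2)$. First I would set up the two auxiliary spaces: the crystalline deformation space $\Wcrtildew{w}$ with refinement, which maps to $\Xtri$ via the closed immersion $\iota_{\hbold,w}$, and the translate $\jmathauto\big(\overline{U_{x_\sat}\times_{\Wcal^n_L}\Wcal^n_{w,w_\sat,\hbold,L}}\big)$ of a neighbourhood of $x_\sat$ in $\Utri$. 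The existence of the latter as a Zariski-closed embedding into $\Xtri$ is where Hypothesis \ref{hypComesFromPatched} (hence Conjecture \ref{conjBreuilMezard}) enters: one transports $x_\sat$ to $x$ through the patched eigenvariety $\Xrho$ using the strong linkage Theorem \ref{thmStronglinkagePatched}, applied to the character $z^{w(\hbold)-w_\sat(\hbold)}$, whose strong-linkage hypothesis is guaranteed by goodness of $(w_\sat,w)$ via the translation into the dot action recorded before Definition \ref{dfnGoodPair}. Set $T_1$ and $T_2$ to be the images in $T_{\Xtri,x}$.

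Next I would prove that $T_1+T_2$ exhausts $T_{\Xtri,x}$. This is the structural input of \cite{bhs2}: near a crystalline point, every tangent direction either preserves the condition ``de Rham with fixed Hodge--Tate weights'' (and is thus captured by $T_1$, deformations in $\Wcrtildew{w}$) or moves the weight in the direction cut out by the equations defining $\Wcal^n_{w,w_\sat,\hbold,L}$ (and is thus captured by $T_2$); the overlap is the intersection. I would invoke the corresponding statement from \cite{bhs2} adapted via Hypothesis \ref{hypComesFromPatched}, so that $\dim_{k(x)}T_{\Xtri,x}=\dim T_1+\dim T_2-\dim(T_1\cap T_2)$. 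The computation $\dim T_2=\dim\Xtri-d_{ww_\sat^{-1}}$ is straightforward: $U_{x_\sat}$ is smooth of dimension $\dim\Xtri$ by Theorem \ref{thmXtriGeometry}\ref{itemUtriSmooth}, and the fibre product with $\Wcal^n_{w,w_\sat,\hbold,L}$ imposes exactly $d_{ww_\sat^{-1}}$ independent linear conditions on the tangent space because the weight map $\omega$ is smooth on $\Utri$ and the defining equations of $\Wcal^n_{w,w_\sat,\hbold,L}$ have derivative of rank $d_{ww_\sat^{-1}}$ by the very definition of that integer as the rank of the lattice spanned by the $ww_\sat^{-1}(\alpha)-\alpha$.

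The heart of the argument is the identification of $T_1\cap T_2$ with $T_{\Wcrtildew{w_\sat},(r,\underline{\varphi})}$, for which I would use Proposition \ref{propLikeBellaicheChenevier} with $A=k(x)[\varepsilon]/(\varepsilon^2)$. A first-order deformation lying in $T_1\cap T_2$ is a deformation $r_\varepsilon$ which is crystalline with Hodge--Tate weights $\hbold$, refined, lands in the Schubert cell for $w$, and is also trianguline of parameter obtained by translating the saturated parameter; Proposition \ref{propLikeBellaicheChenevier} forces its refined Hodge flag to lie in the cell for $w_\sat$, i.e.\ it actually lies in $\Wcrtildew{w_\sat}$. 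Conversely any tangent vector of $\Wcrtildew{w_\sat}$ maps into both $T_1$ (via $\diff\iota_{\hbold,w}$ factoring through the closed immersion $\Wcrtildew{w_\sat}\hookrightarrow\Wcrtildew{w}$) and $T_2$ (via $\diff\iota_{\hbold,w_\sat}$ composed with $\diff\jmathauto$), which is precisely the commutativity of the displayed square; Proposition \ref{propLikeBellaicheChenevier} upgrades this to a cartesian square. Hence $\dim(T_1\cap T_2)=\dim T_{\Wcrtildew{w_\sat},(r,\underline{\varphi})}$. Finally I would compute $\dim T_1=\dim T_{\Wcrtildew{w},(r,\underline{\varphi})}$ and $\dim T_{\Wcrtildew{w_\sat},(r,\underline{\varphi})}$ using the smoothness of $h\colon\Wcrtilde\to\prod_\tau(G/B)^\rig$: since $\Wcrtildew{w'}=h^{-1}(\prod_\tau\overline{Bw'_\tau B/B}^\rig)$ and $h$ is smooth of relative dimension equal to $\dim\Wcrtilde-\sum_\tau\dim G/B$, one gets $\dim T_{\Wcrtildew{w'},(r,\underline{\varphi})}=\dim\Wcrtilde-\sum_\tau\big(\dim G/B-\dim_{\Q_p}T_{\overline{Bw'_\tau B/B},w'_{\sat,\tau}B}\big)$ after noting that $h(r,\underline{\varphi})$ is the point $w_{\sat,\tau}B$ of the Schubert variety (this is exactly where the saturated Weyl element records the position of the Hodge filtration relative to the refinement, by Proposition \ref{propLikeBellaicheChenevier} applied at $x_\sat\in\Utri$). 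Combining the three dimension counts, with $\dim\Wcrtilde$ expressed in terms of $\dim\Xtri$ and $\length(w_\sat)$ via $\iota_{\hbold,w_0}$ as in \cite{bhs3}, collapses to \eqref{eqDimensionIntro}. The main obstacle I anticipate is the careful bookkeeping that $h(r,\underline{\varphi})=(w_{\sat,\tau}B)_\tau$ and that the square is genuinely cartesian at the level of artinian points — i.e.\ extracting from Proposition \ref{propLikeBellaicheChenevier} precisely the statement that a first-order trianguline deformation with the translated saturated parameter must have its refined Hodge flag in the $w_\sat$-cell rather than merely in its closure.
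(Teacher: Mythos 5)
Your proposal follows the paper's construction of $T_1$ and $T_2$, and the identification of $T_1\cap T_2$ with $T_{\Wcrtildew{w_\sat},(r,\underline{\varphi})}$ via Proposition \ref{propLikeBellaicheChenevier} applied to $A=k(x)[\varepsilon]/(\varepsilon^2)$ is exactly the paper's key step. However, there is a genuine gap in the way you close the argument.

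You claim that $T_1+T_2$ exhausts $T_{\Xtri,x}$, writing that ``near a crystalline point, every tangent direction either preserves the condition `de Rham with fixed Hodge--Tate weights' $\ldots$ or moves the weight in the direction cut out by the equations defining $\Wcal^n_{w,w_\sat,\hbold,L}$'', and attributing this to a ``structural input of \cite{bhs2}''. No such dichotomy exists, and \cite{bhs2} contains no such statement: a general first-order deformation of $(r,\underline{\delta})$ need not split into a crystalline part and a weight-preserving trianguline part, and the trianguline variety is not smooth at $x$ so its tangent space is not controlled by any such decomposition. The paper's proof instead establishes only the \emph{inequality} $\dim_{k(x)}T_{\Xtri,x}\geq\dim_{k(x)}(T_1+T_2)$, which together with \eqref{eqDim2}--\eqref{eqDim4} gives the lower bound, and then cites the \emph{separate} upper bound $\dim_{k(x)}T_{\Xtri,x}\leq\dim\Xtri-d_{ww_\sat^{-1}}+\sum_\tau\dim T_{\overline{Bw_\tau B/B},x_\tau}-\length(w_\sat)$ from \cite[Theorem 4.1.5(ii)]{bhs3}, a geometric result proved by entirely different (local-model) methods and with no modularity assumption. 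Equality of $T_1+T_2$ with $T_{\Xtri,x}$ is then a \emph{consequence} of the theorem, not an input to it, and asserting it up front without an independent argument is circular. You would need to replace that step with an invocation of the \cite{bhs3} upper bound.

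A smaller imprecision: you assert $h(r,\underline{\varphi})=(w_{\sat,\tau}B)_\tau$, but $h(r,\underline{\varphi})$ is merely a point \emph{in the cell} $\prod_\tau Bw_{\sat,\tau}B/B$, not the base point; the formula survives because $\dim T_{\overline{Bw_\tau B/B},y_\tau}$ is constant as $y_\tau$ ranges over $Bw_{\sat,\tau}B/B$, a point the paper makes explicit at the start of its proof. You should say this rather than pinning the image down to $w_{\sat,\tau}B$.
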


The rest of the section is devoted to the proof of Theorem \ref{thmMain}. We use a similar strategy to \cite{bhs2}, as follows. We construct two subspaces of $T_{\Xtri,x}$, one roughly corresponding to ``crystalline tangent directions'' and the other to ``trianguline tangent directions'', and compute the dimension of the sum of these subspaces: this bounds $\dim_{k(x)}T_{\Xtri,x}$ from below. The upper bound is proved in \cite[Theorem 4.1.5(ii)]{bhs3}.

\subsection{Crystalline tangent directions}

Following the exposition of \cite[\S2.2]{bhs2} and \cite[\S4.2]{bhs3}, we recall the construction of a rigid analytic space over $L$ parametrising crystalline deformations of given Hodge-Tate weights together with an ordering of the eigenvalues of the Frobenius.

Let $\Rcr$ be the quotient of $\Rr$ representing framed deformations of $\bar{r}$ which are crystalline of Hodge-Tate weights $\hbold$ (its existence is guaranteed by \cite[Corollary 2.6.2]{kisin08}; it is reduced and flat over $\Z_p$). Namely, for any complete local noetherian algebra $A$ over $\intring_L$ with residue field $k_L$, a morphism $\Rr\to A$ factors through $\Rcr$ if and only if the corresponding deformation $r\colon\G_K\to\GL_n(A)$ is crystalline with Hodge-Tate weights $\hbold$. Write $\Xfrakcr \coloneqq \Spf(\Rcr)^\rig$ for the associated rigid analytic space over $L$. Arguing as in \cite[Proposition 2.3.5]{kisin09} similarly to the case of $\Xfrakr$ (see \S\ref{secVarieteTrianguline}), we see that for any local artinian algebra $A$ over $L$, the set of $A$-points $x\in\Xfrakcr(A)$ is the set of points $x\in\Xfrakr(A)$ such that the associated representation $r_x\colon\G_K\to\GL_n(A)$ is crystalline with Hodge-Tate weights $\hbold$.

According to \cite[Theorem 2.5.5]{kisin08} (see also \cite[Corollaire 6.3.3]{bergerColmez}), there is a coherent locally free module $\Dcal$ over $K_0\tens_{\Q_p}\func_{\Xfrakcr}$ equipped with a $\varphi$-semilinear automorphism $\varphi_\cris$ such that, for all $x\in\Xfrakcr$ corresponding to a deformation $r_x\colon\G_K\to\GL_n(k(x))$, there is an isomorphism
\[
	\Dcal\tens_{\func_{\Xfrakcr}}k(x) \iso \Dcris(r_x)
\]
such that $\varphi_\cris\tens_{\func_{\Xfrakcr}}k(x)$ induces the $\varphi$-action on $\Dcris(r_x)$. Furthermore, for each $\tau\in\Hom_{\Q_p}(K_0,L)$, the linearised Frobenius $\Phi_\cris\coloneqq(\varphi_\cris)^{[K_0:\Q_p]}$ induces an invertible linear action on $\Dcris(r_x)\tens_{K_0,\tau}L$ which is independent, up to isomorphism, of the choice of $\tau$. Therefore, taking the coefficients of the characteristic polynomial of this action gives a morphism of rigid analytic spaces over $L$
\begin{equation} \label{eqXfrakcrCharPoly}
	\Xfrakcr \to \left((\Ga^\rig)^{n-1}\times\Gm^\rig\right)\times_{\Q_p}L
\end{equation}
where the image $(a_{n-1},\ldots,a_1,a)\in k(x)^{n-1}\times k(x)^\times$ of a point $x\in\Xfrakcr$ is such that the Frobenius on $\Dcris(r_x)$ has eigenvalues $1\tens\varphi_i\in K_0\tens_{\Q_p}k(x)$, where the $\varphi_i$ for $1\leq i\leq n$ are the roots of the polynomial $t^n+a_{n-1}t^{n-1}+\ldots+a_1t+a$. See also \cite[\S2.2]{bhs2}.

As in the paragraph following Definition \ref{dfnFlagModules}, let $G\coloneqq\GL_{n,\Q_p}$, $B\subset G$ be the Borel of upper triangular matrices, $T\subset B$ be the maximal torus inside $B$; the Weyl group of $(G,T)$ is the symmetric group $\Scal_n$. The isomorphism $L[e_1,\ldots,e_n,f]/(e_nf-1)\isoto L[t_1,\ldots,t_n,u]^{\Scal_n}/(u\prod t_i-1)$ of the fundamental theorem of symmetric polynomials gives an isomorphism of schemes
\[
	T_L/\Scal^n \isoto \left(\Ga^{n-1}\times\Gm\right)\times_{\Q_p}L
\]
where the left-hand side parametrises a polynomial by its roots and the right-hand side parametrises it by its coefficients. Therefore \eqref{eqXfrakcrCharPoly} is a morphism $\Xfrakcr\to T_L^\rig/\Scal^n$ of rigid analytic spaces over $L$, which we use to define
\[
	\Xfrakcrtilde \coloneqq \Xfrakcr\times_{T_L^\rig/\Scal^n}T_L^\rig
\]
which is a reduced rigid analytic space over $L$ by \cite[Lemma 2.2]{bhs2}.

From the proof of \cite[Theorem 4.2.3]{bhs3} (see also \cite[Lemma 2.4]{bhs2}), the set
\[
	\Wcrtilde \coloneqq \enstq{(r',\underline{\varphi'})\in\Xfrakcrtilde}{\varphi'_i{\varphi'_j}^{-1}\notin\{1,p^{[K_0:\Q_p]}\}\;\forall\;i\neq j}
\]
is a Zariski-open dense subset of $\Xfrakcrtilde$, and there is a smooth morphism
\begin{equation} \label{eqDfnh}
	h \colon \Wcrtilde \to \left(\Res_{K/\Q_p}(G/B)_K^\rig\right)\times_{\Q_p}L \iso \prod_{\tau\in\Sigma} (G/B)_L^\rig
\end{equation}
of rigid analytic spaces over $L$ sending a point $(r',\underline{\varphi'})\in\Wcrtilde$ to the position of the Hodge flag $\Fil_\bullet\DdR(r')$ relative to the refinement on $K\tens_{K_0}\Dcris(r')$ induced by the ordering $\underline{\varphi'}$ (see the discussion following Definition \ref{dfnRefinement}). Note that we use the notation $r'$ and $\underline{\varphi'}$ to avoid confusion with $r$ and $\underline{\varphi}$ which are defined in \S\ref{ssecCrystallineGenericPoints}.

Fix a permutation $w=(w_\tau)\in\Scal_n^\Sigma$ and define the analytic subset
\[
	\Wcrtildew{w} \coloneqq h^{-1}\left(\prod_{\tau\in\Sigma}(\overline{Bw_\tau B/B})_L^\rig\right) \subseteq \Wcrtilde
\]
of $\Wcrtilde$ (note that it is written $\overline{\Wcrtildew{w}}$ in \cite[\S4.2]{bhs3}; we drop the bar for clarity).

\begin{prop}[Breuil-Hellmann-Schraen] \label{propIotahw}
The morphism
\[
	\appl{\iota_{\hbold,w}}{\Wcrtildew{w}}{\Xfrakr\times_L\Tcal_L^n}{\left(r',\underline{\varphi'}\right)}{\left(r',z^{w(\hbold)}\nr(\underline{\varphi'})\right)}
\]
 of rigid analytic spaces over $L$ (where we recall that $w(\hbold)\coloneqq(h_{\tau,w_\tau^{-1}(i)})_{\tau,i}\in(\Z^\Sigma)^n$) is a closed immersion and factors through the inclusion $\Xtri\subset\Xfrakr\times_L\Tcal_L^n$.
\end{prop}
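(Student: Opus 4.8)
The plan is to establish the two assertions of Proposition \ref{propIotahw} separately: first that $\iota_{\hbold,w}$ is a closed immersion, and then that it factors through $\Xtri\subseteq\Xfrakr\times_L\Tcal_L^n$.

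For the closed immersion, I would exploit the explicit description of the source and target. The map $\Wcrtildew{w}\to\Xfrakr\times_L\Tcal_L^n$ is the composite of $\Wcrtildew{w}\hookrightarrow\Xfrakcrtilde=\Xfrakcr\times_{T_L^\rig/\Scal^n}T_L^\rig$ followed by a map to $\Xfrakr\times_L\Tcal_L^n$. The first arrow is a closed immersion since $\Wcrtildew{w}$ is cut out in the Zariski-open $\Wcrtilde\subseteq\Xfrakcrtilde$ as a preimage under $h$ of a Zariski-closed subset; more care is needed because $\Wcrtilde$ is merely open in $\Xfrakcrtilde$, but one checks that $\iota_{\hbold,w}$ has closed image because $\Xfrakcr\hookrightarrow\Xfrakr$ is a closed immersion (the crystalline-of-fixed-Hodge-Tate-weights condition being a closed condition on the framed deformation variety, via \cite[Corollary 2.6.2]{kisin08}) and the $T_L^\rig$-coordinate is recorded faithfully by the character $\nr(\underline{\varphi'})$. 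Concretely: the first projection $\Xfrakr\times_L\Tcal_L^n\to\Xfrakr$ composed with $\iota_{\hbold,w}$ is the composite $\Wcrtildew{w}\hookrightarrow\Xfrakcr\hookrightarrow\Xfrakr$, a locally closed immersion; and the remaining $\Tcal_L^n$-coordinate $z^{w(\hbold)}\nr(\underline{\varphi'})$ determines $\underline{\varphi'}\in T_L^\rig$ since $w(\hbold)$ is a fixed tuple of integers (so $z^{w(\hbold)}$ is a fixed character), hence the map separates points and is an immersion onto its image; one then checks the image is closed by noting that $\Xfrakcrtilde\to\Xfrakr\times_L\Tcal_L^n$ sending $(r',\underline{\varphi'})$ to $(r',z^{w(\hbold)}\nr(\underline{\varphi'}))$ lands in the closed subspace where the triple of characteristic-polynomial coefficients of $\Phi_\cris$ matches those of the $n$-tuple $(\varphi'_i)$, and $\Wcrtildew{w}$ is closed in $\Wcrtilde$.

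For the factorisation through $\Xtri$, the strategy is to use that $\Xtri$ is the \emph{Zariski-closure} of $\Utri$, so it suffices to produce a Zariski-dense subset of $\Wcrtildew{w}$ whose image under $\iota_{\hbold,w}$ lies in $\Utri$. The natural candidate is the locus of points $(r',\underline{\varphi'})$ such that $r'$ is genuinely trianguline of parameter $z^{w(\hbold)}\nr(\underline{\varphi'})$. Here is where Proposition \ref{propLikeBellaicheChenevier} (or rather its converse direction, together with the theory of Kedlaya-Pottharst-Xiao triangulations of crystalline $(\varphi,\Gamma_K)$-modules) enters: given a regular crystalline $r'$ with a refinement $\underline{\varphi'}$ positioned in the closed Schubert cell $\prod_\tau\overline{Bw_\tau B/B}$ — which is exactly the defining condition of $\Wcrtildew{w}$ — one builds the associated triangulation of $\Drig(r')$ via Berger's dictionary and reads off that its parameter is $z^{w(\hbold)}\nr(\underline{\varphi'})$, at least when the refinement lands in the \emph{open} cell $\prod_\tau Bw_\tau B/B$ and the genericity condition $\varphi'_i{\varphi'_j}^{-1}\notin\{1,p^{[K_0:\Q_p]}\}$ holds (so that the parameter is $\Tnreg$-valued). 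The preimage $h^{-1}(\prod_\tau Bw_\tau B/B)$ of the open cell inside $\Wcrtildew{w}$ is Zariski-open; the point is that it is also Zariski-\emph{dense} in $\Wcrtildew{w}$, since $h$ is smooth (hence open and flat) and the open cell is dense in $\overline{Bw_\tau B/B}$. On this dense open locus $\iota_{\hbold,w}$ lands in $\Utri$, and therefore $\iota_{\hbold,w}(\Wcrtildew{w})\subseteq\overline{\Utri}=\Xtri$ by continuity of $\iota_{\hbold,w}$ for the Zariski topology. (Both assertions are also proved in \cite[\S4.2]{bhs3}, so one may alternatively simply cite that source; but the argument above is self-contained given the results recalled in the excerpt.)

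The main obstacle is the density statement — carefully checking that $h^{-1}\big(\prod_{\tau}Bw_\tau B/B\big)$ is Zariski-dense in $\Wcrtildew{w}=h^{-1}\big(\prod_\tau\overline{Bw_\tau B/B}\big)$. This requires that $h$ be flat (smoothness gives this) and that $\Wcrtildew{w}$ be reduced with no component lying entirely over the boundary $\prod_\tau\overline{Bw_\tau B/B}\setminus\prod_\tau Bw_\tau B/B$; since smooth morphisms are open and send generic points to generic points, the preimage of a dense open is dense, but one must be slightly careful that the relevant spaces are nonempty and equidimensional in the expected way, which follows from \cite[Theorem 4.2.3]{bhs3}. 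A secondary technical point is confirming that the map $\iota_{\hbold,w}$ is a \emph{closed} immersion and not merely a locally closed one: this uses that the eigenvalue-ordering data is recorded losslessly in the target torus $\Tcal_L^n$ via $\nr(\underline{\varphi'})$ and that the characteristic-polynomial-matching equations cut out $\Xfrakcrtilde$ as a closed subspace of $\Xfrakcr\times_L\Tcal_L^n$, the latter being closed in $\Xfrakr\times_L\Tcal_L^n$.
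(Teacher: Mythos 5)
The paper's own proof of this proposition is a one-line citation to \cite[Theorem 4.2.3]{bhs3}, so your proposal supplies an argument the paper does not spell out. Your treatment of the factorisation through $\Xtri$ is sound in outline: the density of $h^{-1}\bigl(\prod_\tau Bw_\tau B/B\bigr)$ in $\Wcrtildew{w}$ does follow from smoothness (hence flatness and openness) of $h$ together with the Schubert stratification, and the observation that on this dense open locus the Berger--Bellaïche--Chenevier dictionary produces a triangulation of parameter $z^{w(\hbold)}\nr(\underline{\varphi'})$, landing in $\Utri$ thanks to the genericity constraint built into $\Wcrtilde$, is the right mechanism.

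However, the closed-immersion assertion has a genuine gap. Your chain of inclusions establishes only that $\iota_{\hbold,w}$ is a \emph{locally} closed immersion: $\Wcrtildew{w}$ is Zariski-closed in $\Wcrtilde$, which is only Zariski-open in $\Xfrakcrtilde$, which is in turn closed in $\Xfrakr\times_L\Tcal^n_L$. You flag this (``more care is needed because $\Wcrtilde$ is merely open in $\Xfrakcrtilde$''), but the cure you offer --- that the characteristic-polynomial equations cut out $\Xfrakcrtilde$ as closed in $\Xfrakr\times_L\Tcal^n_L$ and that $\Wcrtildew{w}$ is closed in $\Wcrtilde$ --- is exactly the locally-closed statement restated, not a resolution; passing through the Zariski-open $\Wcrtilde$ still loses closedness. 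The non-formal content of \cite[Theorem 4.2.3]{bhs3} is precisely the step showing the image is actually closed inside $\Xtri$, and that input is not reproduced here. Relatedly, you single out ``the density statement'' as the main obstacle, but that part is routine once smoothness of $h$ is granted; the delicate point is the closedness of the image, and your write-up under-weights it.
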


\begin{proof}
See the proof of \cite[Theorem 4.2.3]{bhs3}.
\end{proof}

Note that $\iota_{\hbold,w}(r,\underline{\varphi})=x$, therefore $\iota_{\hbold,w}$ induces a morphism $T_{\Wcrtildew{w},(r,\underline{\varphi})}\inj T_{\Xtri,x}$.

\subsection{Trianguline tangent directions}

We generalise a construction of \cite[\S5.2]{bhs2}

Write $\Wcal^n_{w,w_\sat,\hbold,L}\subset\Wcal^n_L$ for the analytic subset of characters $\underline{\eta}=(\eta_1,\ldots,\eta_n)$ defined by the equations:
\[
	\wt_\tau\big(\eta_{w_{\sat,\tau}(i)}\eta_{w_\tau(i)}^{-1}\big) = h_{\tau,i}-h_{\tau,w_{\sat,\tau}^{-1}w_\tau(i)} \,, \quad 1\leq i\leq n \,, \quad \tau\in\Sigma
\]
with its structure of reduced rigid analytic space over $L$. Such a character $\underline{\eta}$ is of the form $z^{w_\sat(\hbold)}\underline{\eta}'$ where $\underline{\eta}'=(\eta'_1,\dots,\eta'_n)\in\Wcal_L^n$ satisfies $\wt_\tau\big(\eta'_{w_\tau w_{\sat,\tau}^{-1}(i)}\big)=\wt_\tau(\eta'_i)$ for all $\tau,i$. In particular, this is the case of $\underline{\delta}_\sat$.

We define an automorphism $\jmathauto\colon\Tcal^n_L\isoto\Tcal^n_L$ by
\[
	\jmathauto(\underline{\eta}) \coloneqq z^{w(\hbold)-w_\sat(\hbold)}\underline{\eta}
\]
and still write $\jmathauto$ for the automorphism $\id_{\Xfrakr}\tens\jmathauto\colon\Xfrakr\times_L\Tcal^n_L\isoto\Xfrakr\times_L\Tcal^n_L$.

By \cite[Corollary 3.7.10]{bhs3}, the rigid variety $\Xtri$ is locally irreducible at $x_\sat$ so, with the notation of Hypothesis \ref{hypComesFromPatched}, we can identify $\Xaut$ to $\Xtrip$ locally at $(x_{\sat,v})_{v\in S_p}$. Therefore, there exists a Zariski-open neighbourhood
\[
	U_{x_\sat}\subseteq\Utri
\]
of $x_\sat$ such that $\prod_{v\in S_p}U_{x_{\sat,v}}\subseteq\Xaut$, where each $U_{x_{\sat,v}}\subset\Xtriv$ corresponds to $U_{x_\sat}\subset\Xtri$ under $\bar{\rho}_{\tilde{v}}\iso\bar{r}$.

Since $U_{x_\sat}$ is a Zariski-open subset of $\Utri$, by Theorem \ref{thmXtriGeometry} and by base change, $U_{x_\sat}\times_{\Wcal_L^n}\Wcal^n_{w,w_\sat,\hbold,L}$ is smooth over $\Wcal^n_{w,w_\sat,\hbold,L}$ hence reduced, and it is a Zariski-open subset of $\Xtri\times_{\Wcal^n_L}\Wcal^n_{w,w_\sat,\hbold,L}$. Consider its Zariski-closure $\overline{U_{x_\sat}\times_{\Wcal_L^n}\Wcal^n_{w,w_\sat,\hbold,L}}$, with its structure of reduced rigid analytic space over $L$; it fits in a chain of Zariski-closed embeddings
\begin{multline*}
	\overline{U_{x_\sat}\times_{\Wcal^n_L}\Wcal^n_{w,w_\sat,\hbold,L}} \inj (\Xtri\times_{\Wcal^n_L}\Wcal^n_{w,w_\sat,\hbold,L})^\red \\
	\inj \Xtri\times_{\Wcal^n_L}\Wcal^n_{w,w_\sat,\hbold,L} \inj \Xtri \subseteq \Xfrakr\times_L\Tcal^n_L
\end{multline*}
and the automorphism $\jmathauto\colon\Xfrakr\times_L\Tcal^n_L\isoto\Xfrakr\times_L\Tcal^n_L$ induces a Zariski-closed embedding
\begin{equation} \label{eqJmath}
	\jmathauto \colon \overline{U_{x_\sat}\times_{\Wcal^n_L}\Wcal^n_{w,w_\sat,\hbold,L}} \inj \Xfrakr\times_L\Tcal^n_L \,.
\end{equation}

\begin{prop} \label{propDimMinusD}
We have
\[
	\dim_{k(x)}T_{U_{x_\sat}\times_{\Wcal^n_L}\Wcal^n_{w,w_\sat,\hbold,L},x_\sat}=\dim{\Xtri}-d_{ww_\sat^{-1}}
\]
(see the paragraph preceding Definition \ref{dfnGoodPair} for the definition of $d_{ww_\sat^{-1}}$).
\end{prop}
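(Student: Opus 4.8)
The plan is to deduce the statement from a tangent-space computation on $\Wcal^n_{w,w_\sat,\hbold,L}$ together with one combinatorial identity. First I would use Theorem~\ref{thmXtriGeometry}: $\Utri$ is smooth over $\Q_p$ and $\omega|_{\Utri}$ is a smooth morphism, so the same holds for $\omega|_{U_{x_\sat}}$ since $U_{x_\sat}$ is Zariski-open in $\Utri$. Hence $\diff\omega\colon T_{U_{x_\sat},x_\sat}\to T_{\Wcal^n_L,\omega(x_\sat)}$ is surjective, $\dim_{k(x)}T_{U_{x_\sat},x_\sat}=\dim_{x_\sat}\Utri=\dim\Xtri$, and $\dim_{k(x)}T_{\Wcal^n_L,\omega(x_\sat)}=\dim\Wcal^n_L=n[K:\Q_p]$, using that the weight space $\Wcal_L$ is smooth of dimension $[K:\Q_p]$. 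Since the tangent space of a fibre product of rigid spaces over $L$ is the fibre product of the tangent spaces, and $\diff\omega$ is surjective, this gives
\[
	\dim_{k(x)}T_{U_{x_\sat}\times_{\Wcal^n_L}\Wcal^n_{w,w_\sat,\hbold,L},\,x_\sat} = \dim\Xtri - n[K:\Q_p] + \dim_{k(x)}T_{\Wcal^n_{w,w_\sat,\hbold,L},\,\omega(x_\sat)} \,,
\]
so the task reduces to proving $\dim_{k(x)}T_{\Wcal^n_{w,w_\sat,\hbold,L},\,\omega(x_\sat)} = n[K:\Q_p] - d_{ww_\sat^{-1}}$.

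For this I would exploit that the weight morphism $\Wcal^n_L\to(\Ga^\rig_L)^{n[K:\Q_p]}$, $\underline{\eta}\mapsto(\wt_\tau(\eta_i))_{\tau\in\Sigma,\,1\le i\le n}$, is étale: it is the $n$-th power of $\Wcal_L\to(\Ga^\rig_L)^{[K:\Q_p]}$, which in suitable coordinates on $\Wcal_L$ is given by the $p$-adic logarithm. By additivity of each $\wt_\tau$, the subvariety $\Wcal^n_{w,w_\sat,\hbold,L}$ is the preimage under this morphism of the affine-linear subspace $V\subseteq(\Ga^\rig_L)^{n[K:\Q_p]}$ cut out, in the coordinates $X_{\tau,j}$, by $X_{\tau,w_{\sat,\tau}(i)}-X_{\tau,w_\tau(i)}=h_{\tau,i}-h_{\tau,w_{\sat,\tau}^{-1}w_\tau(i)}$ for $1\le i\le n$, $\tau\in\Sigma$. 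The image of $\omega(x_\sat)$ lies in $V$ — the $\tau$-weights of $\underline{\delta}_\sat=z^{w_\sat(\hbold)}\nr(\underline{\varphi})$ being $\bigl(h_{\tau,w_{\sat,\tau}^{-1}(j)}\bigr)_j$ — so $V\neq\emptyset$. Being étale over the smooth affine variety $V$, the space $\Wcal^n_{w,w_\sat,\hbold,L}$ is smooth over $L$, and in particular its tangent-space dimension at $\omega(x_\sat)$ equals $\dim V = n[K:\Q_p]-\operatorname{codim} V$.

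It then remains to check $\operatorname{codim} V = d_{ww_\sat^{-1}}$, which is the combinatorial core of the argument. Fixing $\tau\in\Sigma$ and reindexing by $j=w_{\sat,\tau}(i)$, the $\tau$-block of defining equations becomes $X_{\tau,j}-X_{\tau,\sigma_\tau(j)}=(\text{const})$ for $1\le j\le n$, with $\sigma_\tau\coloneqq w_\tau w_{\sat,\tau}^{-1}$; the kernel of its linear part consists of the vectors constant along the cycles of $\sigma_\tau$, so this block has rank $n-c(\sigma_\tau)$, where $c(\sigma_\tau)$ is the number of cycles of $\sigma_\tau$, and as the blocks for different $\tau$ involve disjoint coordinates, $\operatorname{codim} V=\sum_{\tau\in\Sigma}\bigl(n-c(\sigma_\tau)\bigr)$. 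On the other side $d_{ww_\sat^{-1}}=\sum_{\tau\in\Sigma}d_{\sigma_\tau}$, and for $\sigma\in\Scal_n$ the lattice $\langle\sigma(e_i-e_j)-(e_i-e_j)\rangle_{i\neq j}=\langle g_i-g_j\rangle_{i\neq j}$ with $g_i\coloneqq e_{\sigma(i)}-e_i$ has, because $\sum_i g_i=0$, the same rank as $\langle g_1,\dots,g_n\rangle=(\sigma-1)\Z^n$, namely $n-\dim\ker(\sigma-1)=n-c(\sigma)$. Hence $d_{ww_\sat^{-1}}=\sum_{\tau\in\Sigma}\bigl(n-c(\sigma_\tau)\bigr)=\operatorname{codim} V$, which finishes the proof.

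I expect the main difficulty to be organisational rather than conceptual: the two cycle-counts above — one for the rank of the linear system defining $\Wcal^n_{w,w_\sat,\hbold,L}$, one for the rank of the root-difference lattice defining $d_{ww_\sat^{-1}}$ — must be set up so that they visibly yield the same quantity, and the geometric input (étaleness of the weight morphism, which lets one transport dimensions and smoothness between $\Wcal^n_{w,w_\sat,\hbold,L}$ and its affine image $V$) should be invoked with some care, though it is standard.
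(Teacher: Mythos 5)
Your proof is correct, and as far as one can tell it follows essentially the same approach as the paper's, whose own proof is just a pointer to [BHS2, Proposition 5.15]: use smoothness of $\omega|_{\Utri}$ (Theorem \ref{thmXtriGeometry}) to turn the fibre-product tangent space into a dimension count on $\Wcal^n_{w,w_\sat,\hbold,L}$, transport that count along the étale weight map to the linear system on $(\Ga^\rig_L)^{n[K:\Q_p]}$, and finally match its rank with $d_{ww_\sat^{-1}}$ via the identity $d_\sigma = n - c(\sigma)$ (rank of $(\sigma-1)$ on $\Z^n$), summed over $\tau\in\Sigma$.
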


\begin{proof}
This is the same as \cite[Proposition 5.15]{bhs2}.
\end{proof}

The following result is adapted from \cite[Proposition 5.9]{bhs2}.

\begin{prop} \label{propClosedEmbedding}
Assume that the pair $(w_\sat,w)$ in $(\Scal_n)^\Sigma$ is good in the sense of Definition \ref{dfnGoodPair}. Then \eqref{eqJmath} induces a Zariski-closed embedding of reduced rigid analytic spaces over $L$
\begin{equation} \label{closedEmbedding}
	\jmathauto \colon \overline{U_{x_\sat}\times_{\Wcal^n_L}\Wcal^n_{w,w_\sat,\hbold,L}} \inj \Xtri \,.
\end{equation}
\end{prop}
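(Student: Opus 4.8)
The plan is to reduce the statement to a pointwise assertion and then feed it to the patched eigenvariety through the strong linkage theorem, as in \cite[Prop.~5.9]{bhs2} but keeping track of the two permutations $w_\sat\preceq w$.

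First I would note that \eqref{eqJmath} already realizes $\jmathauto$ as a Zariski-closed embedding of the reduced space $\overline{U_{x_\sat}\times_{\Wcal^n_L}\Wcal^n_{w,w_\sat,\hbold,L}}$ into $\Xfrakr\times_L\Tcal^n_L$, and $\Xtri$ is a reduced Zariski-closed subspace of $\Xfrakr\times_L\Tcal^n_L$; hence it is enough to show that the image of $\jmathauto$ is contained set-theoretically in $\Xtri$. Since $\jmathauto$ is an automorphism of $\Xfrakr\times_L\Tcal^n_L$ it commutes with Zariski closure, and $\Xtri$ is closed, so --- the classical points of $U_{x_\sat}\times_{\Wcal^n_L}\Wcal^n_{w,w_\sat,\hbold,L}$ being Zariski-dense in its Zariski closure --- it suffices to prove $\jmathauto(y)\in\Xtri$ for every classical point $y=(r',\underline{\delta}')$ of $U_{x_\sat}\times_{\Wcal^n_L}\Wcal^n_{w,w_\sat,\hbold,L}$.

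Fix such a $y$. Then $r'$ is trianguline of parameter $\underline{\delta}'=z^{w_\sat(\hbold)}\underline{\eta}'$ with the weights of $\underline{\eta}'$ constant on the orbits of $w_\tau w_{\sat,\tau}^{-1}$, so $\jmathauto(y)=(r',z^{w(\hbold)}\underline{\eta}')$. Under $\bar{\rho}_{\tilde{v}}\iso\bar{r}$, the point $y$ gives points $y_v\in U_{x_{\sat,v}}$, whence $(y_v)_{v\in S_p}\in\prod_{v\in S_p}U_{x_{\sat,v}}\subseteq\Xaut$ by Hypothesis \ref{hypComesFromPatched} and the choice of $U_{x_\sat}$. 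Choosing a point of $\Xfrak^p$ and of $\mathbb{U}^g$ and applying \eqref{eqPatchedToXtri}, I obtain a point of $\Xrho$ with $\Xfrakp$-component $(r')_v$ and $\widehat{T}_{p,L}$-component $\big(\iota_v(\underline{\delta}')\big)_v$. The crux is then to show that $\underline{\epsilon}\coloneqq\big(\iota_v(z^{w(\hbold)-w_\sat(\hbold)}\underline{\delta}')\big)_v$ satisfies $\underline{\epsilon}\uparrow\big(\iota_v(\underline{\delta}')\big)_v$ in the sense of Definition \ref{dfnStrongLinkage}: algebraicity of $\underline{\epsilon}\cdot\big(\iota_v(\underline{\delta}')\big)_v^{-1}=\big(z^{w(\hbold)-w_\sat(\hbold)}\big)_v$ is immediate, and a direct computation with the explicit formula for $\iota_v$ puts $\diff\iota_v(z^{w(\hbold)}\underline{\eta}'_v)$ and $\diff\iota_v(z^{w_\sat(\hbold)}\underline{\eta}'_v)$ in a common dot-orbit, equal to $w\dotaction\nu_v$ and $w_\sat\dotaction\nu_v$ respectively --- here one uses both the $(i-1)$-twist in $\iota_v$ and the weight equations cutting out $\Wcal^n_{w,w_\sat,\hbold,L}$, i.e. the equality of the weights of $\underline{\eta}'$ at the indices $w_\tau(j)$ and $w_{\sat,\tau}(j)$, which lie in a single $w_\tau w_{\sat,\tau}^{-1}$-orbit. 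Granting $\underline{\epsilon}\uparrow\big(\iota_v(\underline{\delta}')\big)_v$, Theorem \ref{thmStronglinkagePatched} produces a point of $\Xrho$ with the same $\Xfrak_\infty$-component and $\widehat{T}_{p,L}$-component $\underline{\epsilon}$; applying \eqref{eqPatchedToXtri} in reverse and unwinding $\iota^{-1}$ and $\iota_v$ identifies its $\Xtriv$-component with $\big(r',z^{w(\hbold)-w_\sat(\hbold)}\underline{\delta}'\big)$, i.e. with $\jmathauto(y)$, so $\jmathauto(y)\in\Xtri$.

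The remaining point --- establishing the strong linkage $w\dotaction\nu_v\uparrow w_\sat\dotaction\nu_v$ --- is where the good pair hypothesis enters and is, I expect, the main obstacle. The good pair hypothesis (Definition \ref{dfnGoodPair}) provides, for each $\tau$, a sequence of roots $\alpha_i=e_{a_i}-e_{b_i}$ with $a_i,b_i$ in a single orbit of $w_\tau w_{\sat,\tau}^{-1}$, such that $w_\tau=s_{\alpha_r}\cdots s_{\alpha_1}w_{\sat,\tau}$ and $s_{\alpha_i}\cdots s_{\alpha_1}w_{\sat,\tau}\succ s_{\alpha_{i-1}}\cdots s_{\alpha_1}w_{\sat,\tau}$ for all $i$; the chain $w_{\sat,\tau}\dotaction\nu_v,\ s_{\alpha_1}\dotaction(w_{\sat,\tau}\dotaction\nu_v),\dots,\ w_\tau\dotaction\nu_v$ should then realize the strong linkage, the equivalence $\ell(s_\alpha u)>\ell(u)\iff u^{-1}(\alpha)>0$ together with the cancellation of the $\underline{\eta}'$-contribution along the roots $\alpha_i$ (forced by $a_i,b_i$ lying in one $w_\tau w_{\sat,\tau}^{-1}$-orbit) turning the Bruhat-ascending condition into the condition that each step of the chain strictly decreases for $\leq$. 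The delicate part is precisely this dictionary between the combinatorial ``good pair'' data and an honest strong-linkage chain for the $\iota_v$-twisted characters; for a pair that is not good, such as the one in Example \ref{exGoodPairs}, no such chain of reflections exists, so the method cannot produce $\jmathauto(y)$ in this way.
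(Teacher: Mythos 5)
Your overall strategy matches the paper's: realize $\jmathauto$ as a closed embedding into $\Xfrakr\times_L\Tcal^n_L$, reduce to the set-theoretic inclusion of the image in $\Xtri$, reduce to a pointwise assertion, pass to the patched eigenvariety $\Xrho$ using Hypothesis~\ref{hypComesFromPatched} and the choice of $U_{x_\sat}$, apply Theorem~\ref{thmStronglinkagePatched}, and exploit the good-pair hypothesis to produce a chain of reflections realizing the strong linkage. You also correctly identify the crucial observation that the good-pair condition forces cancellation of the non-algebraic part of $\underline{\delta}'$ (your $\underline{\eta}'$, the paper's $\underline{\chi}$) along the chain, which the paper verifies via the weight equations cutting out $\Wcal^n_{w,w_\sat,\hbold,L}$ together with a descending recursion on $k$. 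Two remarks. First, your reduction to ``classical'' points is unnecessary: the Zariski closure of $U_{x_\sat}\times_{\Wcal^n_L}\Wcal^n_{w,w_\sat,\hbold,L}$ is the closure of all of its points, and the paper simply treats all points $x'\in U_{x_\sat}$ with $\underline{\delta}'|_{(\intring_K^\times)^n}\in\Wcal^n_{w,w_\sat,\hbold,L}$, so be careful not to smuggle in a density assumption for a special class of points.

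Second, and more seriously, you leave the heart of the argument essentially unproved and your stated direction of the strong-linkage chain disagrees with the paper's explicit computation. You claim the chain $w_{\sat,\tau}\dotaction\nu_v,\,s_{\alpha_1}\dotaction(w_{\sat,\tau}\dotaction\nu_v),\dots,w_\tau\dotaction\nu_v$ strictly \emph{decreases} for $\leq$. The paper computes
\[
	w_k\dotaction\lambda - w_{k-1}\dotaction\lambda = \bigl(h_{\tau,w_{k,\tau}^{-1}(i)}-h_{\tau,w_{k-1,\tau}^{-1}(i)}\bigr)_{\tau\in\Sigma,1\leq i\leq n}
\]
(the $\underline{\chi}$-contributions cancelling because the weight equations hold stepwise along a good-pair chain), and for $w_k=s_\alpha w_{k-1}\succ w_{k-1}$ with $\alpha=e_a-e_b$ a positive root this difference is $(h_{\tau,d}-h_{\tau,c})(e_a-e_b)$ with $c=w_{k-1,\tau}^{-1}(a)<d=w_{k-1,\tau}^{-1}(b)$, hence a \emph{positive} multiple of a positive root: the chain strictly \emph{increases} from $w_\sat\dotaction\lambda$ to $w\dotaction\lambda$, yielding $w_\sat\dotaction\lambda\uparrow w\dotaction\lambda$, i.e. the $\underline{\epsilon}'$-side is the bottom of the linkage. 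You should therefore reexamine which direction of $\uparrow$ you actually need (note that the direction of the $\iota_v$-twist matters here) and then carry out the monotonicity check concretely rather than asserting it, because this sign-chase is exactly where the whole argument can quietly break.
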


\begin{proof}
It is enough to prove that any point $x'=(r',\underline{\delta}')\in U_{x_\sat}$ such that $\underline{\delta}'|_{(\intring_K^\times)n}\in\Wcal^n_{w,w_\sat,\hbold,L}$ satisfies $\jmathauto(x')\in\Xtri$. By definition, the open subset $\prod_{v\in S_p}U_{x_{\sat,v}}$ of $\Xtrip$ is in the image of the morphism
\begin{equation} \label{eqPatchedToTri2}
	\Xrho \inj \Xfraknotp\times_L\Xtrip\times_L\times\mathbb{U}^g \surj \Xtrip
\end{equation}
where the first arrow is \eqref{eqPatchedToXtri}.
Let $y'=(s',\underline{\epsilon}')\in\Xfrak_\infty\times\widehat{T}_{p,L}$ be a point of $\Xrho$ whose image by \eqref{eqPatchedToTri2} is $(x'_v)_{v\in S_p}$, where $x'_v\in U_{x_{\sat,v}}$ corresponds to $x'\in U_{x_\sat}$ under $U_{x_{\sat,v}}\iso U_{x_\sat}$. We know in particular that $\underline{\epsilon}'=(\iota_v(\underline{\delta}'_v))_{v\in S_p}$, where $\underline{\delta}'_v$ is the composition of $(F_{\tilde{v}}^\times)^n\isoto(K^\times)^n$ with $\underline{\delta}'$. Consider the point
\[
	\jmathauto(y') \coloneqq (s',\jmathauto(\underline{\epsilon}')) \in\Xfrak_\infty\times\widehat{T}_{p,L}
\]
where $\jmathauto(\underline{\epsilon}')\coloneqq(\jmathauto(\iota_v(\underline{\delta}_v')))_{v\in S_p}$. It is enough to prove that $\jmathauto(y')\in\Xrho$, since $(\jmathauto(x'_v))_{v\in S_p}$ would then be the image of $\jmathauto(y')$ under \eqref{eqPatchedToTri2} (as $\jmathauto$ and $\iota_v$ commute), hence a point of $\Xtrip=\prod_{v\in S_p}\Xtriv$.
 
 By Theorem \ref{thmStronglinkagePatched}, all we need to prove is that $\underline{\epsilon}'\uparrow\jmathauto(\underline{\epsilon}')$ in the sense of Definition \ref{dfnStrongLinkage}. Since $\underline{\epsilon}'\jmathauto(\underline{\epsilon}')^{-1}=z^{w_\sat(\hbold)-w(\hbold)}$ is algebraic by definition of $\jmathauto$, it is equivalent to check that
\begin{equation} \label{eqUpArrowEpsilon}
	(\wt_\tau(\epsilon_{v,i}))_{\tau\in\Sigma_v,1\leq i\leq n} \uparrow \left(h_{\tau,w_v^{-1}(i)}-h_{\tau,w_{\sat,v}^{-1}}(i)+\wt_\tau(\epsilon_{v,i})\right)_{\tau\in\Sigma_v,1\leq i\leq n}
\end{equation}
as elements of $\mathfrak{t}_{v,L}^*$, for all $v\in S_p$. Viewing $\underline{\delta}'$ as its restriction to $(\intring_K^\times)^n$, we can write:
\[
	\underline{\delta}'=(z^{\hbold_{w_\sat^{-1}(1)}}\chi_1,\ldots,z^{\hbold_{w_\sat^{-1}(n)}}\chi_n)
\]
where $\underline{\chi}\in\Wcal_L^n$ satisfies $\wt_\tau(\chi_{w_\tau w_{\sat,\tau}^{-1}(i)})=\wt_\tau(\chi_i)$ for all $1\leq i\leq n$ and $\tau\in\Sigma$. Unpacking the definitions and using the fact that $\chi\circ\theta_K^{-1}=N_{K/\Q_p}\abs{N_{K/\Q_p}}_{\Q_p}$ has the same restriction to $\intring_K^\times$ as $N_{K/\Q_p}=z^{\mathbf{1}}$, we see that \eqref{eqUpArrowEpsilon} can be reformulated as 
\[
	\left(h_{\tau,w_{\sat,\tau}^{-1}(i)}+i-1+\wt_\tau(\chi_i)\right)_{\tau\in\Sigma,1\leq i\leq n} \uparrow \left(h_{\tau,w_\tau^{-1}(i)}+i-1+\wt_\tau(\chi_i)\right)_{\tau\in\Sigma,1\leq i\leq n} \,.
\]
Since $\wt_\tau(\chi_{w_\tau w_{\sat,\tau}^{-1}(i)})=\wt_\tau(\chi_i)$ for all $i$, this equation can also be written
\begin{equation} \label{eqUpArrowDot}
	w_\sat\cdot\lambda \uparrow w\cdot\lambda \quad\text{with}\quad \lambda\coloneqq(h_{\tau,i}+i-1+\wt_\tau(\chi_{w_\tau(i)}))_{\tau\in\Sigma,1\leq i\leq n}
\end{equation}
where $\cdot$ is the dot action of Definition \ref{dfnDotAction}.

Since $(w_\sat,w)$ is a good pair in $(\Scal_n)^\Sigma$, there exists a chain $w_0\coloneqq w_\sat\prec w_1\prec\ldots\prec w_r\coloneqq w$ in $(\Scal_n)^\Sigma$ for some $r\in\N$ such that for each $1\leq k\leq r$ and $\tau\in\Sigma$, the permutation $w_{k,\tau} w_{k-1,\tau}^{-1}\in\Scal_n$ is either the identity or a reflection which is supported in some orbit of $w_\tau w_{\sat,\tau}^{-1}$. By the condition on the weights of $\underline{\chi}$, this means in particular that $\wt_\tau(\chi_{w_{k,\tau}w_{k-1,\tau}^{-1}(i)})=\wt_\tau(\chi_i)$ for each $\tau,k,i$. Thus, by a descending recursion on $k$, we see that
\[
	\wt_\tau\left(\chi_{w_\tau w_{k,\tau}^{-1}(i)}\right)=\wt_\tau\left(\chi_i\right)=\wt_\tau\left(\chi_{w_\tau w_{k-1,\tau}^{-1}(i)}\right)
\]
for each $\tau,k,i$. We then get, for all $1\leq k\leq n$,
\[
	w_k\cdot\lambda - w_{k-1}\cdot\lambda = \left(h_{\tau,w_{k,\tau}^{-1}(i)}-h_{\tau,w_{k-1,\tau}^{-1}(i)}\right)_{\tau\in\Sigma,1\leq i\leq n}
\]
thus $w_k\cdot\lambda>w_{k-1}\cdot\lambda$ (remember that $>$ is the partial order on $\tfrak_L^*$ induced by the lattice of simple roots) since for each $\tau\in\Sigma$, on the one hand $w_{k-1,\tau}^{-1}\prec w_{k,\tau}^{-1}$ and $w_{k-1,\tau}w_{k,\tau}^{-1}$ is at most a reflection, and on the other hand $(h_{\tau,i})_{1\leq i\leq n}$ is strictly increasing. This means that $w_{k-1}\cdot\lambda\uparrow w_k\cdot\lambda$ for each $1\leq k\leq n$, hence by chaining these together $w_\sat\cdot\lambda\uparrow w\cdot\lambda$. This is exactly \eqref{eqUpArrowDot}.
\end{proof}

Note that $\jmathauto(x_\sat)=x$, therefore $\jmathauto$ induces an injective morphism $T_{\overline{U_{x_\sat}\times_{\Wcal^n_L}\Wcal^n_{w,w_\sat,\hbold,L}},x_\sat}\inj T_{\Xtri,x}$.

\subsection{Proof of Theorem \ref{thmMain}}

We now prove Theorem \ref{thmMain}. It is equivalent to proving, assuming Hypothesis \ref{hypComesFromPatched} and that $(w_\sat,w)$ is a good pair in $(\Scal_n)^\Sigma$ in the sense of Definition \ref{dfnGoodPair}, that
\[
	\dim_{k(x)}T_{\Xtri,x} = \dim\Xtri - d_{ww_\sat^{-1}} + \sum_{\tau\in\Sigma}\dim_{k(x)}T_{\overline{(Bw_\tau B/B)}_{k(x)}^\rig,x_\tau} - \length(w_\sat)
\]
where $(x_\tau)_{\tau\in\Sigma}\coloneqq h(r,\underline{\varphi})\in\prod_{\tau\in\Sigma}(Bw_{\sat,\tau}B/B)_L^\rig$ (see \eqref{eqDfnh} for the definition of $h$). Indeed, the dimension of $T_{\overline{Bw_\tau B/B},y_\tau B}$ is constant as $y_\tau$ varies through $Bw_{\sat,\tau}B/B$.

The inequality $(\text{left-hand side})\leq(\text{right-hand side})$ is the statement of \cite[Theorem 4.1.5(ii)]{bhs3} (where our $w_\sat$ is noted $w_x$). We prove the inequality in the other direction. Consider the following commutative diagram:
\begin{equation} \label{eqCommutativeDiagramWcrtilde}
\begin{tikzcd}
	\widetilde{W}_{x_\sat} \arrow[r,hook,"\iota_{\hbold,w_\sat}"] \arrow[d,hook,"\subseteq"] & \overline{U_{x_\sat}\times_{\Wcal^n_L}\Wcal^n_{w,w_\sat,\hbold,L}} \arrow[d,hook,"\jmathauto"] \\
	\Wcrtildew{w} \arrow[r,hook,"\iota_{\hbold,w}"] & \Xtri
\end{tikzcd}
\end{equation}
where $\iota_{\hbold,w}$ and $\iota_{\hbold,w_\sat}$ are given in Proposition \ref{propIotahw}, $\jmathauto$ is given in Proposition \ref{propClosedEmbedding} and $\widetilde{W}_{x_\sat}\coloneqq\iota_{\hbold,w_\sat}^{-1}\left(\overline{U_{x_\sat}\times_{\Wcal^n_L}\Wcal^n_{w,w_\sat,\hbold,L}}\right)$ is an analytic subset of $\Wcrtildew{w_\sat}$, hence an analytic subset of $\Wcrtildew{w}$. From the definitions, we see that $\iota_{\hbold,w_\sat}\colon\Wcrtildew{w_\sat}\to\Xtri$ factors through $\Xtri\times_{\Wcal^n_L}\Wcal^n_{w,w_\sat,\hbold,L}\to\Xtri$. Since $\iota_{\hbold,w_\sat}(r,\underline{\varphi})=x_\sat\in U_{x_\sat}\times_{\Wcal^n_L}\Wcal^n_{w,w_\sat,\hbold,L}$ and $U_{x_\sat}\times_{\Wcal^n_L}\Wcal^n_{w,w_\sat,\hbold,L}$ is a Zariski-open subset of $\Xtri\times_{\Wcal^n_L}\Wcal^n_{w,w_\sat,\hbold,L}$, it follows that $T_{\widetilde{W}_{x_\sat},(r,\underline{\varphi})}=T_{\Wcrtildew{w_\sat},(r,\underline{\varphi})}$. Therefore, \eqref{eqCommutativeDiagramWcrtilde} induces a commutative diagram of tangent spaces
\begin{equation} \label{eqTangentSpacesCartesianSquare}
\begin{tikzcd}
	T_{\Wcrtildew{w_\sat},(r,\underline{\varphi})} \arrow[r,hook] \arrow[d,hook] & T_{\overline{U_{x_\sat}\times_{\Wcal^n_L}\Wcal^n_{w,w_\sat,\hbold,L}},x_\sat} \arrow[d,hook] \\
	T_{\Wcrtildew{w},(r,\underline{\varphi})} \arrow[r,hook] & T_{\Xtri,x} \,.
\end{tikzcd}
\end{equation}
Let $T_1,T_2\subseteq T_{\Xtri,x}$ be the respective images of the bottom and right arrows of \eqref{eqTangentSpacesCartesianSquare}. Then this diagram induces a chain of morphisms
\begin{equation} \label{eqCartesianSquare}
	T_{\Wcrtildew{w_\sat},(r,\underline{\varphi})} \to T_{\Wcrtildew{w},(r,\underline{\varphi})} \times_{T_{\Xtri,x}} T_{\overline{U_{x_\sat}\times_{\Wcal^n_L}\Wcal^n_{w,w_\sat,\hbold,L}},x_\sat} \isoto T_1\inter T_2
\end{equation}
of vector spaces.

The first morphism in \eqref{eqCartesianSquare} is injective since $T_{\Wcrtildew{w_\sat},(r,\underline{\varphi})}\to T_{\Wcrtildew{w},(r,\underline{\varphi})}$ is injective; we then wish to prove its surjectivity. Let $A=k(x)[\varepsilon]/(\varepsilon^2)$ be the algebra of infinitesimal numbers over $k(x)$. An element $v_\sat\in T_{U_{x_\sat},x_\sat}$ is a pair $v_\sat=(r_A,\underline{\delta}_{\sat,A})$ where $r_A\colon\G_K\to\GL_n(A)$ is a representation which reduces to $r$ modulo $(\varepsilon)$ and $\underline{\delta}_{\sat,A}$ is a character $(K^\times)^n\to A^\times$ which reduces to $\underline{\delta}_\sat$ modulo $(\varepsilon)$. We know from \cite[Corollary 6.3.10]{kpx} that the triangulation on $r$ globalises in a neighbourhood of $x_\sat$ in $\Utri$, hence $r_A$ is triangular with parameter $\underline{\delta}_{\sat,A}$; see the argument in the proof of \cite[Proposition 5.16]{bhs2}. Therefore, an element $v$ in the right-hand side of \eqref{eqCartesianSquare} is a triple $v=(r_A,\underline{\varphi}_A,\underline{\delta}_A)$ where $r_A\colon\G_K\to\GL_n(A)$ is a crystalline representation of Hodge-Tate weights $\hbold$ which reduces to $r$ modulo $(\varepsilon)$, $\underline{\varphi}_A\in (A^\times)^n$ is the ordering of the eigenvalues of $r_A$ which reduces to $\underline{\varphi}$ modulo $(\varepsilon)$, and $\underline{\delta}_A$ is the parameter of a triangulation of $r_A$, such that
\[
	z^{w(\hbold)}\nr(\underline{\varphi}_A) = \jmathauto(\underline{\delta}_A) = z^{w(\hbold)-w_\sat(\hbold)}\underline{\delta}_A \,.
\]
In particular, $\underline{\delta}=z^{w_\sat(\hbold)}\nr(\underline{\varphi})$; hence Proposition \ref{propLikeBellaicheChenevier}\ref{itemFlagTriangulation} implies that $(r_A,\underline{\varphi}_A)\in\Wcrtildew{w_\sat}$, which is to say that $v$ is in the image of \eqref{eqCartesianSquare}. This proves that \eqref{eqCartesianSquare} is a chain of isomorphisms of vector spaces.

Therefore,
\[
	\dim_{k(x)}(T_1\inter T_2) = \dim_{k(x)}T_{\Wcrtildew{w_\sat},(r,\underline{\varphi})}
\]
thus
\begin{multline} \label{eqDim1}
	\dim_{k(x)}T_{\Xtri,x} \geq \dim_{k(x)}(T_1+T_2) = \dim_{k(x)}T_{\overline{U_{x_\sat}\times_{\Wcal^n_L}\Wcal^n_{w,w_\sat,\hbold,L}},x_\sat}
	\\ + \dim_{k(x)}T_{\Wcrtildew{w},(r,\underline{\varphi})} - \dim_{k(x)}T_{\Wcrtildew{w_\sat},(r,\underline{\varphi})} \,.
\end{multline}
By Proposition \ref{propDimMinusD}, we have
\begin{equation} \label{eqDim2}
	\dim_{k(x)}T_{\overline{U_{x_\sat}\times_{\Wcal^n_L}\Wcal^n_{w,w_\sat,\hbold,L}},x_\sat}=\dim\Xtri - d_{ww_\sat^{-1}} \,.
\end{equation}
Also, the cartesian diagram
\[
\begin{tikzcd}
	\Wcrtildew{w_\sat} \arrow[r,"h"] \arrow[d,"\subseteq"] & \prod_{\tau\in\Sigma}\overline{(Bw_{\sat,\tau}B/B)}_L^\rig \arrow[d,"\subseteq"] \\
	\Wcrtildew{w} \arrow[r,"h"] & \prod_{\tau\in\Sigma}\overline{(Bw_\tau B/B)}_L^\rig
\end{tikzcd}
\]
and the smoothness of $h$ yield
\begin{multline} \label{eqDim3}
	\dim_{k(x)}T_{\Wcrtildew{w},(r,\underline{\varphi})} - \dim_{k(x)}T_{\Wcrtildew{w_\sat},(r,\underline{\varphi})} \\
	= \sum_{\tau\in\Sigma}\dim_{k(x)}T_{\overline{(Bw_\tau B/B)}_{k(x)}^\rig,x_\tau} - \sum_{\tau\in\Sigma}\dim_{k(x)}T_{\overline{(Bw_{\sat,\tau}B/B)}_{k(x)}^\rig,x_\tau} \,.
\end{multline}
Finally, as $(x_\tau)$ lies in the smooth cell $\prod_{\tau\in\Sigma}(Bw_{\sat,\tau}B/B)_L^\rig$, we have
\begin{equation} \label{eqDim4}
	\sum_{\tau\in\Sigma}\dim_{k(x)}T_{\overline{(Bw_{\sat,\tau}B/B)}_{k(x)}^\rig,x_\tau} = \dim\prod_{\tau\in\Sigma}(Bw_{\sat,\tau}B/B)_L^\rig = \length(w_\sat) \,.
\end{equation}
Putting together \eqref{eqDim1}, \eqref{eqDim2}, \eqref{eqDim3}, \eqref{eqDim4} gives the desired inequality, which finishes the proof.

\bibliographystyle{amsplain}
\bibliography{biblio_trianguline}

\providecommand{\bysame}{\leavevmode\hbox to3em{\hrulefill}\thinspace}
\providecommand{\MR}{\relax\ifhmode\unskip\space\fi MR }
\providecommand{\MRhref}[2]{%
  \href{http://www.ams.org/mathscinet-getitem?mr=#1}{#2}
}
\providecommand{\href}[2]{#2}
\begin{thebibliography}{10}

\bibitem{bellaicheChenevier}
Jo{\"e}l Bellaïche and Ga{\"e}tan Chenevier, \emph{Families of {G}alois
  representations and {S}elmer groups}, Ast{\'e}risque \textbf{324} (2009),
  1--314.

\bibitem{bergdall}
John Bergdall, \emph{Ordinary modular forms and companion points on the
  eigencurve}, Journal of Number Theory \textbf{134} (2014), 226--239.

\bibitem{berger02}
Laurent Berger, \emph{Repr{\'e}sentations $p$-adiques et {\'e}quations
  diff{\'e}rentielles}, Inventiones mathematicae \textbf{148} (2002), no.~2,
  219--284.

\bibitem{berger08}
\bysame, \emph{{\'E}quations diff{\'e}rentielles $p$-adiques et
  ($\varphi$,{$N$})-modules filtr{\'e}s}, Ast{\'e}risque \textbf{319} (2008),
  13--38.

\bibitem{bergerColmez}
Laurent Berger and Pierre Colmez, \emph{Familles de repr{\'e}sentations de de
  {R}ham et monodromie $p$-adique}, Ast{\'e}risque \textbf{319} (2008),
  303--337.

\bibitem{schubertBGG}
Iosif~N. Bernstein, Israel~M. Gel'fand, and Sergei~I. Gel'fand,
  \emph{{S}chubert cells and cohomology of the spaces {$G/P$}}, Russian
  Mathematical Surveys \textbf{28} (1973), no.~3, 1.

\bibitem{berthelot}
Pierre Berthelot, \emph{Cohomologie rigide et cohomologie rigide {\`a} supports
  propres, premi{\`e}re partie}, Pr{\'e}publication IRMAR \textbf{96-03}
  (1996).

\bibitem{bgr}
Siegfried Bosch, Ulrich G{\"u}ntzer, and Reinhold Remmert,
  \emph{Non-{A}rchimedean analysis. {A} systematic approach to rigid analytic
  geometry}, Grundlehren der mathematischen Wissenschaften, vol. 261,
  Springer-Verlag, 1984.

\bibitem{bourbakiLie456}
Nicolas Bourbaki, \emph{{L}ie groups and {L}ie algebras: {C}hapters 4-6},
  Springer-Verlag, 2002.

\bibitem{breuilEmerton}
Christophe Breuil and Matthew Emerton, \emph{Repr{\'e}sentations $p$-adiques
  ordinaires de {$\mathrm{GL}_2(\mathbb{Q}_p)$} et compatibilit{\'e}
  local-global}, Ast{\'e}risque \textbf{331} (2010), 255--315.

\bibitem{bhs2}
Christophe Breuil, Eugen Hellmann, and Benjamin Schraen, \emph{Smoothness and
  classicality on eigenvarieties}, Inventiones mathematicae \textbf{209}
  (2017), no.~1, 197--274.

\bibitem{bhs1}
\bysame, \emph{Une interpr{\'e}tation modulaire de la vari{\'e}t{\'e}
  trianguline}, Mathematische Annalen \textbf{367} (2017), no.~3-4, 1587--1645.

\bibitem{bhs3}
\bysame, \emph{A local model for the trianguline variety and applications},
  Publications math{\'e}matiques de l'IH{\'E}S \textbf{130} (2019), no.~1,
  299--412.

\bibitem{caraianiEtAl}
Ana Caraiani, Matthew Emerton, Toby Gee, David Geraghty, Vytautas
  Pa{\v{s}}k{\=u}nas, and Sug~Woo Shin, \emph{Patching and the $p$-adic local
  {L}anglands correspondence}, Cambridge Journal of Mathematics \textbf{4}
  (2016), no.~2, 197--287.

\bibitem{colmezTriangulines}
Pierre Colmez, \emph{Repr{\'e}sentations triangulines de dimension 2},
  Ast{\'e}risque \textbf{319} (2008), no.~213-258, 83.

\bibitem{colmezDospinescu}
Pierre Colmez and Gabriel Dospinescu, \emph{Compl{\'e}t{\'e}s universels de
  repr{\'e}sentations de $\mathrm{GL}_2(\mathbb{Q}_p)$}, Algebra \& Number
  Theory \textbf{8} (2014), no.~6, 1447--1519.

\bibitem{colmezDospinescuPaskunas}
Pierre Colmez, Gabriel Dospinescu, and Vytautas Pa{\v{s}}k{\=u}nas, \emph{The
  $p$-adic local {L}anglands correspondence for $\mathrm{GL}_2(\mathbb{Q}_p)$},
  Cambridge Journal of Mathematics \textbf{2} (2014), no.~1, 1--47.

\bibitem{dejong}
Arthur~J. de~Jong, \emph{Crystalline {D}ieudonn{\'e} module theory via formal
  and rigid geometry}, Publications Math{\'e}matiques de l'IH{\'E}S \textbf{82}
  (1995), 5--96.

\bibitem{emerton06}
Matthew Emerton, \emph{{J}acquet modules of locally analytic representations of
  $p$-adic reductive groups {I}. {C}onstruction and first properties}, Annales
  scientifiques de l’Ecole normale sup{\'e}rieure \textbf{39} (2006), no.~5,
  775--839.

\bibitem{emertonGee14}
Matthew Emerton and Toby Gee, \emph{A geometric perspective on the
  {B}reuil--{M}{\'e}zard conjecture}, Journal of the Institute of Mathematics
  of Jussieu \textbf{13} (2014), no.~1, 183--223.

\bibitem{emertonGee19}
\bysame, \emph{Moduli stacks of {\'e}tale ($\varphi$,{$\Gamma$})-modules and
  the existence of crystalline lifts}, Annals of Mathematics Studies, vol. 215,
  Princeton University Press, 2023.

\bibitem{hellmann}
Eugen Hellmann, \emph{Families of trianguline representations and finite slope
  spaces},  (2012), arXiv:1202.4408.

\bibitem{hellmannSchraen}
Eugen Hellmann and Benjamin Schraen, \emph{Density of potentially crystalline
  representations of fixed weight}, Compositio Mathematica \textbf{152} (2016),
  no.~8, 1609--1647.

\bibitem{BGG}
James~E. Humphreys, \emph{Representations of semisimple {L}ie algebras in the
  {BGG} category $\mathcal{O}$}, Graduate Studies in Mathematics, vol.~94,
  American Mathematical Society, 2008.

\bibitem{jantzen}
Jens~C. Jantzen, \emph{Representations of algebraic groups, {S}econd edition},
  Mathematical Surveys and Monographs, vol. 107, American Mathematical Society,
  2003.

\bibitem{kpx}
Kiran Kedlaya, Jonathan Pottharst, and Liang Xiao, \emph{Cohomology of
  arithmetic families of ($\varphi$,{$\Gamma$})-modules}, Journal of the
  American Mathematical Society \textbf{27} (2014), no.~4, 1043--1115.

\bibitem{kisin03}
Mark Kisin, \emph{Overconvergent modular forms and the {F}ontaine-{M}azur
  conjecture}, Inventiones mathematicae \textbf{153} (2003), no.~2, 373--454.

\bibitem{kisin08}
\bysame, \emph{Potentially semi-stable deformation rings}, Journal of the
  American Mathematical Society \textbf{21} (2008), no.~2, 513--546.

\bibitem{kisin09}
\bysame, \emph{Moduli of finite flat group schemes, and modularity}, Annals of
  Mathematics \textbf{170} (2009), no.~3, 1085--1180.

\bibitem{mazur}
Barry Mazur, \emph{Deforming {G}alois representations}, {G}alois groups over
  $\Q$, Mathematical Sciences Research Institute Publications, vol.~16,
  Springer-Verlag, 1989, pp.~385--437.

\bibitem{milne}
James~S. Milne, \emph{Algebraic groups: the theory of group schemes of finite
  type over a field}, Cambridge Studies in Advanced Mathematics, vol. 170,
  Cambridge University Press, 2017.

\bibitem{chap245}
Seginus Mowlavi, \emph{The trianguline variety, tangent spaces and the
  {G}rothendieck-{S}pringer resolution},  (2023), preprint.

\bibitem{neukirch}
J{\"u}rgen Neukirch, \emph{Algebraic number theory}, Grundlehren der
  mathematische Wissenschaften, vol. 322, Springer-Verlag, 1999.

\bibitem{thorne}
Jack Thorne, \emph{On the automorphy of $\ell$-adic {G}alois representations
  with small residual image}, Journal of the Institute of Mathematics of
  Jussieu \textbf{11} (2012), no.~4, 855--920.

\end{thebibliography}

\end{document}